\documentclass[12pt,oneside,reqno]{amsart}
\usepackage{soul}
\usepackage{amsmath}
\usepackage{amssymb}
\usepackage{color}
\usepackage[dvipsnames,x11names,svgnames]{xcolor}
\usepackage[colorlinks=true,linkcolor=NavyBlue,citecolor=DarkGreen, urlcolor=blue,pagebackref]{hyperref}
\usepackage{amsthm}
\usepackage{amscd}
\usepackage{geometry}
\usepackage{mathrsfs}
\usepackage{dsfont}
\usepackage{mathtools}
\usepackage{verbatim}
\usepackage{realhats}
\usepackage{tikz-cd}
\usepackage[sort,numbers]{natbib}

\newcommand{\kommentar}[1]{}
\newcommand{\mf}{\mathfrak}

\newcommand{\mc}{\mathcal}

\newcommand{\sumstar}{\sideset{}{^*}\sum}


\newcommand{\cM}{\mc{M}}
\newcommand{\cD}{\mc{D}}

\newcommand{\cZ}{\mc{Z}}
\newcommand{\cL}{\mc{L}}
\newcommand{\cR}{\mc{R}}
\newcommand{\cA}{\mc{A}}
\newcommand{\cB}{\mc{B}}
\newcommand{\scrM}{\mathscr{M}}

\newcommand{\scrI}{\mathscr{I}}
\newcommand{\fa}{\mf{a}}
\newcommand{\fb}{\mf{b}}
\newcommand{\fp}{\mf{p}}
\newcommand{\fP}{\mf{P}}

\newcommand{\1}{\mathds{1}}

\DeclareMathOperator{\tr}{Tr}
\DeclareMathOperator{\res}{res}

\renewcommand{\pmod}[1]{\hspace{0.05em}(\mathrm{mod}\,#1)}

\newcommand{\R}{\mathbb{R}}
\newcommand{\Z}{\mathbb{Z}}

\newcommand{\N}{\mathbb{N}}
\newcommand{\C}{\mathbb{C}}

\newcommand{\F}{\mathbb{F}}
\newcommand{\Fq}{\mathbb{F}_q}
\newcommand{\FqT}{\Fq[T]}
\newcommand{\BA}{\mathbb{A}}
\newcommand{\BB}{\mathbb{B}}

\newcommand{\ov}{\overline}

\DeclareMathOperator{\sgn}{sgn}
\DeclareMathOperator{\Kl}{Kl}

\newcommand{\geqs}{\geqslant}
\newcommand{\leqs}{\leqslant}

\newtheorem{theorem}{Theorem}[section]
\newtheorem{lemma}[theorem]{Lemma}
\newtheorem{corollary}[theorem]{Corollary}

\newtheorem{proposition}[theorem]{Proposition}

\theoremstyle{definition}

\theoremstyle{remark}
\newtheorem{remark}[theorem]{Remark}

\newcommand{\mcom}[1]{{\color{red}{Matilde: #1}} }
\newcommand{\acom}[1]{{\color{blue}{Alexandra: #1}} }

\usepackage{marginnote}

\reversemarginpar

\newcommand{\copsum}{\,\sideset{}{^*}\sum}

\begin{document}
 
\title{The shifted convolution problem in function fields}

\author{Alexandra Florea}
\address{Department of Mathematics, University of California Irvine, 340 Rowland Hall, Irvine CA 92697, USA}
\email{\href{mailto:floreaa@uci.edu}{floreaa@uci.edu}}
\thanks{AF was partially supported by NSF grant DMS-2101769 and NSF CAREER grant DMS-2339274.}
\author{Matilde Lal\'in}
\address{D\'epartement de math\'ematiques et de statistique, Universit\'e de Montr\'eal, CP 6128, succ. Centre-ville, Montreal, QC H3C 3J7, Canada}\email{\href{mailto:matilde.lalin@umontreal.ca}{matilde.lalin@umontreal.ca}}
\thanks{ML was partially supported by the Natural Sciences and Engineering Research Council of Canada, RGPIN-2022-03651, and the Fonds de recherche du Qu\'ebec - Nature et technologies, Projet de recherche en \'equipe 300951}

\author{Amita Malik}
\address{Department of Mathematics, The Pennsylvania State University, McAllister Building, University Park, PA 16802, USA}
\email{\href{mailto:amita.malik@psu.edu}{amita.malik@psu.edu}}
\thanks{AM was partially supported by the Simons Foundation Grant TSM-00002309.}
\author{Anurag Sahay}
\address{Department of Mathematics, Purdue University, 150 N. University Street, West Lafayette, IN 47907, USA}
\email{\href{mailto:anuragsahay@purdue.edu}{anuragsahay@purdue.edu}}
\thanks{AS was partially supported by the AMS-Simons Travel Grant and Trevor Wooley's startup funding at Purdue University.}

\begin{abstract} We study the shifted convolution problem for the divisor function in function fields in the large degree limit, that is, the average value of $d(f) d(f+h)$ where $f$ runs over monic polynomials in $\FqT$ of a given degree, and $h$ is a given monic polynomial. We prove an asymptotic formula in the range $\deg(h) < (2-\epsilon)\deg(f)$. We also consider mixed correlations and self-correlations of $r_\chi = 1 \star \chi$, the convolution of $1$ with a Dirichlet character mod $\ell$, where $\ell$ is a monic irreducible polynomial, proving asymptotic formulae in various ranges. This includes the case of quadratic characters, which yields results about correlations of norm-counting functions of quadratic extensions of $\FqT$. A novel feature of our work is a Voronoi summation formula (equivalently, a functional equation for the Estermann function) in $\FqT$ which was not previously available.

\end{abstract}
\maketitle

\section{Introduction}

The shifted convolution of divisor functions
\begin{equation}\label{SCPinZ} \frac{1}{X}\sum_{n\leqs X} d(n) d(n+h), \end{equation}
with $h \in \N$ is a classical problem in the analytic theory of the integers, initially considered by Ingham \cite{ingham1927div} who proved an asymptotic formula for \eqref{SCPinZ} with log-savings. Soon thereafter, Estermann \cite{estermann} established a power saving asymptotic for \eqref{SCPinZ} by discovering and exploiting a connection to Kloosterman sums. Bounds on Kloosterman sums have since played a crucial role in this and related problems, as we shall see below. 

There has been sustained interest in an asymptotic formula for questions like \eqref{SCPinZ} with uniformity in the shift $h$ due to applications to moment problems, a connection due to Atkinson \cite{atkinson}. In this case, \eqref{SCPinZ} is connected to understanding the fourth moment of the zeta function,
\[ \int_T^{2T} |\zeta(\tfrac12+it)|^4 dt. \]
Indeed, Heath-Brown's breakthrough work \cite{heathbrownfourth} establishing a power-saving asymptotic for the fourth moment uses as an input the asymptotic (obtained via an application of Weil's bound for Kloosterman sums)
\[ \eqref{SCPinZ} = P_2(\log X;h) + O(X^{-1/6+\epsilon}) \text{ uniformly for } h \leqs X^{5/6},\]
where $P_2(t;h)$ is a polynomial of degree $2$ whose coefficients depend on $h$. 

Later, Deshouillers--Iwaniec \cite{deshouillersiwaniec-main, deshouillersiwaniec-div} introduced powerful methods from the spectral theory of automorphic forms to utilize cancellation between the Kloosterman sum to different moduli, thereby improving on the point-wise use of Weil's bound.

There are many subsequent works around this circle of ideas; the reader may consult \cite{motohashidiv,topad3,topadk} for a more comprehensive account of the extensive literature. We content ourselves with highlighting the work of Meurman \cite{meurman} who proved an asymptotic formula for \eqref{SCPinZ} in the widest known range $h \ll X^{2 - \epsilon}$. 

In this article, we investigate the analogue of \eqref{SCPinZ} for the function field $\FqT$, i.e.,
\begin{equation} \label{SCPinFF} \frac{1}{q^n} \sum_{f \in \cM_n} d(f) d(f+h), \end{equation}
where here and throughout $q$ is a fixed prime power, $h \in \cM$ is a monic polynomial in $\FqT$, $d(f)$ counts the monic divisors of $f$, and the sum $f \in \cM_n$ runs over the set of monic polynomials of degree $n$. 

The literature on \eqref{SCPinFF} is much sparser than \eqref{SCPinZ} but there are still some antecedents to our work. One is interested in evaluating \eqref{SCPinFF} when $q^n \to \infty$. Unlike in $\Z$, there are two natural regimes one can consider.  
 For example, we may fix $n$ and let $q \to \infty$. Andrade--Bary-Soroker--Rudnick \cite{ABSR} considered this problem (among others) and demonstrated that, provided $\deg(h) < n$, 
\begin{equation} \label{eq:ABSR} \bigg(\frac{1}{q^n} \sum_{f \in \cM_n} d(f)d(f+h)\bigg)\Bigg/ \bigg(\frac{1}{q^n} \sum_{f \in \cM_n} d(f) \bigg)^2 \longrightarrow 1 \text{ as } q \longrightarrow \infty. \end{equation}
For a fixed $q$ (or indeed, for $\Z$), the analogous quantity would instead depend on the prime factorization of $h$, demonstrating some local dependence between the multiplicative structures of $f$ and $f+h$. Thus, \eqref{eq:ABSR} may be interpreted as saying that the additive arithmetic structure disappears in the large field limit.

The complementary regime of $q$ fixed and $n \to \infty$ is more reminiscent of the integer setting and is the subject of our investigations. In this case, several independent authors (Conrey--Florea in unpublished work, Gorodetsky in his PhD thesis \cite[Lemma 3.3]{gorothesis}, Woo in her undergraduate thesis \cite{woothesis}, and Yiasemides in \cite{yiasemides}) established an exact formula for \eqref{SCPinFF} provided that $\deg(h) < n$. Ultimately, this relies on the fact that (the function field analogue of) the Estermann function is a polynomial. In particular, its functional equation is not used.

Our goal is to extend the range of $\deg(h)$ in which an asymptotic formula for \eqref{SCPinFF} is possible (eschewing, as will be necessary, the exact formula). Our main result is the following.
\begin{theorem}
    \label{dd}
    Suppose that $h$ is a monic polynomial with $\deg(h)=m$ and let $d =\max\{m,n\}$. Then
\begin{multline*}
        \sum_{f \in \mathcal{M}_n} d(f) d(f+h)  = q^n \sum_{\substack{g|h \\ \deg(g) \leqs [n/2]}} \frac{1}{|g|} \Big[ 4 \deg(g)^2 \Big(1-\frac{1}{q} \Big)\\
       -2\deg(g) \Big(2+\frac{2}{q}+ \Big( 1-\frac{1}{q}\Big)(n+d)\Big)+\Big[  2n+2 +(d-1) \Big( n \Big( 1-\frac{1}{q}\Big)+1+\frac{1}{q}\Big) \Big]\Big] + E,
    \end{multline*}
  where 
  $E=0$ if $m \leqs n+1$ or $m=n+2$ and $n$ is odd; otherwise
  \[ E \ll q^{m/2+\epsilon m}.\] 
\end{theorem}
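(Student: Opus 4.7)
The strategy is to combine the function-field Dirichlet hyperbola method with the Voronoi summation formula (equivalently, the functional equation for the Estermann function) for $\FqT$ developed earlier in the paper. I first open one of the divisor functions via
\[
d(f) = 2\,\#\{g \mid f : \deg g < n/2\} + \1_{2 \mid n}\,\#\{g \mid f : \deg g = n/2\},
\]
which after swapping sums reduces the theorem to evaluating
\[
T_g(h) := \sum_{k \in \cM_{n-\deg g}} d(gk+h)
\]
for each monic $g$ with $\deg g \leqs [n/2]$, and then summing the result over $g$ with the hyperbola weights.

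For each fixed $g$, I further open $d(gk+h) = \sum_{c \mid gk+h} 1$ and split the $c$-sum according to whether $g \mid c$ or $g \nmid c$. The first part is nontrivial only when $g \mid h$, since $g \mid gk$ forces $g \mid c \Rightarrow g \mid h$; there the inner sum in $k$ collapses via the standard closed forms $\sum_{k \in \cM_N} 1 = q^N$ and $\sum_{k \in \cM_N} d(k) = (N+1)q^N$. Combining the contributions over $g \mid h$, including the factor $2$ from the hyperbola step and the boundary correction at $\deg g = n/2$ when $n$ is even, reproduces exactly the explicit polynomial in $\deg g$, $n$ and $d$ stated in the main term.

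The off-diagonal contribution (from the divisors $c$ of $gk+h$ with $g \nmid c$) is the crux. Using orthogonality of additive characters modulo $c$,
\[
\1(c \mid gk+h) = \frac{1}{|c|}\sum_{\alpha \pmod{c}} e_c\bigl(\alpha(gk+h)\bigr),
\]
one is led naturally to an additive twist of the Estermann series $\cD(u; \alpha/c) := \sum_{f \in \cM} d(f)\, e_c(\alpha f)\, u^{\deg f}$. The Voronoi/functional-equation identity proved earlier in the paper transforms the truncated twisted series into a dual sum of controlled length involving function-field Kloosterman sums. Applying the Weil bound pointwise to the Kloosterman sums, then summing over the remaining moduli (losing a divisor-type factor $q^{\epsilon m}$), produces the desired estimate $E \ll q^{m/2+\epsilon m}$.

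The vanishing $E = 0$ for $m \leqs n+1$, and for $m = n+2$ with $n$ odd, reflects the polynomial nature of the Estermann series in $u$, with degree controlled in terms of $\max\{m, n\}$: in these ranges the truncated dual sum is empty and one simply recovers the exact formula of Conrey--Florea, Gorodetsky, Woo, and Yiasemides. The main obstacle I anticipate is executing the Voronoi/Kloosterman estimate uniformly in the conductor $c$ and in $g$ for $m > n+2$, together with isolating the precise parity cancellation that handles the borderline case $m = n+2$, $n$ odd; by contrast, the algebraic bookkeeping for the diagonal contribution should be largely mechanical.
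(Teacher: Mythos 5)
There is a genuine gap at the crux of your argument: you claim that applying the \emph{pointwise} Weil bound \eqref{weil} to the Kloosterman sums arising after Voronoi summation yields $E \ll q^{m/2+\epsilon m}$. It does not. After the functional-equation step the dual sum has length roughly $2\deg(g)-n$, and the Weil bound contributes $q^{\deg(g)/2}$ per modulus; summing trivially over the $\approx q^{j}$ moduli $g$ of degree $j \leqs d/2$ gives $q^{3j/2}$, and the total error is then $O(q^{3d/4+\epsilon d})$, not $O(q^{d/2+\epsilon d})$. With that bound the asymptotic formula only survives for $\deg(h) < \tfrac{4n}{3}(1-\epsilon)$, which is exactly the weaker range of the character-twisted Theorems~\ref{th3}--\ref{th5}, where only Weil is available. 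To reach the stated $E \ll q^{m/2+\epsilon m}$ (and hence the full range $\deg(h) < (2-\epsilon)n$) one must exploit cancellation \emph{between} Kloosterman sums to different moduli: the paper sums $S(h,\lambda f;g_1)$ over all $g_1 \in \cM_j$ and invokes the function-field Linnik--Selberg bound \eqref{linnik-selberg} of Cogdell--Piatetski-Shapiro, $\sum_{g_1 \in \cM_j} S(h,\lambda f;g_1) \ll q^{j(1+\epsilon)}|fh|^\epsilon$, which saves the extra $q^{j/2}$. This averaged input is the decisive ingredient your sketch is missing, and it is precisely why the untwisted case has a larger range than the twisted ones.

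Two secondary points. First, your initial decomposition opens $d(f)$ with divisors of degree up to $n/2$, whereas the relevant divisor switch is on $d(f+h)$ with threshold $[d/2]$, $d=\max\{m,n\}$, since $\deg(f+h)=d$; in your setup the inner function $d(gk+h)$ still has conductor of size $d$, so you would in any case need a second divisor switch at $d/2$ before the character detection, and the moduli appearing in the Kloosterman sums are then the small divisors of $f+h$, not the unrestricted divisors $c$ you describe. Second, your diagonal evaluation (collapsing $\sum_k d(k+h/g)$ via $\sum_{k \in \cM_N} d(k)=(N+1)q^N$) implicitly uses the bijection $k \mapsto k+h/g$ on $\cM_{n-\deg g}$, which requires $\deg(h/g) < n - \deg g$; in the regime $m \geqs n+2$ where the error term is actually nonzero this fails and the main term must instead be extracted, as in the paper, from the residue term of the Voronoi formula (Lemma~\ref{residue_d}) together with a contour-integral computation. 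Your explanation of the vanishing of $E$ for $m \leqs n+1$, or $m=n+2$ with $n$ odd, via emptiness of the dual sum is correct in spirit.
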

Note that this gives an asymptotic formula provided that $\deg(h) <  (2-\epsilon)n$; thus, we recover exactly the same range as the aforementioned work of Meurman \cite{meurman}.

\begin{remark}Notice that when $m\leqs [n/2]$, we have that $d=n$ and the above expression simplifies as follows:
\begin{align*}
        \sum_{f \in \mathcal{M}_n} d(f) d(f+h) 
       &= q^n \sum_{\substack{g|h}} \frac{1}{|g|} \Big[\Big(n-2\deg(g)+1\Big)^2-\frac{1}{q}\Big(n-2\deg(g)-1\Big)^2\Big].
    \end{align*}
\end{remark}
We now turn to generalizations. In $\Z$, progress on problems like \eqref{SCPinZ} goes hand in hand with similar progress about the sums-of-two-squares function
\[ r(n) = \#\{(a,b) \in \Z^2 : a^2+b^2 = n \} = 4 \sum_{d \mid n} \chi_{-4}(d). \]
where $\chi_{-4}$ is the non-trivial character modulo $4$. 
An analogue of sums-of-two-squares for $\FqT$ was introduced in \cite{FFanalogLandau} (see also \cite{BaBSF,Bary-Soroker-Fehm}).  
For this, defining
\[ r(f) = \#\{ (a,b) \in \Fq[T]: a^2 + Tb^2 = f\}/\{\pm 1\}, \]
then $r$ can similarly be written as a convolution of $1$ with the quadratic character mod $T$ given by the Legendre symbol $\chi(f) = \Big(  \frac{f}{T} \Big)$, see \eqref{quadratic}. Regarding this function, we can prove the following (which follows as a corollary of Theorem~\ref{th1}). 

\begin{corollary}
    
\label{thm:sumsof2squares}    Suppose $q$ is odd, $h$ is a monic polynomial with $\deg(h)=m$ and let $d =\max\{m,n\}$. Let $\chi$ be the quadratic character (i.e., the Legendre symbol) modulo $T$. 
    Then
    \begin{align*}        \sum_{f \in \mathcal{M}_n} r(f) r(f+h) = & \frac{ q^{n}  L(1,\chi)^2}{1+q^{-1}}  \sum_{\substack{g|h \\ \deg(g) \leqs [d/2], T \nmid g}} \frac{ 1}{|g|} \\  &   -  \frac{q^{n-1} L(1, \chi)^2}{1+q^{-1}} \sum_{\substack{g| h \\ \deg(g) \leqs [(d-1)/2], T \nmid h/g}} \frac{ 1}{|g|}+E,\end{align*}
where
\[E \ll \begin{cases}
 q^{3n/4+\epsilon n} & \mbox{ if } m \leqs n, \\  
    q^{3d/4+2\deg(\ell)+\epsilon d} & \mbox{ otherwise. } 
    \end{cases}
\]
     \end{corollary}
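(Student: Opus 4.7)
The plan is to derive Corollary~\ref{thm:sumsof2squares} by specializing Theorem~\ref{th1} to the quadratic character $\chi = \left(\frac{\cdot}{T}\right)$ modulo the monic irreducible polynomial $\ell = T$ (so $\deg(\ell) = 1$). The first step is to invoke the identity $r(f) = (1 \star \chi)(f) = r_\chi(f)$ (cf.\ \eqref{quadratic} in the text), which immediately turns
\[
\sum_{f \in \mathcal{M}_n} r(f) r(f+h) = \sum_{f \in \mathcal{M}_n} r_\chi(f) r_\chi(f+h),
\]
so the corollary becomes a direct consequence of the self-correlation statement of Theorem~\ref{th1} applied to this particular $\chi$.

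Next I would carry out the simplification of the main term. Theorem~\ref{th1} expresses this main term as a sum over $g \mid h$ weighted by the local data attached to $\chi$. The support conditions $T \nmid g$ and $T \nmid h/g$ in the stated formula correspond to the two factors of $r_\chi$ being evaluated at arguments coprime to the conductor $\ell = T$, so that $\chi$ does not vanish on the relevant divisors. The constants $L(1,\chi)^2$ and $(1+q^{-1})^{-1}$ should appear naturally from evaluating the associated Dirichlet series at $s=1$ (here $L(1,\chi)$ is finite since $\chi$ is non-principal), while the two-piece structure (a positive $q^n$-term and a negative $q^{n-1}$-term) tracks the two boundary contributions in the diagonal analysis; the prefactor $q^{-1}$ in the second term is precisely the local Euler factor $q^{-\deg \ell}$ specialized at $\deg(\ell) = 1$, and the symmetric roles of $g$ and $h/g$ reflect the fact that $\chi$ is quadratic (so that $\chi = \overline{\chi}$ on its support).

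Finally, the error bound follows by substituting $\deg(\ell) = 1$ directly into the error bound provided by Theorem~\ref{th1}. In the balanced range $m \leqs n$ this gives the exponent $3n/4 + \epsilon n$, and in the unbalanced range $m > n$ it gives $3d/4 + 2\deg(\ell) + \epsilon d$; I keep the unbalanced bound written in terms of $\deg(\ell)$ to emphasize where the dependence on the conductor of $\chi$ enters (from, for instance, the Voronoi/functional equation step advertised in the abstract). The main obstacle, namely the analytic input via the Voronoi summation formula and Kloosterman-type bounds in $\FqT$, is shouldered entirely by Theorem~\ref{th1}; for the corollary itself the only genuine task is a careful bookkeeping of the Euler factor at $T$, which is straightforward since $\chi$ has prime conductor and is quadratic.
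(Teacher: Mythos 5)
Your overall route is the same as the paper's: identify $r = r_\chi$ with $\chi$ the quadratic character modulo $\ell = T$, apply Theorem~\ref{th1} with $\chi_1=\chi_2=\chi$, and substitute $\deg(\ell)=1$ into its error bound. That part is fine, and the error-term discussion is correct.

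The gap is in the main-term bookkeeping, which is the only genuine content of this deduction, and your stated reasons for the constants are not the actual ones. The first sum's restriction $T\nmid g$ (and $T\nmid h/g$ in the second) comes from $\chi_1\chi_2(g)=\chi^2(g)=\1_{(g,T)=1}$, i.e.\ the product character is the principal character mod $T$; consequently the denominator $1+q^{-1}$ is $L(2,\chi^2)=\zeta(2)\,(1-q^{-2})$, an evaluation at $s=2$ of the principal $L$-function, not something obtained ``at $s=1$.'' More importantly, the coefficient $-q^{n-1}$ of the second term does not arise from ``boundary contributions in the diagonal analysis'' nor from a ``local Euler factor $q^{-\deg\ell}$'': in Theorem~\ref{th1} that coefficient is $q^n\,\ov{\chi}(-1)G(\chi)^2G(\ov{\chi^2})/|\ell^2|$, and one must actually evaluate the Gauss sums. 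Since $\chi$ is real and primitive mod $T$, one has $\chi(-1)G(\chi)^2=q$, while $G(\ov{\chi^2})=G(\chi_0 \bmod T)=\sum_{a\in\F_q^\times}e_q(a)=-1$; hence $\ov{\chi}(-1)G(\chi)^2G(\ov{\chi^2})=-q$, and dividing by $|\ell^2|=q^2$ gives exactly the factor $-q^{-1}$, i.e.\ the negative $q^{n-1}$-term (this is the one identity the paper records, together with the observation that $\chi$ mod $T$ is odd, which requires $q$ odd). Without this computation your proposal does not produce the sign or the precise power of $q$ in the second term, so as written the deduction is incomplete and its heuristic justifications would lead you astray.
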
 
Finally, note that the character in the previous result is quadratic. However, our methods work equally well for general characters. We thus prove also the following theorems, from which the above corollary immediately follows. Let $r_{\chi}(f)$ be as in equation \eqref{def:rchi} and let $G(\chi)$ be as in equation \eqref{def:Gauss_sum_over_FqT}. We have the following.
\begin{theorem}
\label{th1}
    Suppose $h$ is a monic polynomial with $\deg(h)=m$ and let $d =\max\{m,n\}$. Let $\chi_1,\chi_2$ be two primitive characters modulo $\ell$, for some monic irreducible polynomial $\ell$ with $\deg(\ell)>0$. 
    Then
    \begin{multline*}
        \sum_{f \in \mathcal{M}_n} r_{\chi_1}(f) r_{\chi_2}(f+h) =  \frac{ q^n 
 L(1,\chi_1)L(1,\chi_2)}{L(2,\chi_1\chi_2)}  \sum_{\substack{g|h \\ \deg(g) \leqs [d/2]}} \frac{ \chi_1 \chi_2(g)}{|g|}
     \\
    +  \frac{q^n \ov{\chi_2}(-1) G(\chi_1) G(\chi_2)G(\ov{\chi_1 \chi_2})L(1,\ov \chi_1)L(1,\ov{\chi_2})}{|\ell^2|L(2,\ov{\chi_1 \chi_2})} \sum_{\substack{g| h \\ \deg(g) \leqs [(d-1)/2]}} \frac{ \chi_1 \chi_2(h/g)}{|g|}+E,
    \end{multline*}
    where 
\[E \ll \begin{cases}
 q^{3n/4+\epsilon n} & \mbox{ if } m \leqs n \text{ and } \deg(\ell) = 1, \\  
    q^{3d/4+2\deg(\ell)+\epsilon d} & \mbox{ otherwise. } 
    \end{cases}
\]
    
\end{theorem}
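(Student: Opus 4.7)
The proof will follow the Estermann/Voronoi strategy underlying Theorem~\ref{dd}, adapted to track Dirichlet characters throughout. The crucial new tool is the Voronoi summation formula for $r_\chi = 1 \star \chi$ in $\FqT$ -- equivalently, the functional equation for the twisted Estermann function -- advertised in the abstract as a novel contribution of this paper; I would combine it with the Weil bound for function field Kloosterman sums to control the error terms. The overall architecture is to open up $r_{\chi_1}$ via $f = ab$, split the $a$-variable at the hyperbola threshold $\deg(a) = [d/2]$, treat the short-$a$ range directly via an arithmetic-progression expansion, and transform the long-$a$ range through Voronoi.

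For the short-$a$ range, setting $c = ab + h$ converts the inner sum to
\begin{equation*}
\sum_{\substack{c \in \mathcal{M}_n \\ c \equiv h \,(\mathrm{mod}\,a)}} r_{\chi_2}(c),
\end{equation*}
with a suitable analogue when $m \geqs n$, in which case I would instead open $r_{\chi_2}$ so that the short variable always has degree at most $[d/2]$. Expanding the AP-indicator via Dirichlet characters modulo $a$, the principal-character contribution assembles, over $a \in \mathcal{M}$ with $\deg(a) \leqs [d/2]$, into the first main term
\begin{equation*}
\frac{q^n L(1,\chi_1) L(1,\chi_2)}{L(2, \chi_1 \chi_2)} \sum_{\substack{g \mid h \\ \deg(g) \leqs [d/2]}} \frac{\chi_1 \chi_2(g)}{|g|},
\end{equation*}
with the divisors $g \mid h$ emerging from the compatibility of the AP-modulus $a$ with the shift $h$, and the $L$-ratio coming from the relevant Euler-product identity evaluated at $s=1$. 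The long-$a$ range (equivalently, the non-principal characters modulo $a$) is handled by the twisted Voronoi formula, which replaces $r_{\chi_2}$ in the AP by its dual involving $\overline{\chi_2}$, weighted by a Gauss sum $G(\chi_2)$. The spectral diagonal on the dual side assembles into the second main term
\begin{equation*}
\frac{q^n \overline{\chi_2}(-1) G(\chi_1) G(\chi_2) G(\overline{\chi_1 \chi_2}) L(1,\overline{\chi_1}) L(1,\overline{\chi_2})}{|\ell|^2 L(2,\overline{\chi_1\chi_2})} \sum_{\substack{g \mid h \\ \deg(g) \leqs [(d-1)/2]}} \frac{\chi_1 \chi_2(h/g)}{|g|},
\end{equation*}
in which the identity $\chi_1\chi_2(h/g) = \chi_1\chi_2(h) \overline{\chi_1\chi_2}(g)$ encodes the $\chi \mapsto \overline{\chi}$ swap intrinsic to the functional equation, and $|\ell|^{-2}$ together with the three Gauss sums supplies the correct dual normalization of size $|\ell|^{-1/2}$.

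All remaining terms are Kloosterman-type sums in $\FqT$ twisted by $\chi_1$, $\chi_2$, or $\chi_1\chi_2$. Applying the Weil bound pointwise and summing against standard divisor-bound estimates yields $E \ll q^{3d/4 + 2\deg(\ell) + \epsilon d}$ in general, and in the special case $\deg(\ell)=1$ with $m \leqs n$ the $|\ell|$-powers degenerate and a tighter truncation of the dual side produces the sharper bound $q^{3n/4 + \epsilon n}$. The main obstacle will be formulating the twisted Voronoi/Estermann functional equation over $\FqT$ with all constants explicit and checking that the three Gauss sums, two dual $L$-values, and the sign $\overline{\chi_2}(-1)$ emerge in exactly the predicted shape, with the correct cutoff $[(d-1)/2]$ in the dual sum; a secondary technical issue is the case split $m \leqs n$ versus $m > n$, which alters both the hyperbola threshold and determines which of $r_{\chi_1}, r_{\chi_2}$ should be opened in the first step.
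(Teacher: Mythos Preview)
Your outline is in the right spirit---open one factor, pass to arithmetic progressions, apply a Voronoi-type identity, bound the dual side with Weil---but two of the mechanisms you describe are incorrect and would not lead to a proof as written.

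First, the congruence detection: the paper (and any viable version of this argument) uses \emph{additive} characters modulo $g$ to expand the indicator of $f \equiv -h \pmod g$, not Dirichlet characters. Dirichlet characters modulo $a$ only see residues coprime to $a$ and would not cleanly detect the arithmetic progression, nor would they produce the Kloosterman sums that are the actual source of cancellation. Relatedly, your identification ``the long-$a$ range (equivalently, the non-principal characters modulo $a$)'' is simply false: the hyperbola split on divisor length and a principal/non-principal split of a character expansion are unrelated decompositions. In the paper, additive characters produce sums $\sum_{f\in\cM_n} r_{\chi_1}(f)\,e(b_1 f/g_1)$ with $(b_1,g_1)=1$, and the Voronoi formulae of Propositions~\ref{prop:Rvoronoi_coprime} and~\ref{prop:Rvoronoi_notcoprime} (the promised functional equations for the twisted Estermann function, built in \S\ref{sec:func}--\ref{sec:voronoi}) transform these directly; there is no multiplicative-character expansion modulo the divisor anywhere in the argument.

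Second, the paper opens $r_{\chi_2}(f+h)$, not $r_{\chi_1}(f)$, via the divisor-switching identity
\[
r_{\chi_2}(f+h)=\sum_{\substack{g\mid f+h\\ \deg g\leqs [d/2]}}\chi_2(g)+\sum_{\substack{g\mid f+h\\ \deg g\leqs [(d-1)/2]}}\chi_2\Big(\frac{f+h}{g}\Big),
\]
which is what naturally produces the cutoff $[d/2]$ (since $\deg(f+h)=d$) and the two pieces $S_1,S_2$ uniformly in $m$ and $n$; your proposed case split on $m\leqs n$ versus $m>n$, and the accompanying switch of which factor to open, is unnecessary. The residue $\scrM_n$ at $u=1/q$ in Voronoi (Lemma~\ref{residue}) yields the first main term after assembling $M_{11}$ (see \eqref{m11_expression}); the second main term is $M_2$ from $S_2$ (see \eqref{m2l}), with the case $\ell_1=\ell_2=\ell$ forcing $M_{12}=E_{12}=0$ by Remark~\ref{remarkell}, which is why only two main-term pieces survive rather than four. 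The error bound does come from Weil's pointwise bound as you say; Linnik--Selberg is unavailable here because the Kloosterman sums are twisted by characters.
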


We note that the above is truly an asymptotic formula when $m<\frac{4n}{3}(1-\epsilon) - \frac{8 \deg(\ell)}{3}$. 

\begin{remark} \label{rem:norm-counting}When $\chi_1 = \chi_2 = \chi$ is a quadratic character and $q \equiv 1 \pmod 4$, $r_\chi$ is a norm-counting function for a quadratic extension of $\Fq[T]$ (see \S\ref{subsec:norm-counting} for the precise definition). This is analogous to the work of Muller \cite{muller} in the number field setting. 
\end{remark}

The following theorem provides an asymptotic formula in the case $\ell_1 \neq \ell_2$.
\begin{theorem}
    \label{th3}
    Suppose that $h$ is a monic polynomial with $\deg(h)=m$ and let $d =\max\{m,n\}$. Let $\chi_1, \chi_2$ be primitive characters modulo $\ell_1$ and $\ell_2$ respectively, where $\ell_1 \neq \ell_2$. Further assume that $\ell_1, \ell_2 \neq 1$. Then
    \begin{align*}
       &  \sum_{f \in \mathcal{M}_n} r_{\chi_1}(f) r_{\chi_2}(f+h) =   \frac{ q^n L(1,\chi_1)L(1,\chi_2)}{L(2,\chi_1\chi_2)} 
    \sum_{\substack{g|h \\ \deg(g) \leqs [d/2]}} \frac{\chi_1 \chi_2(g)}{|g|} \\
    &+ \frac{q^n \chi_2(\ell_1) L(1,\ov{\chi_1}) L(1,\chi_2)  }{|\ell_1|L(2,\ov{\chi}_1\chi_2)} \sum_{\substack{g|h \\ \deg(g) \leqs [d/2]-\deg(\ell_1)}} \frac{\chi_2(g) \chi_1(h/g)}{|g_4|} \\
    &+ \frac{q^n \chi_1(\ell_2) \ov{\chi_2}(-1) L(1,\chi_1) L(1,\ov{\chi_2}) }{|\ell_2|L(2,\chi_1 \ov{\chi_2})} \sum_{\substack{g|h \\ \deg(g) \leqs [(d-1)/2]}} \frac{\chi_1(g) \chi_2(h/g)}{|g|}  \\
    &+\frac{q^n \ov{\chi_1}(\ell_2) \ov{\chi_2}(-\ell_1) L(1,\ov \chi_1)L(1,\ov{\chi_2})}{|\ell_1 \ell_2 | L(2,\ov{\chi_1 \chi_2}) }     \sum_{\substack{g|h \\ \deg(g) \leqs [(d-1)/2] - \deg(\ell_1)}} \frac{ \chi_1 \chi_2(h/g)}{|g|} +E,
    \end{align*}
    where 
    \[
        E \ll 
           \begin{cases}q^{3n/4+\epsilon n} & \mbox{ if } m \leqs n, n \equiv 0 \pmod 2, \deg(\ell)=1,\\ 
        q^{3 d/4+ \deg(\ell_1)/2+ 3 \deg(\ell_2)/2+\epsilon d} & \mbox{ otherwise.}
    \end{cases}
    \]
\end{theorem}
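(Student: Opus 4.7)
The plan is to adapt the hyperbola method and Voronoi summation approach used in the proof of Theorem~\ref{th1} to the two-modulus setting. Expanding both $r_{\chi_i}$ as convolutions gives
\[ \sum_{f \in \mathcal{M}_n} r_{\chi_1}(f) r_{\chi_2}(f+h) = \sum_{g_1, g_2} \chi_1(g_1) \chi_2(g_2) \#\{ f \in \mathcal{M}_n : g_1 \mid f, \, g_2 \mid f+h \}. \]
I would split each divisor variable $g_i$ at the cutoff $[d/2]$ (with small shifts to account for parity/degree overflow when $m > n$), producing four regimes: (short, short), (short, long), (long, short), and (long, long). The four main terms in the theorem statement correspond bijectively to these four regimes.

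In the (short, short) regime, I would swap the order of summation and directly count monic $f$ of degree $n$ lying in the intersection of two residue classes modulo $g_1$ and $g_2$; this requires $\gcd(g_1,g_2) \mid h$, and standard Dirichlet-series manipulation produces the first main term with its $L(1,\chi_1)L(1,\chi_2)/L(2,\chi_1\chi_2)$ factor. In the mixed regimes, I would apply the function-field Voronoi summation formula (equivalently, the functional equation of the Estermann function) developed earlier in the paper, applied to only one of the two divisor sums; this dualizes $\chi_i$ to $\overline{\chi_i}$ and introduces the Gauss sum $G(\chi_i)$. Since the dualization occurs at modulus $\ell_i$ while the untouched character has modulus $\ell_j \neq \ell_i$, the resulting main term picks up a multiplicative factor $\chi_j(\ell_i)$ from lifting the undualized character through the new modulus $\ell_i$ (and, in one case, a sign $\overline{\chi_2}(-1)$ from the functional-equation root number). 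This gives the second and third main terms. The (long, long) regime is handled by applying Voronoi to both divisor sums, which produces the combined prefactor $\overline{\chi_1}(\ell_2)\overline{\chi_2}(-\ell_1)$ and the fourth main term; the Gauss-sum product $G(\chi_1)G(\chi_2)$ simplifies against normalization factors $|\ell_1\ell_2|$.

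The main technical obstacle will be the control of the off-diagonal contributions after Voronoi, which take the form of incomplete Kloosterman-type sums in $\FqT$ twisted by the characters $\chi_1, \chi_2$ to moduli $\ell_1, \ell_2$. Applying the Weil bound for these sums (and, where possible, averaging over a second modulus to extract additional cancellation) should yield the stated error estimate $q^{3d/4 + \deg(\ell_1)/2 + 3\deg(\ell_2)/2 + \epsilon d}$ in general. In the balanced case $m \leq n$ with $n$ even and $\deg(\ell) = 1$, the cutoffs align so that the dual variables stay short enough to exploit sharper cancellation, yielding the improved bound $q^{3n/4+\epsilon n}$, in parallel with the corresponding regime of Theorem~\ref{th1}. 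The principal bookkeeping challenge will be verifying that all four Gauss-sum prefactors combine correctly, in particular that the reciprocity factors $\chi_j(\ell_i)$ emerge with the correct signs and normalizations across all four regimes; a useful check is that specializing $\ell_1 = \ell_2 = \ell$ should recover Theorem~\ref{th1} after the four cross terms collapse into two.
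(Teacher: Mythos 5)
Your plan diverges from the paper's actual route, and as written it has two genuine gaps. The paper never opens both convolutions: it divisor-switches only $r_{\chi_2}(f+h)$ (short divisors $g$ versus co-divisors $(f+h)/g$), keeps $r_{\chi_1}(f)$ intact, detects the congruence $f\equiv -h \pmod{g}$ (or $\bmod\ g\ell_2$, after sorting $(f+h)/g$ by its class mod $\ell_2$, which is where $G(\chi_2)$ and $\ov{\chi_2}(-1)$ enter), and then applies the Voronoi formulae (Propositions \ref{prop:Rvoronoi_coprime} and \ref{prop:Rvoronoi_notcoprime}) to the \emph{complete} sum $\sum_{f\in\cM_n} r_{\chi_1}(f)e(b_1f/g_1)$. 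The four main terms are indexed by (short/long half of $f+h$) $\times$ ($\ell_1\mid g_1$ or $\ell_1\nmid g_1$), the second factor reflecting that the residue term of the Voronoi formula changes shape ($L(1,\chi_1)$ versus $G(\chi_1)\ov{\chi_1}(c)L(1,\ov{\chi_1})$) according to whether the modulus is divisible by $\ell_1$. This is incompatible with your claim that the four main terms correspond bijectively to the four regimes (short,short), (short,long), (long,short), (long,long) of a double hyperbola split: for instance, when $m>n$ the first main term runs over $g\mid h$ with $\deg(g)\leqs [d/2]>[n/2]$, which your (short,short) regime (divisors of $f$ cut at $[n/2]$) cannot supply by itself. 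Relatedly, your plan to apply the Estermann/Voronoi machinery ``to only one of the two divisor sums'' after both convolutions have been opened does not parse: the functional equation proved in the paper is for the full generating function $\cR_2(u;\chi,c/g)$, i.e.\ for complete sums of $r_\chi$ against additive characters, not for a truncated half of a divisor sum. If you keep one factor complete so that the formula applies, you have abandoned the four-regime decomposition and are back to the paper's setup.

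The second gap is the error analysis. You assert that the (short,short) regime is handled by direct counting plus ``standard Dirichlet-series manipulation,'' but once $m>n$ the joint modulus $[g_1,g_2]$ can have degree exceeding $n$, the count of $f\in\cM_n$ in the joint residue class is no longer $q^{n}/|[g_1,g_2]|$, and this is exactly where the Kloosterman-sum difficulty lives; in your organization the hardest part of the theorem sits inside the regime you treat as routine. You also do not derive, even heuristically, that your off-diagonal terms land on the stated exponent $q^{3d/4+\deg(\ell_1)/2+3\deg(\ell_2)/2+\epsilon d}$; in the paper this comes from the specific dual length $\mu\approx 2\deg(g_1\ell_2)+\deg(\ell_1)-n-2$ produced by the functional equation, followed by the pointwise Weil bound \eqref{weil}. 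Two further corrections: averaging over moduli (Linnik--Selberg, via \cite{cps}) is \emph{not} available here because the Kloosterman sums are twisted by $\chi_1,\chi_2$ — the paper stresses this, so ``averaging over a second modulus'' should be dropped; and the first case $E\ll q^{3n/4+\epsilon n}$ (for $m\leqs n$, $n$ even, $\deg(\ell_1)=\deg(\ell_2)=1$) arises because the dual sums after Voronoi are empty ($\mu<0$), leaving only the contour-shift errors from evaluating the main terms, not because of ``sharper cancellation'' in short dual sums. Finally, the consistency check of specializing $\ell_1=\ell_2$ is not available as stated, since Theorem \ref{th3} assumes $\ell_1\neq\ell_2$ and the decomposition itself changes in that case (cf.\ Remark \ref{remarkell}).
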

We finally have the following two companion theorems.
\begin{theorem}
    \label{th4}
   Suppose that $h$ is a monic polynomial with $\deg(h)=m$ and let $d =\max\{m,n\}$. Let $\chi$ be a primitive character modulo $\ell$, where $\ell \neq 1$. Then 
   \begin{multline*}
       \sum_{f \in \mathcal{M}_n} d(f) r_{\chi}(f+h) = \frac{q^n L(1,\chi)}{L(2,\chi)} \sum_{\substack{g|h \\ \deg(g) \leqs [n/2]}} \frac{\chi(g)}{|g|} \Big[ n+1-2\deg(g)- \frac{2L'(2,\chi)}{(\log q) L(2,\chi)}\Big]\\
       +\frac{q^n \ov{\chi}(-1) L(1,\ov{\chi})}{|\ell|L(2,\ov{\chi})}  \sum_{\substack{g|h \\ \deg(g) \leqs [n/2]-\deg(\ell)}} \frac{\chi(h/g) }{|g|} \Big[n+1-2\deg(g \ell_2) - \frac{2L'(2,\ov{\chi})}{(\log q) L(2,\ov{\chi})} \Big]+ E, 
   \end{multline*}
   where 
   \[
   E \ll 
   \begin{cases}
       q^{n/2+\epsilon n}  & \mbox{ if }  m \leqs n, \deg(\ell)=1 \text{ or } m =n+1, n \text{ odd}, \deg(\ell)=1, \\
      q^{3d/4+3\deg(\ell)/2+\epsilon d} & \mbox{ otherwise.}
   \end{cases}
   \]
\end{theorem}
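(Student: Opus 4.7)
The proof follows the same template as Theorems~\ref{th1}--\ref{th3}: open the divisor function as $d(f) = \sum_{a \mid f,\, a \text{ monic}} 1$, apply the function-field Voronoi summation formula for $r_\chi$ developed earlier in the paper, extract the main terms from the trivial-frequency contribution, and bound the off-diagonal contributions via Weil-type estimates for the resulting Kloosterman-like sums in $\FqT$.

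Concretely, write
$$S := \sum_{f \in \mathcal{M}_n} d(f)\, r_\chi(f+h) = \sum_{\substack{a \text{ monic} \\ \deg a \leqs n}} \sum_{\substack{f \in \mathcal{M}_n \\ a \mid f}} r_\chi(f+h).$$
The divisor involution $a \mapsto f/a$ makes the contribution of $\deg a > [n/2]$ mirror that of $\deg a \leqs [n/2]$, up to a boundary correction when $n$ is even, so it suffices to treat short $a$ with $\deg a \leqs [n/2]$ and double. For each such $a$, the inner sum is a sum of $r_\chi$ over the single residue class $f \equiv 0 \pmod{a}$; the Voronoi formula expresses this as a main term (the trivial-character contribution) plus a dual sum over $r_{\ov\chi}$ weighted by Kloosterman-type sums in $\FqT$.

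The two main terms in the statement arise respectively from the direct and dual diagonals. The direct diagonal imposes $g \mid h$ with $\deg g \leqs [n/2]$, while the dual diagonal brings in the modulus $\ell$ --- yielding the factor $\ov{\chi}(-1)/|\ell|$ and the replacement $\chi(g) \mapsto \chi(h/g)$ --- along with the shortened range $\deg g \leqs [n/2] - \deg\ell$. The explicit coefficients come from a Laurent expansion at $s = 1$ of the associated Dirichlet series: $L(1,\chi)/L(2,\chi)$ is the value of the relevant Euler product at $s = 1$; the linear piece $n+1 - 2\deg g$ comes from counting monic polynomials of a given degree; and the correction $-2L'(2,\chi)/((\log q)L(2,\chi))$ arises as the next-order Laurent coefficient, reflecting the double pole at $s = 1$ coming from $d = 1 \star 1$. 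The off-diagonal Voronoi contributions form the error term: generically, Weil and trivial summation give $E \ll q^{3d/4 + 3\deg\ell/2 + \epsilon d}$; in the special cases ($m \leqs n$, or $m = n+1$ with $n$ odd, combined with $\deg\ell = 1$) the dual sum truncates sharply --- mirroring the polynomial nature of the Estermann function in the exact-formula regime of \cite{gorothesis, woothesis, yiasemides} --- giving the cleaner bound $E \ll q^{n/2 + \epsilon n}$.

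\textbf{Main obstacle.} The chief difficulty lies in the main-term bookkeeping, specifically in producing the logarithmic-derivative corrections $L'(2,\chi)/((\log q)L(2,\chi))$ and $L'(2,\ov{\chi})/((\log q)L(2,\ov{\chi}))$ with the correct signs and the correct dependence on $\deg g$ and $\deg\ell$. These emerge from a second-order Laurent expansion of the relevant Euler products at $s=1$, which interacts with the degree-conditioned truncation of the sums over $g \mid h$ and the $\deg\ell$-shift in the dual piece. A secondary subtlety is establishing the sharpened error bound in the special cases, which requires identifying when the functional equation of the $\FqT$-Estermann function forces a finite truncation; the condition $\deg\ell = 1$ helps here by making $\chi$ essentially a character of $\F_q^*$, which simplifies the dual Kloosterman structure.
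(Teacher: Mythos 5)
Your plan inverts the paper's decomposition: you open the divisor function $d(f)$ at the \emph{unshifted} argument and propose to Voronoi-sum $r_\chi$ at the shifted argument, whereas the paper (Sections~\ref{sec:setup}--\ref{section_mt1}, case $\ell_1=1$, $\ell_2=\ell$) opens $r_\chi(f+h)$ via the divisor switch at level $[d/2]$, $[(d-1)/2]$ and applies the Voronoi formula (Proposition~\ref{prop:Rvoronoi_coprime} with Lemma~\ref{residue_d}) to the complete sum $\sum_{f\in\cM_n} d(f)e(b_1f/g_1)$, assembling the main terms \eqref{m1_1chi}, \eqref{m2_1chi} by contour integrals. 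For $m\leqs n$ your route is viable and arguably cleaner (the dual lengths $\mu=2\deg(a_1)+\deg(\ell)-n-2\leqs \deg(\ell)-2$ are tiny, and the double pole at $u=1$ in the $a$-sum would indeed reproduce the $n+1-2\deg(\cdot)$ and $L'(2,\cdot)/L(2,\cdot)$ corrections), but there is a genuine gap when $m>\,n$, which the theorem explicitly covers (note that $d=\max\{m,n\}$, the generic error is $q^{3d/4+3\deg(\ell)/2+\epsilon d}$, and even your ``special case'' list includes $m=n+1$). After you detect $a\mid f$ by additive characters, the sum you need to transform is $\sum_{f\in\cM_n} r_\chi(f+h)e(b_1(f+h)/a_1)$; for $m>n$ the variable $f+h$ runs over the ``short interval'' $h+\cM_n$, not over a full degree stratum, so the functional-equation/Perron machinery of \S\ref{sec:func}--\S\ref{sec:voronoi} (which is built from complete sums over $\cM_k$) does not apply, and no interval-restricted Voronoi formula is available in the paper. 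This is precisely why the paper opens the factor sitting at $f+h$: the switched moduli are then $\leqs d/2$ while the remaining complete sum is over $f\in\cM_n$, to which its Voronoi formula for $d$ applies.

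Two smaller points. First, in the regime where you claim $E\ll q^{n/2+\epsilon n}$, the dual (Voronoi) terms do not merely ``truncate sharply''---in the paper's argument they vanish identically (cf.\ Remarks~\ref{rmk4.5}, \ref{rmk5}), and the $q^{n/2+\epsilon n}$ loss comes from the contour-shift error terms in evaluating the main terms; your plan would still need that main-term error analysis, which it does not mention. Second, the sign and prefactor bookkeeping in your second main term (the appearance of $\ov{\chi}(-1)$, the factor $1/|\ell|$ from $G(\chi)G(\ov{\chi})$, and the shifted range $[n/2]-\deg(\ell)$) requires the Gauss/Ramanujan-sum manipulations of the type in \eqref{sumstar}; this is routine but is exactly where the stated coefficients are decided, so it cannot be left at the level of ``the explicit coefficients come from a Laurent expansion.''
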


\begin{theorem}
    \label{th5}
  Suppose that $h$ is a monic polynomial with $\deg(h)=m$ and let $d =\max\{m,n\}$. Let $\chi$ is a primitive character modulo $\ell$, where $\ell \neq 1$. Then 
    \begin{multline*}
      \sum_{f \in \mathcal{M}_n} r_{\chi}(f) d(f+h) =   \frac{q^n L(1,\chi)}{L(2,\chi)}  \sum_{\substack{g|h \\ \deg(g) \leqs d/2}} \frac{\chi(g)}{|g|} \Big( d+1 - 2\deg(g)\Big)\\
    +\frac{q^n L(1,\ov \chi)}{|\ell_1|L(2,\ov \chi)} \sum_{\substack{g | h \\ \deg(g) \leqs d/2-\deg(\ell_1)}} \frac{ \chi(h/g)}{|g|} \Big[d+1-2\deg(g\ell_1) - \frac{2L'(2,\ov \chi)}{\log q L(2,\ov \chi)} \Big] +E,
    \end{multline*}
  where
    \[
   E \ll 
   \begin{cases}
       q^{n/2+\epsilon n} &\mbox{ if } m \leqs n, \deg(\ell)=1, \\ 
       q^{3d/4+\deg(\ell)/2+\epsilon d} & \mbox{otherwise.}
   \end{cases}
   \]
\end{theorem}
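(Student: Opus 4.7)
The plan is to mirror the strategy used in the proof of Theorem \ref{th4}, adapted to the configuration with $r_\chi$ on the unshifted variable and $d$ on the shifted one. I would start by expanding $r_\chi$ as a Dirichlet convolution and swapping summations,
\[ \sum_{f \in \mathcal{M}_n} r_\chi(f) d(f+h) = \sum_{k=0}^{n} \sum_{a \in \mathcal{M}_k} \chi(a) \sum_{b \in \mathcal{M}_{n-k}} d(ab+h), \]
and then split the outer $k$-sum into $k \leqs \lfloor d/2 \rfloor$ (giving $\Sigma_1$) and $k > \lfloor d/2 \rfloor$ (giving $\Sigma_2$).

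For $\Sigma_1$ the inner sum $\sum_{b \in \mathcal{M}_{n-k}} d(ab+h)$ is a divisor correlation along the progression $f \equiv h \pmod{a}$, for which I would invoke Theorem \ref{dd} (or the machinery in its proof) to obtain a main term polynomial in $n - \deg a$. Summing against $\chi(a)$ and collecting by $g = (a,h)$ reproduces the first main term of the statement: the linear factor $d+1-2\deg g$ comes from the length of the divisor sum, while the prefactor $L(1,\chi)/L(2,\chi)$ emerges from the Euler product of the Dirichlet series of $\chi$ over polynomials coprime to $g$. Note that no $L'$ correction appears here because the only pole encountered is simple.

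For $\Sigma_2$ (long $a$), I would apply the function field Voronoi summation formula for $\chi$, equivalently the Dirichlet $L$-function functional equation in $\FqT$, to dualize the $a$-sum against $\ov\chi$ with a Gauss-sum twist. This transforms $\Sigma_2$ into a dual sum weighted by Kloosterman-type sums; extracting its main term yields the second stated main term, with the conductor shift $\deg(g\ell_1)$ reflecting the appearance of $\ell$ in the dual modulus. The derivative $L'(2,\ov\chi)/L(2,\ov\chi)$ arises because the dual extraction differentiates a $\log|a|$-type factor originating from the double pole of the divisor-function Dirichlet series, evaluated at the relevant point.

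The principal technical obstacle lies in the bookkeeping required to match constants, signs, Gauss-sum factors, and boundary corrections precisely to the stated form, together with the diagonal correction coming from the symmetrization step. The error terms are controlled by the pointwise Weil bound for Kloosterman sums in $\FqT$ applied after Voronoi, producing square-root cancellation in the dual sum. The improved bound $q^{n/2+\epsilon n}$ in the case $m \leqs n$, $\deg \ell = 1$ exploits the short dual range together with the essentially optimal cancellation available for degree-one conductors, paralleling the corresponding improvement in Theorem \ref{th4}; in the generic case, summing Weil pointwise across the wider dual range yields the weaker exponent $q^{3d/4+\deg(\ell)/2+\epsilon d}$.
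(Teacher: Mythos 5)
Your decomposition is genuinely different from the paper's: the paper opens the divisor function at the \emph{shifted} argument, writing $d(f+h)=2\sum_{g\mid f+h,\,\deg g\leqs[d/2]}1-\sum_{\deg g=d/2}1$, detects $f\equiv-h\pmod g$ with additive characters, and applies the character-twisted Voronoi formulas (Propositions~\ref{prop:Rvoronoi_coprime} and~\ref{prop:Rvoronoi_notcoprime}) to $\sum_{f\equiv-h\pmod g}r_\chi(f)$; the two stated main terms are the residue contributions in the cases $\ell\nmid g_1$ and $\ell\mid g_1$ (Lemma~\ref{residue} and Proposition~\ref{prop:Rvoronoi_notcoprime}), not a short/long-divisor split. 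Your route, opening $r_\chi(f)$ instead, has a genuine gap at $\Sigma_2$: you cannot ``dualize the $a$-sum against $\ov\chi$'' via the $L$-function functional equation, because $d(ab+h)$ does not depend on $a$ through additive characters; the functional equations available (e.g.\ \eqref{eqn:R1_func}) apply to sums of the form $\sum_a\chi(a)e(ca/g)u^{\deg a}$. To create such a structure you must first open $d(ab+h)$ over divisors of the shifted argument and detect divisibility with additive characters -- i.e.\ you are forced back into exactly the divisor switch on $f+h$ plus twisted Voronoi machinery of \S\ref{sec:func}--\ref{sec:voronoi} that your sketch was trying to route around, and this step is left entirely unspecified. (A further unaddressed point: when $m>n$ the inner sum $\sum_{b\in\cM_{n-k}}d(ab+h)$ runs over $h+a\cM_{n-k}$, which is \emph{not} a full arithmetic progression in $\cM_m$, so ``the machinery of Theorem~\ref{dd}'' does not apply to it as stated.)

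The main-term bookkeeping is also wrong as described. Your $\Sigma_1$ is structurally the paper's $S_1$ from Theorem~\ref{th4} (character attached to the modulus, plain divisor function Voronoi-ed), whose main term is computed in \eqref{m1_1chi}: it equals $\frac{q^nL(1,\chi)}{L(2,\chi)}\sum_{g\mid h}\frac{\chi(g)}{|g|}\big[n+1-2\deg g-\frac{2L'(2,\chi)}{(\log q)L(2,\chi)}\big]$, which \emph{does} carry an $L'(2,\chi)$ correction -- the generating series of $d$ has a double pole, so your justification ``no $L'$ appears because the only pole encountered is simple'' is false -- and it does not match the first main term of Theorem~\ref{th5}, which has the factor $d+1-2\deg g$ and no $L'$. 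In the paper that shape arises because the Voronoi-ed function is $r_\chi$ (simple pole), the linear factor coming from the double pole in the $g$-sum contour with $\mathcal{L}$ evaluated at a fixed point (see \eqref{m11_chi1}), while the second main term, with $\ov\chi$, the $1/|\ell|$ factor, $\chi(h/g)$, and the $L'(2,\ov\chi)$ term, comes from the moduli divisible by $\ell$, not from ``long divisors.'' Hence with your decomposition the stated main terms could only emerge after a nontrivial reorganization in which the spurious $L'(2,\chi)$ contributions from $\Sigma_1$ cancel against pieces of $\Sigma_2$; your proposal neither notices this nor indicates how to carry it out. The error-term claims are similarly asserted rather than derived: in the paper the bound $q^{n/2+\epsilon n}$ for $m\leqs n$, $\deg\ell=1$ comes from the dual Voronoi ranges being empty together with the contour-shift errors in the main-term evaluation, not from ``optimal cancellation for degree-one conductors.''
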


Let us now discuss the proof of Theorem~\ref{dd}. We follow the strategy of Estermann: we open the divisor function $d(f+h)$ with a divisor switching trick to restrict to divisors $g \mid f+h$ satisfying $\deg(g) \leqs d/2$. Thus, after swapping the order of summation, we need to understand
\begin{equation} \label{goal1} \sum_{\substack{f \in \cM_n \\ f \equiv -h \pmod g}} d(f) \text{ for } \deg(g) \leqs d/2. \end{equation}
One may now use additive characters to detect the congruence. As usual, the zero frequency gives a dominant contribution and the goal is to bound the nonzero frequencies 
\begin{equation} \label{goal2} \sum_{f \in \cM_n} d(f)e\bigg(\frac{af}{g}\bigg) \text{ for }\deg(a) < \deg(g) \leqs d/2 \text{ and } (a,g) = 1. \end{equation}
Here $e(\cdot)$ is the additive character defined in \S\ref{subsec:basics}. It is now natural to look for a Voronoi-type summation formula for \eqref{goal2}, which was hitherto unavailable. The main new contribution in this work is the formulation and derivation of such formulae (see \S\S\ref{sec:func}-\ref{sec:voronoi}). These results depend on a functional equation (Proposition~\ref{prop:Z_func}) for (the function field analogue of) the Hurwitz zeta function. This formula will transform \eqref{goal2} into
\[ \sum_{f \in \cM_{\mu}} d(f) e\bigg(\frac{\ov{a} f}{g}\bigg), \]
with $\mu \approx d - n$ and $\ov{a}$ is the inverse of $a$ modulo $g$. This is a beneficial move if $d < 2n$ which is why the error term in Theorem~\ref{dd} only dominates the main term in this case. The sum over $a$ yields a Kloosterman sum over $\FqT$ (see \eqref{def:S_over_FqT}) and ultimately our theorem relies on the Linnik--Selberg conjecture for these sums, which was proved by Cogdell and Piatetski-Shapiro \cite{cps}.

The ranges in Theorem~\ref{th3} are smaller than in Theorem~\ref{dd} for two reasons. First, the conductor calculation implicit in the dual length being $\mu \approx d-n$ is no longer valid, since $\ell$ will also contribute to the conductor. Second, and more importantly, we do not have access to Linnik--Selberg type cancellation when the Kloosterman sums are twisted by multiplicative characters. We must instead rely on a point-wise Weil bound \cite[Lemma A.13]{bagshaw}. 

\subsection*{Acknowledgments} This project was initiated during the Analytic Number Theory @ UCI'22 (ANTeater'22) summer school (August 1-4, 2022). ML, AM, and AS are grateful to the University of California, Irvine for its hospitality. This project was supported by a SQuaRE from the American Institute of Mathematics. The authors are grateful to AIM for the congenial work environment. The authors would also like to thank Gilyoung Cheong for many useful discussions at an earlier stage of the project. 

AF would like to thank Brian Conrey who provided the impetus for studying these questions, and for many helpful discussions and calculations related to this at a preliminary stage.

AS benefited from several conversations that took place at the AIM workshop on ``Delta symbols and the subconvexity problem" (October 16-20, 2023). AS would also like to thank Keshav Aggarwal, Ofir Gorodetsky, Yash V.~Singh, and Trevor Wooley for several useful discussions on related topics. 

\section{Preliminaries and Notation}
We adopt the standard convention that $\epsilon$ is a sufficiently small quantity that may vary from line to line. We shall use asymptotic notation, such as $\ll,\gg, O(\cdot),$ and $ o(\cdot)$ with their usual meanings. The implicit constants will usually depend on $q$ (the size of the constant field) or $\epsilon$ but will be uniform in all other parameters. 

\subsection{Basics of function fields} \label{subsec:basics} Let $p$ be the characteristic of the function field so that $q = p^\eta$ for some $\eta\geqs 1$. The set $\cM = \cup_{n\geqs 0} \cM_n$ denotes the set of monic polynomials in $\FqT$, with $\cM_n$ being the ones of fixed degree $n$.

The only valuation of $\Fq(T)$ that we shall use will be the one at the infinite place, so we reserve the notation $|\cdot|$ for the associated norm. Thus,
\begin{equation} \label{def:infinitenorm} |f| = q^{\deg f} = |\FqT/(f)|, \end{equation}
where the lattermost quantity is the cardinality of the residue ring. The completion of $\Fq(T)$ with respect to this norm is the ring of Laurent series in $1/T$,
\begin{equation} \label{def:inftycompl} \Fq(T)_\infty = \bigg\{ \alpha = \sum_{j=-\infty}^N \alpha_j T^{j} : \alpha_j \in \Fq \text{ and } N \in \Z \text{ with } \alpha_N \neq 0 \bigg\}.\end{equation}
We now fix an additive character on this ring. Denote by $e_p:\F_p \to \C^\times$ and $e_q:\Fq \to \C^\times$ the canonical additive characters,
\[ e_p(x) = \exp\Big(\frac{2\pi i x}{p}\Big), \qquad e_q(y) = e_p(\tr(y)), \]
where in the former $x \in \F_p$ and one interprets $x/p$ as an element of $\R/\Z$ and in the latter $y \in \Fq$ and  $\tr:\Fq \to \F_p$ is the (absolute) trace. The standard additive character on \eqref{def:inftycompl} is then defined by
\[ e(\alpha) = e_q(\res(\alpha)) = e_q(\alpha_{-1}), \]
where $\alpha \in \Fq(T)_\infty$ and $\res(\alpha)$ is the residue of $\alpha$ (i.e., the coefficient $\alpha_{-1}$).

For a primitive Dirichlet character $\chi$ modulo $\ell$, we write $r_\chi = 1 \star \chi$ for the Dirichlet convolution
\begin{equation} \label{def:rchi} r_\chi(f) = \sum_{g \mid f} \chi(g). \end{equation}
Here, and throughout, sums of the form $g \mid f$ run only over monic divisors $g \in \cM$. 

For $P$ a monic irreducible polynomial, we also define the Legendre symbol by
\begin{equation} 
\label{quadratic}
    \Big(  \frac{f}{P} \Big) = f^{ \frac{|P|-1}{2}} \pmod P,
    \end{equation}
for $f \in \mathbb{F}_q[t]$. Note that $(\frac{f}{P}) \in \{ 0, \pm1\}$.

For $k \geqs 1$, $d_k(\cdot)$ is the generalized divisor function, and for $c \in \FqT$, $g \in \cM$, and $\Re(s) > 1$ we define
\[ \zeta(s;c,g) = \sum_{\substack{f \in \cM \\ f \equiv c \pmod g}} \frac{1}{|f|^s}, \]
and
\[ D_k(s;c/g) = \sum_{f \in \cM} \frac{d_k(f)e(cf/g)}{|f|^s}. \]
These are the respective analogues of the Hurwitz zeta function and the (degree $k$) Estermann functions in $\FqT$. 

Throughout, we will let $u = q^{-s}$. Thus, $|f|^{-s} = u^{\deg f} $. With this convention, one may view the above as functions of $u$. When we do so, we will use script letters $\cZ$ and $\cD$ to emphasize the change-of-variables. That is, for $|u|<1/q$,

\begin{equation}
\label{zu}
\cZ(u;c,g) = \sum_{\substack{f \in \cM \\ f \equiv c \pmod g}} u^{\deg f}, \qquad\qquad \cD_k(u;c/g) = \sum_{f \in \cM} d_k(f)e(cf/g) u^{\deg(f)}.
\end{equation} 

\begin{remark} \label{rem:well-foundedness} Observe that if $c \equiv c' \pmod g$, then \[\cZ(u;c,g) = \cZ(u;c',g) \text{ and } \cD_1(u;c/g) = \cD_1(u;c'/g)\] A similar caveat applies to all other generating functions of this type such as $\cD_2$ or $\cR_2$ (defined below). 

We have
\[ \cR_1\Big(u;\chi,\frac{c}{g}\Big) := \sum_{f \in \cM} \chi(f) e\Big(\frac{cf}{g}\Big) u^{\deg f}, \]
\[ \cZ_\chi(u;c,g) := \sum_{\substack{f \in \cM\\f \equiv c \pmod g}} \chi(f) u^{\deg f}, \]
\begin{equation} 
\label{r2}\cR_2\Big(u;\chi, \frac{c}{g}\Big) :=  \sum_{f \in \cM} r_\chi(f)  e\Big(\frac{cf}{g}\Big) u^{\deg f},
\end{equation}
which are the character twists of $\cD_1$, $\cZ$ and $\cD_2$, respectively.

\end{remark}

We also have the usual (Riemann) zeta function,
\begin{equation}\label{def: Rzeta} \zeta(s) = \sum_{f\in \cM} \frac{1}{|f|^s}, \qquad\qquad \cZ(u) = \sum_{f \in \cM} u^{\deg f} = \frac{1}{1-qu}, \end{equation}
the functional equation of which can be written as
\begin{equation} \label{eqn:RZ_func} \cZ(u) = \frac{1-u}{u(qu-1)} \cZ\Big(\frac{1}{qu}\Big). \end{equation}

For a Dirichlet character modulo $\ell$, we also define the $L$--function as 
\[\mathcal{L}(u,\chi) = \sum_{f \in \mathcal{M}} \chi(f) u^{\deg(f)}= \prod_P (1-\chi(P)u^{\deg(P)})^{-1},\] where the product above is over monic, irreducible polynomials in $\mathbb{F}_q[t]$. When $\chi$ is a non-principal character modulo $\ell$, then $\mathcal{L}(u,\chi)$ is a polynomial of degree at most $\deg(\ell)-1$.

We will frequently use the Lindel\"{o}f bound for the $L$--function as follows. If $\chi$ is a non-principal character modulo $\ell$ and for $|u| \leqs q^{-1/2}$ and any $\epsilon>0$, we have that
\begin{equation}
\label{lindelof}
|\mathcal{L}(u,\chi)| \ll |\ell|^{\epsilon}.
    \end{equation}
(For a reference on this, see Lemma $2.5$ in \cite{DFL}.) We shall also use the fact that for $|u| \leqs1/q$, we have
\begin{equation}
\label{inverse}
 |\mathcal{L}(u,\chi)|^{-1} \ll |\ell|^{\epsilon}.   
\end{equation}
For a proof of this, see Lemma $2.6$ in \cite{DFL}. 

Throughout the paper we will also frequently use Perron's formula in function fields. Namely, if the power series $\mathcal{A}(u) = \sum_{f \in \mathcal{M}} a(f) u^{\deg(f)}$ is absolutely convergent in $|u| \leqs r<1$, then 
\begin{equation}
\label{perron}
\sum_{f \in \mathcal{M}_n} a(f) = \frac{1}{2 \pi i} \oint_{|u|=r} \frac{\mathcal{A}(u)}{u^{n+1}} \, du, \, \, \, \, \, \sum_{f \in \mathcal{M}_{\leqs n}} a(f) = \frac{1}{2 \pi i} \oint_{|u|=r} \frac{\mathcal{A}(u)}{u^{n+1}(1-u)} \, du.
\end{equation}

\subsection{Norm-counting functions for quadratic extensions} \label{subsec:norm-counting}  Let $\BA$ be a Dedekind domain whose field of fractions $k$ is a global field. Further, let $K$ be a finite extension of $k$ and let $\BB$ be the integral closure of $\BA$ in $K$. For an ideal $\fb$ in $\BB$ (resp. an ideal $\fa$ in $\BA$), one has the notion of absolute norm $|\cdot|$ given by 
\[ |\fb| = |\BB/\fb| \qquad \text{ (resp. } |\fa| = |\BA/\fa|\text{).} \]
On the other hand, there is the notion of relative norm $N_{\BB/\BA}(\fb)$ which is an ideal in $\BA$ defined on primes $\fP$ by 
\[ N_{\BB/\BA}(\fP) = \fp^f \]
where $\fp = \fP \cap \BA$ is the prime under $\fP$ and $f$ is the residual (or inertial) degree of $\fP$. On other ideals of $\BB$, $N_{\BB/\BA}$ is defined by complete multiplicativity. See \cite[Chapter I, \S5]{serre}. 

\begin{remark} \label{rem:normcommutes} It is not difficult to check that the following diagram commutes, where $I$ is the set of ideals.
\[
  \begin{tikzcd}
    I_{\BB} \arrow{r}{N_{\BB/\BA}} \arrow[swap]{dr}{|\cdot|} & I_{\BA} \arrow{d}{|\cdot|} \\
     & \Z
  \end{tikzcd}
\]
\end{remark}

We now take $\BA = \FqT$ so that $k = \Fq(T)$. All ideals $\fa = (f)$ are principal and the absolute norm agrees with the valuation of $f$ as given in \eqref{def:infinitenorm}. We will also drop the subscript from $N_{\BB/\BA}$. The Dedekind zeta function of $\BB$ is given by
\[ \zeta_{\BB}(s) = \sum_{\fb \subset \BB} |\fb|^{-s}, \]
where the sum runs over non-zero integral ideals of $\BB$. This differs from the zeta function of $K$ as a curve only in that it is missing the Euler factors corresponding to the infinite places of $K$. By Remark~\ref{rem:normcommutes}, we have that
\[ \zeta_{\BB}(s) = \sum_{\fa \subset \BA} \frac{r_K(\fa)}{|\fa|^{s}} = \sum_{f \in \cM} \frac{r_K(f)}{|f|^s}, \]
where the first sum runs over integral ideals of $\BA$ and $r_K$ is the norm-counting function given by
\[ r_K(\fa) = \#\{ \fb \subset \BB : N(\fb) = \fa \}, \qquad\qquad r_K(f) = r_K((f)). \]
Some basic properties of $r_K$ are proved in \cite[\S5.2]{B-SGKS}. 

We now assume that $q \equiv 1 \pmod 4$ and assume that $K$ is a quadratic extension of $\Fq(T)$ whose field of constants has not changed. Then, $\BB = \Fq[T,\sqrt{\ell(T)}]$ for some squarefree $\ell \in \cM_{\geqs 1}$. Artin proved in his thesis \cite{artin}, \cite[Chapter 3]{roquette} that, in this case,
\[ \zeta_\BB(s) = \zeta(s)L(s,\chi) \]
where $\chi$ is the primitive quadratic character given by the Kronecker symbol $\chi(f) = \Big(  \frac{f}{\ell} \Big) = \Big(  \frac{\ell}{f} \Big)$, see \eqref{quadratic}. From this, it follows immediately that
\begin{equation} \label{eqn:r_K is r_chi} r_K(f) = r_\chi(f). \end{equation}
See also Remark~\ref{rem:norm-counting}.
\subsection{Character sums over \texorpdfstring{$\FqT$}{FqT}} 

For $f,h \in \Fq[T]$ and $g \in \cM$, we denote the Kloosterman sum over $\FqT$ by
\begin{equation} \label{def:S_over_FqT} S(f,h;g) = \copsum_{a \pmod g} e\Big(\frac{af + \ov{a}h}{g}\Big), \end{equation}
where the $*$ indicates that the sum runs over coprime residue classes modulo $g$, and $\ov{a}$ is the multiplicative inverse of $a$ modulo $g$.

For $\ell \in \cM$ and a character $\chi$ modulo $\ell$, we denote the Gauss sum by
\begin{equation} \label{def:Gauss_sum_over_FqT} G(\chi,f) = \sum_{a \pmod \ell} \chi(a)e\Big(\frac{af}{\ell}\Big), \qquad G(\chi) = G(\chi,1). \end{equation}
Note that if $\chi$ is primitive then $G(\chi,f) = \ov{\chi}(f) G(\chi)$. 

Throughout the paper, we will use the following point-wise bound for the Kloosterman sum. Lemma $1.13$ in \cite{bagshaw} gives that 
\begin{equation}
    |S(f,h;g)| \leqs2^{\omega(g)} q^{\frac{\deg(g)}{2}+ \frac{ \deg(f,h,g)}{2}},
    \label{weil}
\end{equation}
where $\omega(g)$ denotes the number of monic, irreducible factors of $g$. 

We will also use the Linnik--Selberg bound for averages of Kloosterman sums over function fields (see Remark 3 following Conjecture $1.4$ in \cite{Sardari-Zargar} and \cite{cps}). Namely,
\begin{equation} 
\label{linnik-selberg}
    \Big| \sum_{g \in \mathcal{M}_n} S(f,h;g) \Big| \ll q^{n(1+\epsilon)} |fh|^{\epsilon}.
\end{equation}

\subsection{Character sums over \texorpdfstring{$\Fq$}{Fq}} 

The orthogonality of additive characters states that
\[ \1_{x = 0} = \frac{1}{q} \sum_{\lambda \in \Fq} e_q(\lambda x) \]
for $x \in \Fq$. Separating the term $\lambda = 0$ and rearranging, we have that
\begin{equation} \label{eqn:ortho_over_Fqunits} \sum_{\lambda \in \Fq^\times} e_q(\lambda x) = - 1 + q\1_{x = 0}, \end{equation}
which will be useful in \S\ref{sec:func}.

The $r$-dimensional hyper-Kloosterman sum over $\Fq$ will be denoted by $\Kl_r$. For $\alpha_1,\cdots,\alpha_r \in \Fq$, this is defined by 
\begin{equation} \label{def:HKl_over_Fq} \Kl_r(\alpha_1,\cdots,\alpha_r) = \sum_{\substack{\gamma_1,\cdots,\gamma_r \in \Fq^\times\\\gamma_1\gamma_2 \cdots\gamma_n = 1}} e_q(\alpha_1 \gamma_1 + \alpha_2 \gamma_2 + \cdots +\alpha_n \gamma_n). \end{equation}
The important cases for us are 
\begin{equation} \label{def:Kl_over_Fq} \Kl_1(\lambda) = e_q(\lambda), \qquad \Kl_2(\alpha,\beta) = \sum_{\gamma\in \Fq^\times}e_q(\alpha\gamma + \beta\gamma^{-1}).\end{equation}
Further, it is not hard to see that
\begin{equation} \label{eqn:Kl0a_orKla0} \Kl_2(\alpha,0) = \Kl_2(0,\alpha) = -1 \text{ when } \alpha \neq 0 \end{equation}
and
\begin{equation} \label{eqn:Kl00} \Kl_2(0,0) = q-1. \end{equation}

For a multiplicative character of $\Fq$, $\chi$, we will also need the $\chi$-twisted Kloosterman sum $\Kl_{\chi}$ defined by 
\begin{equation} \label{def:chi_Kl_over_Fq} \Kl_{\chi}(\alpha,\beta) = \sum_{\gamma\in\Fq^\times} \chi(\gamma) e_q(\alpha \gamma + \beta \gamma^{-1}). \end{equation}
\begin{remark}
If $\chi$ is a Dirichlet character over $\FqT$, then the restriction $\chi\big|_{\F_q^\times}$ is a bonafide multiplicative character of $\Fq$ and hence has an associated twisted Kloosterman sum. We will slightly abuse notation and write $\Kl_\chi$ also for the latter. 
\end{remark}

An important special case of \eqref{def:chi_Kl_over_Fq} is
\begin{equation} \label{def:tau_f_over_Fq} \Kl_\chi(\alpha,0) = \sum_{\gamma \in \Fq^\times} \chi(\gamma) e_q(\alpha\gamma) =: \tau(\chi,\alpha), \end{equation}
which is the Gauss sum of (the restriction of) $\chi$ over $\Fq$. Similarly,
\begin{equation} \label{eqn:Klchi0b} \Kl_\chi(0,\beta) = \sum_{\gamma \in \Fq^\times} \chi(\gamma)e_q(\beta\gamma^{-1}) = \tau(\ov{\chi},\beta),\end{equation} 
by substituting $\gamma$ by $\gamma^{-1}$ and noting that $\ov{\chi}(\gamma) = \chi(\gamma^{-1})$. Finally,
\begin{equation} \label{eqn:Klchi00} \Kl_\chi(0,0) = \sum_{\gamma \in \Fq^\times} \chi(\gamma) = (q-1)\1(\chi \text{ is even}),\end{equation}
where we recall that $\chi$ is an even Dirichlet character if it is trivial on the units $\Fq^\times$ and is odd otherwise. 

We also set,
\begin{equation} \label{def:tau_over_Fq} \tau(\chi) = \tau(\chi,1) = \sum_{\gamma \in \Fq^\times} \chi(\gamma) e_q(\gamma). \end{equation}

\section{Functional equations} \label{sec:func}
Recall the definition \eqref{zu} of $\mathcal{Z}(u;c,g)$ and $\mathcal{D}_k(u;c/g)$, which holds for $|u|<1/q$. 

\subsection{Degree one} Our first goal is to derive functional equations for the degree one functions $\cZ$ and $\cD_1$. We have the following representation for these functions, which implicitly gives us a meromorphic continuation to the whole complex plane.

\begin{lemma} \label{lem: DZ simple} Let $g \in \cM$ and $c \in \Fq[T]$ with $\deg(c) < \deg(g)$. Then, for all $u \in \C$, we have that
\begin{equation} \label{eqn: Z simple} \cZ(u;c,g) = \frac{u^{\deg(g)}}{1 - qu}+ \1_{c\in \mathcal{M}} u^{\deg(c)}. \end{equation}
If, further, $c \neq 0$, then we have that
\begin{equation} \label{eqn: D simple} \mathcal{D}_{1}\Big(u; \frac{c}{g} \Big) = (qu)^{\deg(g) - \deg(c) - 1}\bigg( e_q\big(\sgn(c)\big) - \frac{1}{1 - qu} \bigg) + \frac{1}{1 - qu} \end{equation}
where $\1_{c\in \cM}$ is an indicator function detecting whether $c$ is monic and $\sgn(c)$ is the leading coefficient of $c$ as a polynomial. 
\end{lemma}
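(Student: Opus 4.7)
The plan is to prove the two identities in turn, both via elementary direct computation. For \eqref{eqn: Z simple}, I would partition the sum defining $\cZ(u;c,g)$ by $\deg f$. Monic $f \equiv c \pmod{g}$ with $\deg f < \deg g$ forces $f = c$, contributing $\1_{c \in \cM} u^{\deg c}$. For $\deg f = n \geqs \deg g$, write $f = c + kg$ with $k \in \Fq[T]$; since $\deg c < \deg g$ and $g$ is monic, $f$ is monic of degree $n$ iff $k$ is monic of degree $n - \deg g$, giving $q^{n-\deg g}$ choices. Summing the resulting geometric series $\sum_{n \geqs \deg g} q^{n-\deg g} u^n = u^{\deg g}/(1-qu)$ yields \eqref{eqn: Z simple}.

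For \eqref{eqn: D simple}, the first step is to decompose $\cD_1$ by residue classes modulo $g$. Since $e(cf/g)$ depends only on $f \bmod g$ (because $ck \in \FqT$ implies $e(ck) = 1$), we obtain
$$\cD_1(u; c/g) = \sum_{b \bmod g} e(cb/g)\, \cZ(u; b, g).$$
Substituting \eqref{eqn: Z simple}, the contribution of the $u^{\deg g}/(1-qu)$ term vanishes because $\sum_{b \bmod g} e(cb/g) = 0$ when $c \not\equiv 0 \pmod g$, leaving
$$\cD_1(u; c/g) = \sum_{k=0}^{\deg g - 1} u^k \sum_{b \in \cM_k} e(cb/g).$$

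To evaluate the inner sum, I parametrize $b = T^k + \sum_{i=0}^{k-1} a_i T^i$ with $a_i \in \Fq$, so that $e(cb/g) = e(cT^k/g) \prod_{i=0}^{k-1} e_q(a_i \res(cT^i/g))$. By orthogonality of $e_q$ on $\Fq$, summing over $a_0,\ldots,a_{k-1}$ gives $q^k \prod_{i=0}^{k-1} \1_{\res(cT^i/g) = 0}$. Now the Laurent expansion of $c/g$ at infinity has leading term $\sgn(c) T^{\deg c - \deg g} + O(T^{\deg c - \deg g - 1})$, so $\res(cT^i/g)$ vanishes for all $0 \leqs i < k$ precisely when $k \leqs \deg g - \deg c - 1$; in the borderline case $k = \deg g - \deg c - 1$ one has $e(cT^k/g) = e_q(\sgn(c))$, while for $k \leqs \deg g - \deg c - 2$ one has $e(cT^k/g) = 1$. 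Assembling these contributions yields
$$\cD_1(u; c/g) = \sum_{k=0}^{\deg g - \deg c - 2}(qu)^k + e_q(\sgn(c))\,(qu)^{\deg g - \deg c - 1},$$
and summing the geometric series via $\sum_{k=0}^{N-1}(qu)^k = (1-(qu)^N)/(1-qu)$ produces \eqref{eqn: D simple} after regrouping. The main obstacle is the careful Laurent-coefficient bookkeeping that pinpoints when $\res(cT^i/g) = 0$; after that, only a routine geometric-series manipulation remains.
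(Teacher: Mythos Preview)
Your proof is correct and follows essentially the same approach as the paper. For \eqref{eqn: Z simple} the arguments are identical; for \eqref{eqn: D simple} both proofs reduce to evaluating $\sum_{f\in\cM_k} e(cf/g)$ degree by degree, the only difference being that the paper quotes this evaluation from \cite[Theorem~3.7]{hayes} while you derive it directly via orthogonality and the Laurent expansion of $c/g$ (your initial detour through $\cZ$ just recovers the same degree decomposition after the $u^{\deg g}/(1-qu)$ terms cancel).
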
 

\begin{proof}

To prove the first equality, we note that writing $f = g f_1 + c$, we find that since $g$ is monic and $\deg(c)<\deg(g)$, then the condition that $f \in \cM$ is equivalent to $f_1 \in \cM$ \emph{unless} $c$ is monic and $f_1 = 0$. In the former case, $\deg(f) = \deg(gf_1 + c) = \deg(g) + \deg(f_1)$, whence $|f| = |g||f_1|$. In the latter case, $|f|=|c|$. Thus, we find that 
\[ \zeta(s;c,g) = \sum_{\substack{f \in \cM \\ f\equiv c \pmod g}} |f|^{-s} = |g|^{-s} \sum_{f_1 \in \cM} |f_1|^{-s} + \1_{c\in \cM} |c|^{-s}. \]
The claim now follows from \eqref{def: Rzeta} by rewriting both sides as functions of $u = q^{-s}$. Note that this argument also applies for $g = 1$ (in which case $c$ is necessarily $0$). 

To prove the second equality, we first note (by Taylor expansion) that the right hand side is a polynomial in $u$, given by
\[ \sum_{n = 0}^{\deg(g)-\deg(c) - 1} a_n u^n, \]
with $a_n = q^n$ for $0 \leqs n \leqs \deg(g) - \deg(c) - 2$ and $a_n = q^n e_q\big(\sgn(c)\big)$ for $n = \deg(g) - \deg(c) - 1$. On the other hand, arranging the left hand side by the degree of $f$, we find that
\[ \cD_1(u;c/g) = \sum_{f \in \cM} e\Big(\frac{cf}{g}\Big) u^{\deg f} = \sum_{n=0}^\infty u^n \sum_{f \in \cM_n} e\Big(\frac{cf}{g}\Big).\]
The equality then follows from the fact \cite[Theorem 3.7]{hayes} that under the hypotheses of the lemma,
\[ \frac{1}{q^n}\sum_{f\in\cM_n} e\Big(\frac{cf}{g}\Big) = \begin{cases} \;\; 1 & \text{if }n < \deg(g) - \deg(c) -1, \\ \;\; e_q\big(\sgn(c)\big) & \text{if }n = \deg(g) - \deg(c) - 1, \\ \;\; 0 & \text{if }n > \deg(g) - \deg(c) - 1.   \end{cases} \]
Note that in this case, we need not worry about the case $g = 1$, since that is incompatible with the requirements that $c \neq 0$ and $\deg(c) < \deg(g)$. 
\end{proof}

It is worth remarking that the above lemma implies that
\[ \cZ(u;0,g) = u^{\deg(g)} \cZ(u), \]
with $\cZ(u)$ given in \eqref{def: Rzeta}. On the other hand, it follows directly from the definition that
\[ \cD_1(u;0) = \cZ(u).\]

We are now ready to prove the functional equation of the Hurwitz zeta function. 
 
\begin{proposition}[Functional equation of $\cZ$] \label{prop:Z_func} Let $g \in \cM$ and $c \in \Fq[T]$. Then, for all $u \in \C$, we have that
\begin{equation} \label{eqn:Z_func} \cZ(u;c,g)= u^{\deg(g)-1} \sum_{\lambda \in \F_q^\times} \bigg[ \frac{e_q(-\lambda)}{q}-\frac{1}{q(1-qu)} \bigg] \mathcal{D}_1\Big( \frac{1}{qu}, \frac{\lambda c}{g}\Big) \end{equation}
\end{proposition}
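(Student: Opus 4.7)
The plan is to verify Proposition~\ref{prop:Z_func} by directly substituting the closed-form expressions from Lemma~\ref{lem: DZ simple} into both sides and checking equality. By Remark~\ref{rem:well-foundedness}, we may assume without loss of generality that $\deg(c) < \deg(g)$, so Lemma~\ref{lem: DZ simple} applies on the nose.

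First I would handle the trivial case $c = 0$ separately. Here $\lambda c = 0$ for every $\lambda \in \F_q^\times$, and $\cD_1(u; 0) = \cZ(u) = \tfrac{1}{1-qu}$, so $\cD_1(1/(qu); 0) = \tfrac{u}{u-1}$ is independent of $\lambda$. Then the sum over $\lambda$ reduces, via $\sum_{\lambda \in \F_q^\times} e_q(-\lambda) = -1$ and $\sum_{\lambda \in \F_q^\times} 1 = q-1$, to the scalar $\tfrac{u-1}{1-qu}$. Multiplying out, the right-hand side collapses to $\tfrac{u^{\deg(g)}}{1-qu}$, which matches the left-hand side since $\1_{0 \in \cM} = 0$.

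For the main case $c \neq 0$, I would set $s = \sgn(c) \in \F_q^\times$ and $m = \deg(g) - \deg(c) - 1 \geqs 0$. Substituting $u \mapsto 1/(qu)$ in \eqref{eqn: D simple}, and using that $\deg(\lambda c) = \deg(c)$ and $\sgn(\lambda c) = \lambda s$ for $\lambda \in \F_q^\times$, yields
\[
\cD_1\Big(\tfrac{1}{qu}; \tfrac{\lambda c}{g}\Big) = u^{-m}\bigg(e_q(\lambda s) - \tfrac{u}{u-1}\bigg) + \tfrac{u}{u-1}.
\]
Plugging this into the right-hand side of \eqref{eqn:Z_func} gives four terms after distributing $\big[\tfrac{e_q(-\lambda)}{q} - \tfrac{1}{q(1-qu)}\big]$. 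Summing each over $\lambda \in \F_q^\times$ requires only the identities $\sum_\lambda e_q(\lambda(s-1)) = -1 + q\1_{s=1}$ and $\sum_\lambda e_q(\pm\lambda) = -1$, which follow from \eqref{eqn:ortho_over_Fqunits}. A short computation shows that the coefficient of $u^{-m}$ simplifies to $\1_{s=1}$ (with the $-1/q$ and $+1/q$ pieces cancelling, and the $\tfrac{1}{q(1-qu)}$ and $\tfrac{u}{1-qu}$ pieces combining cleanly), while the remaining part collapses to $\tfrac{u}{1-qu}$. Multiplying by $u^{\deg(g) - 1}$ and recalling $\deg(g) - 1 - m = \deg(c)$ produces exactly $\1_{c \in \cM}\, u^{\deg(c)} + \tfrac{u^{\deg(g)}}{1-qu}$, matching \eqref{eqn: Z simple}.

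The bookkeeping step most likely to cause trouble is the simultaneous cancellation of the four terms after the $\lambda$-sum: the indicator $\1_{s=1}$ (which ultimately recovers $\1_{c \in \cM}$) only emerges after the two rational-function-in-$u$ contributions in the bracket exactly offset each other. I would carry this out carefully, noting in particular that this identity is first established for $|u| < 1/q$, where the sums in \eqref{zu} converge absolutely, and then extends to all $u \in \C$ by the rationality (indeed, polynomial + simple pole) of both sides provided by Lemma~\ref{lem: DZ simple}.
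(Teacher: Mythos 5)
Your proposal is correct and follows essentially the same route as the paper: substitute the closed forms of Lemma~\ref{lem: DZ simple} into both sides, expand the $\lambda$-sum, evaluate it with \eqref{eqn:ortho_over_Fqunits}, and check that the terms recombine into \eqref{eqn: Z simple} (the paper groups the four pieces as $A,B,C,D$ rather than by powers of $u$, and handles $c=0$ by reducing to \eqref{eqn:RZ_func}, but these are cosmetic differences). Your cancellation claims — the non-indicator $u^{-m}$ contributions summing to zero and the remainder collapsing to $u/(1-qu)$ — do check out, so the argument is complete.
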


\begin{proof}
First, since both sides depend only on the congruence class of $c$ modulo $g$ by Remark~\ref{rem:well-foundedness}, we can assume without loss of generality that $\deg(c) < \deg(g)$. 

Let us quickly dispose of the case $c = 0$ by noting that in this case $\cZ(u;0,g)$ on the left hand side becomes $u^{\deg(g)} \cZ(u)$ while every copy of $\cD_1$ on the right hand side of \eqref{eqn:Z_func} is equal to $\cZ(1/qu)$. Thus, we may execute the sum over $\lambda$ by using \eqref{eqn:ortho_over_Fqunits}. On doing so, \eqref{eqn:Z_func} simply becomes the functional equation of the Riemann zeta function, \eqref{eqn:RZ_func}, multiplied by $u^{\deg(g)}$.

When $c \neq 0$, we can replace $u$ by $1/(qu)$ in \eqref{eqn: D simple} and simplify, to find that
\[ \cD_1\Big(\frac{1}{qu}; \frac{\lambda c}{g}\Big) = u^{\deg(c) + 1 - \deg (g)}\bigg( e_q\big(\sgn(\lambda c)\big) - \frac{u}{u - 1} \bigg) + \frac{u}{u - 1}. \]
Since $\sgn(\lambda c) = \lambda \sgn(c)$, we see that the right hand side of \eqref{eqn:Z_func} may be written as 
\begin{equation} \label{eqn: ABCD} \sum_{\lambda \in \Fq^\times} \Big[A e_q(\lambda (\sgn(c) - 1)) + B e_q(\lambda\sgn(c)) + C e_q(-\lambda) + D\Big], \end{equation}
where 
\[ A : = \frac{u^{\deg(c)}}{q}, \qquad B:= -\frac{u^{\deg(c)}}{q(1-qu)},\] 
\[ C:= \frac{u^{\deg(g)} - u^{\deg(c) + 1}}{q(u-1)}, \qquad D:= \frac{u^{\deg(c) + 1}-u^{\deg(g)}}{q(1-qu)(u-1)}. \]
By orthogonality (cf. \eqref{eqn:ortho_over_Fqunits}), \eqref{eqn: ABCD} is equal to
\[ - A + qA \1_{\sgn(c) - 1 = 0} - B - C + (q-1)D. \]
Now, by a routine calculation, we find that
\[ -A - B - C + (q-1)D = \frac{u^{\deg(g)}}{1-qu}, \]
while clearly,
\[ qA\1_{\sgn(c) - 1 = 0} = \1_{c \in \cM} u^{\deg(c)}. \]
Note that, by \eqref{eqn: Z simple}, the sum of the above two expressions is precisely the left hand side of \eqref{eqn:Z_func}, completing the proof.

\end{proof}

\begin{remark} \label{rem:eta_sum_for_Z} The expression in square parentheses in \eqref{eqn:Z_func} may be written as
\[ \sum_{\eta \in \{0,1\}} (-1)^{\eta} \frac{e_q\big((\eta-1)\lambda\big)}{q(1-qu)^{\eta}} = \sum_{\eta \in \{0,1\}} (-1)^{\eta} \frac{\Kl_1\big((\eta-1)\lambda\big)}{q(1-qu)^{\eta}}.\]
This will be useful later.
\end{remark}

Proposition~\ref{prop:Z_func} can be reversed (for example, by basic linear algebra over $\C(u)$) to obtain the functional equation of $\cD_1$. It is slightly more convenient to proceed directly, with a similar argument.

\begin{proposition}[Functional equation of $\cD_1$]  \label{prop:D1_func} Let $g \in \cM$ and $c \in \Fq[T]$. Then, for all $u \in \C$, we have that
 \begin{equation} \label{eqn:D1_func}
\mathcal{D}_{1}\Big(u; \frac{c}{g} \Big) = (qu)^{\deg(g)-1} \sum_{\lambda \in \mathbb{F}_q^\times} \bigg[e_q(\lambda^{-1})-\frac{1}{1-qu}\bigg]  \mathcal{Z}\Big(\frac{1}{qu};\lambda c,g\Big).\end{equation}
\end{proposition}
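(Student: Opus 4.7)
The plan is to imitate the structure of the proof of Proposition~\ref{prop:Z_func}, with the roles of $\cZ$ and $\cD_1$ exchanged. Using Remark~\ref{rem:well-foundedness}, we may assume without loss of generality that $\deg(c) < \deg(g)$. I would then split on whether $c = 0$ or $c \neq 0$.

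In the $c = 0$ case, the left-hand side of \eqref{eqn:D1_func} is simply $\cD_1(u;0) = \cZ(u)$, while on the right-hand side, $\cZ(1/qu;\lambda c, g) = \cZ(1/qu;0,g) = (qu)^{-\deg(g)} \cZ(1/qu)$ is $\lambda$-independent. Executing the sum over $\lambda$ using $\sum_{\lambda \in \Fq^\times} e_q(\lambda^{-1}) = -1$ (change of variables $\lambda \mapsto \lambda^{-1}$ followed by \eqref{eqn:ortho_over_Fqunits} at $x = 1$) and $\sum_{\lambda \in \Fq^\times} 1 = q-1$, the identity reduces after a short manipulation to the Riemann zeta functional equation \eqref{eqn:RZ_func}.

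The case $c \neq 0$ is the heart of the argument. I would substitute the explicit formula \eqref{eqn: Z simple} (with $u$ replaced by $1/(qu)$ and $c$ replaced by $\lambda c$) into the right-hand side of \eqref{eqn:D1_func}. This produces two kinds of terms: the piece $u(qu)^{-\deg(g)}/(u-1)$ from $\cZ$'s pole, which is $\lambda$-independent, and the piece $\mathbf{1}_{\lambda c \in \cM} (qu)^{-\deg(c)}$, whose indicator collapses to $\mathbf{1}_{\lambda = \sgn(c)^{-1}}$ and hence picks out a single value of $\lambda$ on summation. The two multipliers $e_q(\lambda^{-1})$ and $-1/(1-qu)$ then generate four cross terms. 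The sums over $\lambda$ of each are now elementary: two are $-1$ or $q-1$ (from the computation above), and the two with the indicator simply evaluate at $\lambda = \sgn(c)^{-1}$, producing the crucial factor $e_q(\lambda^{-1}) = e_q(\sgn(c))$.

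The remaining task is bookkeeping: combining these four pieces and checking that the result matches the closed form \eqref{eqn: D simple}. The piece proportional to $(qu)^{\deg(g) - \deg(c) - 1}$ lines up directly with the main term $(qu)^{\deg(g)-\deg(c)-1}(e_q(\sgn(c)) - 1/(1-qu))$, while the $\lambda$-independent contribution collapses after clearing denominators to exactly $1/(1-qu)$, giving the constant term in \eqref{eqn: D simple}. I expect no serious obstacle beyond a careful tracking of the factors $u/(u-1)$ versus $1/(1-qu)$ in this final simplification; once written out, the identity falls out cleanly, just as in the proof of Proposition~\ref{prop:Z_func}.
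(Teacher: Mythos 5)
Your proposal is correct and follows essentially the same route as the paper: substitute \eqref{eqn: Z simple} at $1/(qu)$ into the right-hand side, split into the same four cross terms, and compare with the closed form \eqref{eqn: D simple}, with the constant term arising from $-C' + (q-1)D' = 1/(1-qu)$ exactly as you describe. The only cosmetic difference is that you evaluate the indicator directly at $\lambda = \sgn(c)^{-1}$, whereas the paper first applies the involution $\lambda \mapsto \lambda^{-1}$ and then invokes \eqref{eqn:ortho_over_Fqunits}; these are the same computation.
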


\begin{proof}

We proceed similarly to the proof of Proposition~\ref{prop:Z_func}. As before, we may restrict without losing generality to $\deg(c) < \deg(g)$ with $c \neq 0$ (in the case $c = 0$, \eqref{eqn:D1_func} devolves to \eqref{eqn:RZ_func} by applying \eqref{eqn:ortho_over_Fqunits}) and thus, by \eqref{eqn: Z simple}, 
\[ \cZ\Big(\frac{1}{qu};\lambda c, g\Big) = (qu)^{-\deg(g)}\frac{u}{u - 1}+ \1_{\lambda c\in \mathcal{M}} (qu)^{-\deg(c)}, \]
whence the right hand side of \eqref{eqn:D1_func} may be written as 
\begin{equation} \label{eqn: ABCD'} \sum_{\lambda \in \Fq^\times} \Big[A' e_q(\lambda^{-1})\1_{\lambda c \in \cM} + B'\1_{\lambda c \in \cM} + C' e_q(\lambda^{-1}) + D'\Big],\end{equation}
with
\[ A' : = (qu)^{\deg(g) - \deg(c) - 1}, \qquad B':= -\frac{(qu)^{\deg(g) - \deg(c)-1}}{1-qu},\] 
\[ C':= \frac{1}{q(u-1)}, \qquad D':= -\frac{1}{q(1-qu)(u-1)}. \]
Note that $\lambda \to \lambda^{-1}$ is an involution on $\Fq^\times$, so that \eqref{eqn: ABCD'} is equal to
\begin{equation*} \begin{split} \sum_{\lambda \in \Fq^\times} \Big[A' e_q(\lambda)\1_{\lambda^{-1} c \in \cM} + B'\1_{\lambda^{-1} c \in \cM} + C' e_q(\lambda) + D'\Big], \\ = A'e_q\big(\sgn(c)\big) + B' -C' + (q-1)D',\end{split} \end{equation*}
where we have used \eqref{eqn:ortho_over_Fqunits} together with the fact that $\lambda^{-1} c \in \cM$ if and only if $\lambda = \sgn(c)$. 
The result now follows from observing that
\[ A'e_q\big(\sgn(c)\big) + B' = (qu)^{\deg(g) - \deg(c) - 1}\bigg( e_q\big(\sgn(c)\big) - \frac{1}{1 - qu} \bigg) \]
is the first term in \eqref{eqn: D simple} while
\[ - C' + (q-1) D' = \frac{1}{1-qu}\]
is the second.

\end{proof}

\begin{remark} \label{rem:eta_sum_for_D1} Similar to Remark~\ref{rem:eta_sum_for_Z}, the expression in square parentheses in \eqref{eqn:D1_func} may be written as
\[ \sum_{\eta \in \{0,1\}} (-1)^{\eta} \frac{e_q\big((1-\eta)\lambda^{-1}\big)}{(1-qu)^{\eta}} = \sum_{\eta \in \{0,1\}} (-1)^{\eta} \frac{\Kl_1\big((1-\eta)\lambda^{-1}\big)}{(1-qu)^{\eta}}.\]
\end{remark}

\subsection{Degree two} We will now use Propositions~\ref{prop:Z_func}~and~\ref{prop:D1_func} in tandem to prove the functional equation for $\cD_2$. For this, we need a lemma that expresses $\cD_2$ as a combination of $\cZ$ and $\cD_1$. This lemma also implicitly provides meromorphic continuation for $\cD_2$ to the whole complex plane. 

\begin{lemma} \label{lem:D2_simple} Let $g \in \cM$ and let $c \in \Fq[T]$. Then, for all $u \in \C$, we have that
\begin{equation} \label{eqn:D2_simple} \cD_2\Big(u;\frac{c}{g}\Big) = \sum_{a \pmod g} \cZ(u;a,g)\cD_1\Big(u;\frac{ac}{g}\Big). \end{equation}

If we further assume that $(c,g) = 1$ and let $b\in \Fq[T]$ (not necessarily coprime to $g$), then for all $u \in C$, we have that
\begin{equation} \label{eqn:D2_simple_inverse} \cD_2\Big(u;\frac{b\ov{c}}{g}\Big) = \sum_{a \pmod g} \cZ(u;ac,g)\cD_1\Big(u;\frac{ab}{g}\Big), \end{equation}
where $\ov{c}$ is any inverse of $c$ modulo $g$.
\end{lemma}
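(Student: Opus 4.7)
The plan is to prove both equations by unfolding the divisor function $d(f) = \sum_{f_1 f_2 = f, \, f_1,f_2 \in \cM} 1$ and reorganizing the resulting double sum according to the residue class modulo $g$ of one of the variables. The key observation is that the additive character $e(\cdot/g)$ factors through residues mod $g$, so that $e(cf_1f_2/g)$ depends on $f_1$ only via its residue.

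For \eqref{eqn:D2_simple}, I would first write, for $|u|$ sufficiently small to ensure absolute convergence,
\[ \cD_2\Big(u;\frac{c}{g}\Big) = \sum_{f \in \cM} \Big(\sum_{\substack{f_1,f_2 \in \cM \\ f_1 f_2 = f}} 1\Big) e\Big(\frac{cf}{g}\Big) u^{\deg f} = \sum_{f_1 \in \cM} \sum_{f_2 \in \cM} e\Big(\frac{c f_1 f_2}{g}\Big) u^{\deg f_1 + \deg f_2}. \]
Next, I would partition the outer sum according to the residue class $a$ of $f_1$ modulo $g$. Since $e(c f_1 f_2/g)$ depends on $f_1$ only through its residue $a$ (by Remark~\ref{rem:well-foundedness} applied to the character, as $cf_1f_2 \equiv ca f_2 \pmod g$), we may pull the character out of the $f_1$-sum:
\[ \cD_2\Big(u;\frac{c}{g}\Big) = \sum_{a \pmod g} \Big(\sum_{\substack{f_1 \in \cM \\ f_1 \equiv a \pmod g}} u^{\deg f_1}\Big) \Big(\sum_{f_2 \in \cM} e\Big(\frac{ac f_2}{g}\Big) u^{\deg f_2}\Big). \]
Recognizing the two inner generating functions as $\cZ(u;a,g)$ and $\cD_1(u;ac/g)$ respectively yields \eqref{eqn:D2_simple} in the domain of absolute convergence; the identity then extends to all $u \in \C$ by uniqueness of meromorphic continuation (the right-hand side having been continued via Lemma~\ref{lem: DZ simple}).

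For \eqref{eqn:D2_simple_inverse}, I would simply substitute $c \mapsto b\ov{c}$ in the first identity and then perform the change of variables $a \mapsto ac$ in the sum over residues. Since $(c,g) = 1$, multiplication by $c$ is a bijection on $\FqT/(g)$, and the character $\cD_1(u; ac b\ov{c}/g) = \cD_1(u; ab/g)$ simplifies because $c\ov c \equiv 1 \pmod g$ (again using Remark~\ref{rem:well-foundedness}). This yields exactly \eqref{eqn:D2_simple_inverse}. No serious obstacle arises: the only small care needed is justifying that the meromorphic extensions match, which is immediate since both sides are rational functions of $u$ agreeing on an open set.
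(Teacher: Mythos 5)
Your argument is correct and follows essentially the same route as the paper's proof: unfold $d(f)$ into a double sum over $f_1,f_2$, fiber over the residue class of $f_1$ modulo $g$ to recognize $\cZ$ and $\cD_1$, and obtain \eqref{eqn:D2_simple_inverse} from \eqref{eqn:D2_simple} by the substitutions $c \mapsto b\ov{c}$, $a \mapsto ac$ together with $c\ov{c}\equiv 1 \pmod g$. The only difference is that you make the extension beyond $|u|<1/q$ explicit via agreement of rational functions, which the paper leaves implicit.
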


\begin{proof} As in the proof of Lemma~\ref{lem: DZ simple}, we may assume $|u| < 1/q$ for absolute convergence. On doing so, we find that
\begin{equation} \label{eqn:D2_unfolded} \cD_2\Big(u;\frac{c}{g}\Big) = \sum_{f_1,f_2 \in \cM} e\Big(\frac{cf_1f_2}{g}\Big) u^{\deg(f_1) + \deg(f_2)}. \end{equation} 
We now collect the terms with $f_1$ in a particular congruence class $a$ modulo $g$ together. For all such terms, $e(cf_1f_2/g) = e(acf_2/g)$; from this it follows that
\[ \cD_2\Big(u;\frac{c}{g}\Big) = \sum_{a \pmod g} \sum_{\substack{f_1 \in \cM\\f_1 \equiv a \pmod g}} u^{\deg(f_1)} \sum_{f_2 \in \cM} e\Big(\frac{acf_2}{g}\Big) u^{\deg(f_2)}. \]
We can close the sum over $f_2$ as a copy of $\cD_1(u;ac/g)$, while the sum over $f_1$ will then yield a copy of $\cZ(u;a,g)$; from this we obtain
\[ \cD_2\Big(u;\frac{c}{g}\Big) = \sum_{a \pmod g} \cZ(u;a,g)\cD_1\Big(u;\frac{ac}{g}\Big), \]
as desired to prove \eqref{eqn:D2_simple}.

To prove \eqref{eqn:D2_simple_inverse}, we first replace $c$ with $b\ov{c}$ and $a$ with $a_1$ in \eqref{eqn:D2_simple} to get
\[ \cD_2\Big(u;\frac{b\ov{c}}{g}\Big) = \sum_{a_1 \pmod g} \cZ(u;a_1,g)\cD_1\Big(u;\frac{a_1b\ov{c}}{g}\Big). \]
Now, we make the substitution $a_1= ac$. This gives
\[ \cD_2\Big(u;\frac{b\ov{c}}{g}\Big) = \sum_{a \pmod g} \cZ(u;ac,g)\cD_1\Big(u;\frac{acb\ov{c}}{g}\Big) = \sum_{a \pmod g} \cZ(u;ac,g)\cD_1\Big(u;\frac{ab}{g}\Big), \]
as desired. Here, we have used $c\ov{c} \equiv 1 \pmod g$, together with Remark~\ref{rem:well-foundedness}. 
\end{proof}

We are now in a position to prove the functional equation for the Estermann function, $\cD_2$. 

\begin{proposition} \label{prop:D2_func}  Let $g \in \cM_{\geqs 1}$ and $c\in \Fq[T]$ be such that $(c,g) = 1$. Then, for all $u \in \C$, we have that
\begin{equation} \label{eqn:D2_func} \cD_2\Big(u;\frac{c}{g}\Big) = (qu^2)^{\deg(g) -1} \sum_{\lambda \in \Fq^\times} \cA_\lambda(u) \cD_2\Big(\frac{1}{qu};\frac{\lambda\ov{c}}{g}\Big),\end{equation}
where $\ov{c}$ is any inverse of $c$ modulo $g$ and
\begin{equation}\label{def:A} \begin{split} \cA_\lambda(u) :&= \sum_{\eta_1,\eta_2 \in \{0,1\}}\bigg[ (-1)^{\eta_1+\eta_2}\frac{\Kl_2\big(\eta_1-1,(1-\eta_2)\lambda\big)}{q (1-qu)^{\eta_1 + \eta_2}}\bigg] \\ & =   \frac{\Kl_2(-1,\lambda)}{q} + \frac{1+q - 2qu}{q(1-qu)^2}.\end{split} \end{equation}
Here, $\Kl_2$ is as in \eqref{def:Kl_over_Fq}.
\end{proposition}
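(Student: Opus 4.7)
The plan is to combine Lemma~\ref{lem:D2_simple} with the functional equations for $\cZ$ and $\cD_1$ (Propositions~\ref{prop:Z_func} and~\ref{prop:D1_func}), and then repackage using \eqref{eqn:D2_simple_inverse}. Concretely, starting from \eqref{eqn:D2_simple}, I will substitute for $\cZ(u;a,g)$ the compact form of the functional equation from Remark~\ref{rem:eta_sum_for_Z} (introducing dummy indices $\lambda_1\in\Fq^\times$ and $\eta_1\in\{0,1\}$), and for $\cD_1(u;ac/g)$ the analogous form from Remark~\ref{rem:eta_sum_for_D1} (with $\lambda_2\in\Fq^\times$ and $\eta_2\in\{0,1\}$). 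The powers of $u$ immediately combine to give the prefactor $(qu^2)^{\deg(g)-1}$, leaving a fourfold sum over $\lambda_1,\lambda_2,\eta_1,\eta_2$ of an explicit coefficient multiplying the inner $a$-sum
\[ \sum_{a\pmod g} \cD_1\Big(\frac{1}{qu};\frac{\lambda_1 a}{g}\Big)\, \cZ\Big(\frac{1}{qu};\lambda_2 ac,g\Big). \]

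The key manipulation is to substitute $a \mapsto \lambda_1^{-1} a$ inside this inner sum (valid since $\lambda_1\in\Fq^\times$ is a unit modulo $g$), and then to set $\lambda := \lambda_1 \lambda_2^{-1}$. The resulting expression matches the right-hand side of \eqref{eqn:D2_simple_inverse} with $b=1$ and $c$ replaced by $\lambda^{-1} c$; using $\ov{\lambda^{-1}c}\equiv \lambda\ov{c}\pmod g$, the inner sum collapses to $\cD_2(\tfrac{1}{qu};\tfrac{\lambda\ov{c}}{g})$, independent of $\lambda_2$. Reparametrizing the outer sum as $\sum_{\lambda,\lambda_2}$ and executing the $\lambda_2$-sum via \eqref{def:Kl_over_Fq} yields
\[ \sum_{\lambda_2\in\Fq^\times} e_q\big((\eta_1-1)\lambda\lambda_2\big)\, e_q\big((1-\eta_2)\lambda_2^{-1}\big) = \Kl_2\big((\eta_1-1)\lambda,\,1-\eta_2\big) = \Kl_2\big(\eta_1-1,\,(1-\eta_2)\lambda\big), \]
where the second equality follows from the change of variables $\gamma\to \gamma/\lambda$ in the defining sum of $\Kl_2$. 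This directly produces the first representation of $\cA_\lambda(u)$ in~\eqref{def:A}.

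For the closed form in the second line of~\eqref{def:A}, I would then evaluate the four $(\eta_1,\eta_2)$ terms individually: the $(0,0)$-term gives $\Kl_2(-1,\lambda)/q$; the two mixed terms (where exactly one index is one) each simplify via~\eqref{eqn:Kl0a_orKla0} to $1/(q(1-qu))$; and the $(1,1)$-term uses~\eqref{eqn:Kl00} to contribute $(q-1)/(q(1-qu)^2)$. Combining these over the common denominator $q(1-qu)^2$ yields $(1+q-2qu)/(q(1-qu)^2)$, completing the identification. The main obstacle is bookkeeping: correctly tracking the change of variables $\lambda_1=\lambda\lambda_2$ and the inversion identity $\ov{\lambda^{-1}c}\equiv\lambda\ov{c}\pmod g$ when invoking~\eqref{eqn:D2_simple_inverse}. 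Once these substitutions are deployed without sign or index errors, the remaining algebra is essentially automatic.
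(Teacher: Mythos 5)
Your proposal is correct and follows essentially the same route as the paper: write $\cD_2$ via Lemma~\ref{lem:D2_simple}, insert the functional equations of $\cZ$ and $\cD_1$ in the compact forms of Remarks~\ref{rem:eta_sum_for_Z} and~\ref{rem:eta_sum_for_D1}, collapse the $a$-sum using \eqref{eqn:D2_simple_inverse}, reparametrize by $\lambda=\lambda_1\lambda_2^{-1}$, and close the remaining unit sum as a Kloosterman sum. The only (harmless) cosmetic difference is that you substitute $a\mapsto\lambda_1^{-1}a$ and execute the $\lambda_2$-sum, invoking the scaling identity $\Kl_2(\alpha\lambda,\beta)=\Kl_2(\alpha,\beta\lambda)$, whereas the paper applies \eqref{eqn:D2_simple_inverse} directly and sums over $\lambda_1$, obtaining $\Kl_2(\eta_1-1,(1-\eta_2)\lambda)$ without that extra step.
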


\begin{proof}
Before proving \eqref{eqn:D2_func}, let us establish the second equality in \eqref{def:A}. By \eqref{eqn:Kl0a_orKla0} and \eqref{eqn:Kl00}, we have
\begin{equation*} \begin{split} \cA_\lambda(u) & = \frac{\Kl_2(-1,\lambda)}{q} - \frac{\Kl_2(-1,0)}{q(1-qu)} - \frac{\Kl_2(0,\lambda)}{q(1-qu)} + \frac{\Kl_2(0,0)}{q(1-qu)^2} \\ & = \frac{\Kl_2(-1,\lambda)}{q} + \frac{1}{q(1-qu)} + \frac{1}{q(1-qu)} + \frac{q-1}{q(1-qu)^2}. \end{split} \end{equation*} 
The claim follows on combining the latter three terms above.

The proof strategy for \eqref{eqn:D2_func} is hopefully clear: we use Lemma~\ref{lem:D2_simple} to write $\cD_2$ as a combination of $\cD_1$ and $\cZ$ and apply the functional equations (cf. Propositions~\ref{prop:D1_func}~and~\ref{prop:Z_func}) of $\cD_1$ and $\cZ$. To put this into effect, we substitute \eqref{eqn:Z_func} and \eqref{eqn:D1_func} into the right hand side of \eqref{eqn:D2_simple} -- with $c$ replaced by $a$ or $ac$, as necessary -- to find that
\begin{multline} \label{eqn:D2_func_step1} \cD_2\Big(u;\frac{c}{g}\Big) = (qu^2)^{\deg(g)-1}\sum_{a \pmod g}\Bigg(\sum_{\lambda_1 \in \Fq^\times}\cD_1\Big(\frac{1}{qu};\frac{\lambda_1 a}{g}\Big) \sum_{\eta_1 \in \{0,1\}} (-1)^{\eta_1} \frac{e_q\big((\eta_1 - 1)\lambda_1\big)}{q(1-qu)^{\eta_1}} \\ \times \sum_{\lambda_2 \in \Fq^\times} \cZ\Big(\frac{1}{qu};\lambda_2ac,g\Big) \sum_{\eta_2 \in \{0,1\}} (-1)^{\eta_2} \frac{e_q\big((1-\eta_2)\lambda_2^{-1}\big)}{(1-qu)^{\eta_2}}  \Bigg),\end{multline}
where we have implicitly used Remark~\ref{rem:eta_sum_for_Z} and Remark~\ref{rem:eta_sum_for_D1}. 

We now focus only on the sum over $a$. We have
\[ \sum_{a\pmod g} \cZ\Big(\frac{1}{qu};\lambda_2 ac,g\Big) \cD_1\Big(\frac{1}{qu};\frac{\lambda_1 a}{g}\Big) = \cD_2\Big(\frac{1}{qu};\frac{\lambda_1\lambda_2^{-1}\ov{c}}{g}\Big),\]
where we have used \eqref{eqn:D2_simple_inverse} after making the appropriate substitutions. Substituting this back into \eqref{eqn:D2_func_step1} and rearranging, we see that 
\begin{multline*} \cD_2\Big(u;\frac{c}{g}\Big) = (qu^2)^{\deg(g)-1}\sum_{\lambda_1,\lambda_2 \in \Fq^\times} \Bigg(\sum_{\eta_1,\eta_2 \in \{0,1\}} (-1)^{\eta_1+\eta_2} \\ \times \frac{e_q\big((\eta_1 - 1)\lambda_1+(1-\eta_2)\lambda_2^{-1}\big)}{q(1-qu)^{\eta_1+\eta_2}}  \cD_2\Big(\frac{1}{qu};\frac{\lambda_1\lambda_2^{-1}\ov{c}}{g}\Big)\Bigg).\end{multline*}
We now collect the terms which have the same sign $\lambda = \lambda_1\lambda_2^{-1}$ together. Thus, the sum over $(\lambda_1,\lambda_2)$ may be replaced by a sum over $(\lambda,\lambda_1)$. On doing so, and rearranging with the sum over $\lambda$ outside, and the sum over $\lambda_1$ all the way inside, we get
\begin{multline*} \cD_2\Big(u;\frac{c}{g}\Big) = (qu^2)^{\deg(g)-1}\sum_{\lambda\in\Fq^\times} \cD_2\Big(\frac{1}{qu};\frac{\lambda\ov{c}}{g}\Big) \bigg[\sum_{\eta_1,\eta_2 \in \{0,1\}} \frac{(-1)^{\eta_1+\eta_2}}{q(1-qu)^{\eta_1+\eta_2}} \\\times \sum_{\lambda_1\in\Fq^\times} e_q\big((\eta_1 - 1)\lambda_1+(1-\eta_2)\lambda\lambda_1^{-1}\big)\bigg]. \end{multline*}
We can now close the sum over $\lambda_1$ as $\Kl_2(\eta_1 - 1,(1-\eta_2)\lambda)$, whence \eqref{eqn:D2_func} follows by noting that the quantity in the square brackets in the right hand side is precisely $\cA_\lambda(u)$.
\end{proof}

\begin{remark} The identity
\[ \cD_2\Big(u;\frac{c}{g}\Big) = \sum_{a,b \pmod g} e\Big(\frac{abc}{g}\Big) \cZ(u;a,g)\cZ(u;b,g) \]
holds simply by fibering the right hand side of \eqref{eqn:D2_unfolded} in terms of congruences classes of both $f_1$ and $f_2$ modulo $g$. With this as a starting point, an alternate proof of Proposition~\ref{prop:D2_func} may be obtained by applying the functional equation of $\cZ$ twice. In the classical setting, this is essentially Estermann's original strategy \cite{OGestermann} (see also \cite[Lemma 4]{conreylevinson}). This route is marginally tedious compared to our approach (especially for character-twisted analogues and the degree $3$ analogue \`a la Fazzari \cite{fazzari}; see \S\ref{sec:chifunc} and the remark below), so we leave the details to the interested reader. 
\end{remark}

\begin{remark}
An analogue of Propositions~\ref{prop:D1_func}~and~\ref{prop:D2_func} for the $GL(3)$ Estermann function, $\cD_3(u;c/g)$, has been obtained by the authors, the details of which will appear elsewhere. 
\end{remark}

\subsection{Character twists} \label{sec:chifunc}

In this section, $\chi$ is a fixed primitive character modulo $\ell$. We are making neither the assumption that $\ell$ is prime nor the assumption that $\ell = 1$ here, though one of these assumptions will become necessary in later sections. Our goal is to prove a functional equation for $\cR_2(u;\chi,c/g)$, given in \eqref{r2}. This is more complicated when the greatest common divisor $(\ell,g) \notin \{1,\ell\}$, so we restrict ourselves to the cases $(\ell,g) = 1$ and $(\ell,g) = \ell$. The reader interested in the general case may try adapting \cite[Theorem~2.3]{topaindiv}.

\begin{lemma} \label{lem:R1_func}

Let $g,\ell \in \cM$ and $c \in \Fq[T]$ such that $(\ell,g) = 1$. Further, let $\chi$ be a primitive Dirichlet character modulo $\ell$. Then, for all $u \in \C$, we have that

\begin{multline} \label{eqn:R1_func} \cR_1\Big(u;\chi,\frac{c}{g}\Big) = \chi(g)G(\chi)(qu)^{\deg(g)-1} u^{\deg(\ell)} \\\times\sum_{\lambda\in\Fq^\times} \chi(-\lambda)\bigg[e_q(\lambda^{-1}) - \frac{1}{1-qu}\bigg] \cZ_{\ov\chi}\Big(\frac{1}{qu};\lambda c\ell, g\Big), \end{multline}
where $G(\chi)$ is the Gauss sum of $\chi$ defined in \eqref{def:Gauss_sum_over_FqT}.
\end{lemma}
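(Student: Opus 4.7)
The strategy is to reduce the twisted object $\cR_1(u;\chi,c/g)$ to an untwisted Estermann-type sum $\cD_1$, apply the functional equation already proved in Proposition~\ref{prop:D1_func}, and then reassemble the result into a twisted Hurwitz-type sum $\cZ_{\ov\chi}$. The key input that makes this clean is the Gauss-sum expansion of a primitive character,
\[ \chi(f) = \frac{1}{G(\ov\chi)} \sum_{a \pmod \ell} \ov\chi(a)\, e\!\Big(\frac{af}{\ell}\Big), \]
which, crucially, is valid for \emph{all} $f \in \FqT$ when $\chi$ is primitive (not merely those coprime to $\ell$).

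The plan is as follows. First, I would insert this expansion into the definition of $\cR_1$ and combine the two additive characters using $(\ell,g)=1$:
\[ e\!\Big(\frac{af}{\ell}\Big)e\!\Big(\frac{cf}{g}\Big) = e\!\Big(\frac{(ag + c\ell)f}{\ell g}\Big). \]
After swapping the order of summation, this gives
\[ \cR_1\Big(u;\chi,\frac{c}{g}\Big) = \frac{1}{G(\ov\chi)} \sum_{a \pmod \ell} \ov\chi(a)\, \cD_1\!\Big(u;\frac{ag + c\ell}{\ell g}\Big). \]
Second, I would apply Proposition~\ref{prop:D1_func} (with modulus $\ell g$) to each $\cD_1$ on the right, producing a triple sum over $a \pmod \ell$, $\lambda \in \Fq^\times$, and $\eta \in \{0,1\}$, with an inner factor of $\cZ(1/qu;\lambda(ag + c\ell),\ell g)$ and an overall prefactor of $(qu)^{\deg(\ell g) - 1}$.

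Third, I would execute the sum over $a$. By the Chinese remainder theorem (valid since $(\ell,g)=1$), the congruence $f \equiv \lambda(ag + c\ell) \pmod{\ell g}$ decouples into $f \equiv \lambda a g \pmod{\ell}$ and $f \equiv \lambda c \ell \pmod g$. The first of these determines $a \equiv \lambda^{-1} f \ov g \pmod \ell$ uniquely, so the sum over $a$ collapses:
\[ \sum_{a \pmod \ell} \ov\chi(a)\, \1_{f \equiv \lambda(ag + c\ell) \pmod{\ell g}} = \ov\chi(\lambda^{-1} f \ov g)\, \1_{f \equiv \lambda c \ell \pmod g} = \chi(\lambda g)\, \ov\chi(f)\, \1_{f \equiv \lambda c\ell \pmod g}. \]
Reassembling the $f$-sum then yields $\chi(\lambda g)\, \cZ_{\ov\chi}(1/qu;\lambda c \ell, g)$, exactly the type of object that appears in the claimed formula.

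Finally, I would simplify the Gauss-sum prefactor. We have $(qu)^{\deg(\ell g)-1} = q^{\deg \ell}\, u^{\deg \ell}\, (qu)^{\deg g - 1}$, and the standard identity $G(\chi)G(\ov\chi) = \chi(-1)|\ell|$ (which follows from $\ov{G(\chi)} = \chi(-1)G(\ov\chi)$ and $|G(\chi)|^2 = |\ell|$ for primitive $\chi$) gives
\[ \frac{q^{\deg \ell}}{G(\ov\chi)} = \chi(-1)\, G(\chi). \]
Combining this with $\chi(-1)\chi(\lambda g) = \chi(g)\chi(-\lambda)$ produces exactly the prefactor $\chi(g) G(\chi) (qu)^{\deg g - 1} u^{\deg \ell}$ and the character $\chi(-\lambda)$ inside the sum, matching \eqref{eqn:R1_func}.

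The argument is essentially bookkeeping, so no single step is genuinely hard; the main place to be careful is the CRT step, where one must confirm that the sum over $a \pmod \ell$ really is free (i.e., that $g$ is invertible modulo $\ell$), and the Gauss-sum manipulation at the end, where sign factors like $\chi(-1)$ must be tracked precisely so that the final formula has $\chi(-\lambda)$ rather than $\chi(\lambda)$.
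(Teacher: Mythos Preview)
Your argument is correct and reaches the stated identity, but it follows the dual route to the paper's. The paper fibers $\cR_1$ by congruence classes of $f$ modulo $\ell$ and modulo $g$, obtaining a combination of Hurwitz zeta values $\cZ(u;a,g\ell)$; it then applies the functional equation of $\cZ$ (Proposition~\ref{prop:Z_func}), opens the resulting $\cD_1(1/qu;\cdot)$ as a sum over $f$, and executes the $a_1,a_2$ sums inside to produce the Gauss sum and the congruence defining $\cZ_{\ov\chi}$. You instead use the Gauss-sum expansion of $\chi(f)$ to write $\cR_1$ as a combination of $\cD_1(u;\cdot/\ell g)$, apply the functional equation of $\cD_1$ (Proposition~\ref{prop:D1_func}), and then collapse the $a$-sum via CRT directly on the $\cZ(1/qu;\cdot,\ell g)$ side. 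Your path is slightly more economical: the CRT step on $\cZ$ is a one-line substitution, whereas the paper has to open $\cD_1$ before it can sum over $a_1,a_2$. Both approaches are natural mirrors of each other, reflecting the duality between Propositions~\ref{prop:Z_func} and~\ref{prop:D1_func}.
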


\begin{proof}

From the definition of $\cR_1$, it follows that
\[ \cR_1\Big(u;\chi,\frac{c}{g}\Big) = \sum_{a_1 \pmod \ell} \sum_{a_2\pmod g} \chi(a_1) e\Big(\frac{a_2c}{g}\Big) \sum_{\substack{f \equiv a_1 \pmod \ell \\ f \equiv a_2 \pmod g}} u^{\deg f}, \]
by fibering the sum over $f \in \cM$ according to the congruence class of $f$ modulo $g$ (say $a_1$) and $\ell$ (say $a_2$) respectively. 

Now, let $\ov{g}$ represent any fixed multiplicative inverse of $g$ modulo $\ell$ (and the same for $\ov{\ell}$ with roles reversed). These are well-defined as $(\ell,g) = 1$. By the Chinese Remainder Theorem, if we set
\begin{equation} \label{eqn:aCRT} a  : = a_1 g\ov{g} + a_2 \ell \ov{\ell}, \end{equation}
where $a$ depends on $a_1$ and $a_2$, then the congruence conditions in the right-most sum above may be replaced by $f \equiv a \pmod{g\ell}$. Doing so and recalling the definition of $\cZ$ \eqref{zu} yields the identity
\[ \cR_1\Big(u;\chi,\frac{c}{g}\Big) = \sum_{a_1 \pmod \ell} \sum_{a_2\pmod g} \chi(a_1) e\Big(\frac{a_2c}{g}\Big) \cZ(u;a,g\ell). \]
Inserting the functional equation of $\cZ$ above (cf. \eqref{eqn:Z_func}), we find the following expression for $\cR_1(u;\chi,c/g)$:
\[ \sum_{a_1 \pmod \ell} \sum_{a_2\pmod g} \chi(a_1) e\bigg(\frac{a_2c}{g}\bigg) \sum_{\lambda \in \Fq^\times} \bigg[ \frac{e_q(\lambda^{-1})}{q} - \frac{1}{q(1-qu)}\bigg]u^{\deg(g\ell)-1} \cD_1\Big(\frac{1}{qu},\frac{\lambda a}{g\ell}\Big). \]

We now open the definition of $\cD_1$ as a sum over $f$ and push the sums over $a_1$ and $a_2$ inside. Recalling the definition of $a$ from \eqref{eqn:aCRT} and focusing only on the terms depending on $a_1$ and $a_2$, we find 
\[ \sum_{a_1 \pmod{\ell}} \sum_{a_2\pmod{g}} \chi(a_1) e\bigg(\frac{a_2c}{g} + \frac{\lambda a_2\ov{\ell} f}{g} + \frac{\lambda a_1 \ov{g} f}{\ell}\bigg). \]
We can perform the summation over $a_2$ using orthogonality of characters. This yields the congruence condition $f \equiv - c\lambda^{-1} \ell \pmod g$ and contributes a factor of $|g| = q^{\deg(g)}$. On the other hand, closing the sum over $a_1$ yields a Gauss sum
\[ G(\chi,\lambda \ov{g} f) = \chi(\lambda^{-1})\chi(g)\ov{\chi}(f) G(\chi), \]
where we have used the primitivity of $\chi$. From here, we get that
\begin{multline*} \cR_1\Big(u;\chi,\frac{c}{g}\Big) =  \chi(g)G(\chi)(qu)^{\deg(g)-1} u^{\deg(\ell)} \sum_{\lambda\in\Fq^{\times}} \chi(\lambda^{-1})\bigg[e_q(-\lambda) - \frac{1}{1-qu}\bigg] \\ \times \sum_{\substack{f \in \cM \\ f \equiv -\lambda^{-1}c\ell \pmod{g}}} \ov{\chi}(f) (qu)^{-\deg f}. \end{multline*}
Here it is worth remarking that we have consolidated the powers of $q$ arising from the congruence sums with the extra factor inside the $\lambda$-sum. Making the change of variables $\lambda \mapsto -\lambda^{-1}$ and closing the sum over $f$, we obtain 
\begin{multline*} \cR_1\Big(u;\chi,\frac{c}{g}\Big) = \chi(g)G(\chi)(qu)^{\deg(g)-1} u^{\deg(\ell)} \\\times\sum_{\lambda\in\Fq^\times} \chi(-\lambda)\bigg[e_q(\lambda^{-1}) - \frac{1}{1-qu}\bigg] \cZ_{\ov\chi}\Big(\frac{1}{qu};\lambda c\ell, g\Big), \end{multline*}
as desired. 
\end{proof}
\begin{remark} \label{rem:eta_sum_for_R1}
It is worth noting that the term in square brackets in \eqref{eqn:R1_func} is the same as in Remark~\ref{rem:eta_sum_for_D1} and so may be written as
\[ \sum_{\eta\in\{0,1\}} (-1)^{\eta} \frac{e_q\big((1-\eta)\lambda^{-1}\big)}{(1-qu)^{\eta}}.\] 
\end{remark}

The case $g = 1$ of the above lemma is particularly interesting, since it gives an alternate proof of the functional equation for a primitive Dirichlet $L$-function, $\cL(u,\chi)$, which is similar to Hurwitz's original proof of the functional equation in the integer setting (see the discussion in Davenport \cite[pp. 65, 71-72]{davenport}). 
\kommentar{\begin{corollary} \label{cor:L_func}
Let $\ell \in \cM$ and $\chi$ be a primitive Dirichlet character modulo $\ell$. Further, denote the Dirichlet $L$-function associated to $\chi$ by 
\[ L(s,\chi) = \sum_{f \in \cM} \frac{\chi(f)}{|f|^s}, \qquad \cL(u,\chi) = \sum_{f \in \cM} \chi(f) u^{\deg f}, \]
so that $\cL(u,\chi) = L(s,\chi)$ under the standard substitution $u = q^{-s}$. Then, if $\chi$ is odd,
\[ \cL(u,\chi) = \omega(\chi)(\sqrt{q}u)^{\deg(\ell) - 1} \cL\Big(\frac{1}{qu},\ov{\chi}\Big), \]
while if $\chi$ is even, 
\[ \cL(u,\chi) = \omega(\chi)(\sqrt{q}u)^{\deg(\ell) - 2} \frac{1-u}{1-\frac{1}{qu}} \cL\Big(\frac{1}{qu},\ov{\chi}\Big). \]
Here, the ``sign" of the functional equation $\omega(\chi)$ is a unimodular complex number which can be written as
\[ \omega(\chi) = \begin{cases} q^{-\deg(\ell)/2} G(\chi) & \text{ if } \chi \text{ is even}, \\ \frac{1}{\tau(\chi)} q^{-(\deg(\ell)-1)/2} G(\chi) & \text{ if } \chi \text{ is odd.}\end{cases} \]
in terms of the Gauss sums in \eqref{def:Gauss_sum_over_FqT} and \eqref{def:tau_over_Fq}.
\end{corollary}

\begin{proof}
We simply set $c = g = 1$ in Lemma~\ref{lem:R1_func}. Then, $\cR_1(u;\chi,1) = \cL(u,\chi)$ and $\cZ_{\ov{\chi}}(u;\lambda\ell,1) = \cL(u,\ov{\chi})$. Thus, it remains to simplify the identity
\begin{equation} \label{eqn:Dirichlet_stepone} \cL(u,\chi) = \cL\Big(\frac{1}{qu},\chi\Big) G(\chi) (qu)^{-1} u^{\deg(\ell)} \sum_{\lambda \in \Fq^\times} \chi(-\lambda)\bigg[e_q(\lambda^{-1}) - \frac{1}{1-qu}\bigg]. \end{equation}

When $\chi$ is odd, the second term in the sum over $\lambda$ disappears due to orthogonality of multiplicative characters of $\Fq$. On the other hand, the first term is
\[ \sum_{\lambda\in\Fq^\times} \chi(-\lambda) e_q(\lambda^{-1}) = \sum_{\lambda\in\Fq^\times} \chi(\lambda^{-1}) e_q(-\lambda)  = \ov{\sum_{\lambda\in\Fq^\times} \chi(\lambda) e_q(\lambda)} = \ov{\tau(\chi)}.\]
Thus, \eqref{eqn:Dirichlet_stepone} becomes, after some rearrangement,
\[ \cL(u,\chi) = \frac{G(\chi)}{q^{(\deg(\ell)-1)/2}} \cdot \frac{\ov{\tau(\chi)}}{q} \cdot (\sqrt{q}u)^{\deg(\ell)-1}\cL\Big(\frac{1}{qu},\chi\Big). \]
The claim follows by noting that $|\tau(\chi)|^2 = q$ when $\chi$ is odd, whence $\tau(\chi) = q/\ov{\tau(\chi)}$. 

When $\chi$ is even, the sum over $\lambda$ in \eqref{eqn:Dirichlet_stepone} yields
\[ -1 - \Big(\frac{q-1}{1-qu}\Big) = \frac{1}{u} \times \frac{1-u}{1-\frac{1}{qu}}.\]
Thus, on rearranging \eqref{eqn:Dirichlet_stepone}, we get
\[ \cL(u,\chi) = \frac{G(\chi)}{q^{\deg(\ell)/2}} \cdot (\sqrt{q}u)^{\deg(g) - 2} \frac{1-u}{1-\frac{1}{qu}} \cL\Big(\frac{1}{qu},\ov{\chi}\Big), \]
as desired. \end{proof}
}

The stage is now set for proving a functional equation for $\cR_2(u;\chi,c/g)$ when $(\ell,g) = 1$.
\begin{proposition} \label{prop:R2_func_coprime} Let $g \in \cM_{\geqs 1}$, $\ell \in \cM$, and $c\in \Fq[T]$ be such that $(\ell c,g) = 1$. Further, let $\chi$ be a primitive Dirichlet character modulo $\ell$. Then, for all $u \in \C$, we have that
\begin{equation} \label{eqn:R2_func_coprime} \cR_2\Big(u;\chi,\frac{c}{g}\Big) =\chi(g)G(\chi)(qu^2)^{\deg(g)-1} u^{\deg(\ell)} \sum_{\lambda \in \Fq^\times} \cB_{\lambda}(u;\ov{\chi})\cR_2\Big(\frac{1}{qu};\ov{\chi},\frac{\lambda\ov{c}}{g}\Big),\end{equation}
where 
\begin{equation} \begin{split} \label{def:B} \cB_\lambda(u;\chi) &:= \sum_{\eta_1,\eta_2 \in \{0,1\}} (-1)^{\eta_1+\eta_2}\frac{\Kl_{\chi}\big(\eta_1-1,(1-\eta_2)\lambda\big)}{q(1-qu)^{\eta_1+\eta_2}} \\ &= \frac{\Kl_{\chi}(-1,\lambda)}{q} - \bigg[\frac{\tau(\chi,-1) + \tau(\ov{\chi},\lambda)}{q(1 - qu)}\bigg] + \frac{q-1}{q(1-qu)^2} \1(\chi \textup{ is even}).\end{split} \end{equation}
Here, $G(\chi)$, $\Kl_\chi$, $\tau(\chi,\cdot)$ are as in \eqref{def:Gauss_sum_over_FqT}, \eqref{def:chi_Kl_over_Fq}, \eqref{def:tau_f_over_Fq}.
\end{proposition}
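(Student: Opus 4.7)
The plan is to adapt the proof of Proposition~\ref{prop:D2_func}, replacing the functional equation of $\cD_1$ (Proposition~\ref{prop:D1_func}) with that of $\cR_1$ (Lemma~\ref{lem:R1_func}). First, I establish analogues of the two identities in Lemma~\ref{lem:D2_simple}. Expanding $r_\chi = 1 \star \chi$ as a double sum in the definition of $\cR_2$ and fibering over the congruence class of one of the factors modulo $g$ yields
\begin{equation*}
\cR_2\Big(u;\chi,\frac{c}{g}\Big) = \sum_{a \pmod g} \cZ(u;a,g)\, \cR_1\Big(u;\chi,\frac{ca}{g}\Big).
\end{equation*}
Fibering over the other factor gives the identity $\cR_2(u;\ov\chi, c'/g) = \sum_a \cZ_{\ov\chi}(u; a, g)\,\cD_1(u; c'a/g)$; substituting $a \mapsto ac\ell$ (valid since $(c\ell,g)=1$) converts this into the dual form
\begin{equation*}
\cR_2\Big(u;\ov\chi,\frac{b\ov{c\ell}}{g}\Big) = \sum_{a \pmod g} \cZ_{\ov\chi}(u; ac\ell, g)\, \cD_1\Big(u;\frac{ab}{g}\Big)
\end{equation*}
for any $b \in \Fq[T]$, where $\ov{c\ell}$ denotes any inverse of $c\ell$ modulo $g$.

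Next, I apply the functional equations of $\cZ$ (Proposition~\ref{prop:Z_func}) and $\cR_1$ (Lemma~\ref{lem:R1_func}) to the two factors in the first decomposition. This introduces two spectral parameters $\lambda_1,\lambda_2 \in \Fq^\times$ with kernel factors expressible (via Remarks~\ref{rem:eta_sum_for_Z}~and~\ref{rem:eta_sum_for_R1}) as sums over $\eta_1,\eta_2 \in \{0,1\}$. After pulling out the constants $\chi(g)G(\chi)(qu^2)^{\deg(g)-1}u^{\deg(\ell)}$ from the $\cR_1$ functional equation (together with the power of $u^{\deg(g)-1}$ from the $\cZ$ functional equation) and swapping the order of summation so that the sum over $a \pmod g$ is innermost, the dual identity above collapses this inner sum to $\cR_2(1/(qu); \ov\chi, \lambda_1\lambda_2^{-1}\ov{c\ell}/g)$, in direct analogy with the reduction in the proof of Proposition~\ref{prop:D2_func}.

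Finally, I reparameterize via $\lambda = \lambda_1\lambda_2^{-1}$, placing the sum over $\lambda$ outermost. Combined with the character factor $\chi(-\lambda_2) = \chi(-1)\chi(\lambda_1)\ov\chi(\lambda)$ inherited from the $\cR_1$ functional equation, the inner sum over $\lambda_1$ decomposes into four terms indexed by $(\eta_1,\eta_2)$, each of which reduces via the definition \eqref{def:chi_Kl_over_Fq} and the identities \eqref{eqn:Kl0a_orKla0}--\eqref{eqn:Klchi00} to a single twisted Kloosterman or Gauss sum. The main technical obstacle is the character bookkeeping needed to match $\cB_\lambda(u;\ov\chi)$ as defined in \eqref{def:B}: this relies on the identity $\chi(-1)\ov\chi(\lambda)\Kl_\chi(-1,\lambda) = \Kl_{\ov\chi}(-1,\lambda)$ (verified by the substitution $\gamma \mapsto \lambda\gamma^{-1}$ in the defining sum), together with the analogous identities for the $\tau$-terms obtained from the relations $\tau(\chi,a) = \ov\chi(a)\tau(\chi)$ and $\chi(-1)^2 = 1$. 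These identifications complete the proof of~\eqref{eqn:R2_func_coprime}.
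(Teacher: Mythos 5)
Your strategy is the same as the paper's own proof: decompose $\cR_2$ through the two identities analogous to Lemma~\ref{lem:D2_simple}, insert the functional equations of $\cZ$ and of $\cR_1$ (Lemma~\ref{lem:R1_func}), collapse the sum over $a \pmod g$ with the dual identity, and reparametrize the two unit parameters. The kernel bookkeeping you sketch can be pushed through: your identity $\chi(-1)\ov\chi(\lambda)\Kl_\chi(-1,\lambda)=\Kl_{\ov\chi}(-1,\lambda)$ is correct and handles the nondegenerate term, while the three degenerate $(\eta_1,\eta_2)$ terms indeed work out via the $\tau$-relations you mention (though it is cleaner to close the inner unit sum in one step by the substitution $\gamma\mapsto$ the negative inverse of the summation variable, which produces $\Kl_{\ov\chi}\big(\eta_1-1,(1-\eta_2)\lambda\big)$ directly, exactly as in the paper's proofs of Propositions~\ref{prop:D2_func}~and~\ref{prop:R2_func_coprime}).

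The genuine gap is at the final step. Lemma~\ref{lem:R1_func}, applied with $c$ replaced by $ac$, yields $\cZ_{\ov\chi}\big(\tfrac{1}{qu};\lambda\, ac\,\ell,g\big)$; you carry this factor $\ell$ faithfully, and your dual identity therefore collapses the $a$-sum to $\cR_2\big(\tfrac{1}{qu};\ov\chi,\lambda_1\lambda_2^{-1}\ov{c\ell}/g\big)$. But \eqref{eqn:R2_func_coprime} asserts the dual argument is $\lambda\ov{c}/g$, not $\lambda\,\ov{c\ell}/g$, and these classes differ modulo $g$ unless $\ell\equiv 1\pmod g$; since $\cR_2(v;\ov\chi,\cdot/g)$ depends on its numerator only through its residue class mod $g$, the discrepancy cannot be absorbed into $\cB_\lambda$. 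So your closing claim does not follow: what your argument establishes is the variant of \eqref{eqn:R2_func_coprime} with $\ov{c\ell}$ in place of $\ov{c}$. Note that the paper's own proof writes $\cZ_{\ov\chi}\big(\tfrac{1}{qu};\lambda_1 ac,g\big)$ at the corresponding step, i.e.\ it drops the factor $\ell$ coming from Lemma~\ref{lem:R1_func}, which is the only reason it arrives at $\ov{c}$. Either that omission is a slip and the proposition should be stated with $\ov{c\ell}$ — in which case you needed to flag the mismatch rather than assert \eqref{eqn:R2_func_coprime} — or the $\ell$ must be eliminated by an argument you have not supplied. As written, your proof does not establish the identity in the form stated.
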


\begin{proof}
For $(\ell c,g) = 1$ and any $b \in \FqT$, the identities
\begin{equation} \label{eqn:R2_simple1} \cR_2\Big(u;\chi,\frac{c}{g}\Big) = \sum_{a \pmod{g}} \cZ(u;a,g) \cR_1\Big(u;\chi,\frac{ac}{g}\Big) \end{equation}
and
\begin{equation} \label{eqn:R2_simple2} \cR_2\Big(u;\chi, \frac{b\ov{c}}{g}\Big) = \sum_{a \pmod{g}} \cZ_\chi(u;ac,g) \cD_1\Big(u,\frac{ab}{g}\Big) \end{equation}
can be shown to hold entirely analogously to Lemma~\ref{lem:D2_simple}. Substituting the functional equations for $\cZ$ and $\cR_1$ (i.e., \eqref{eqn:Z_func} and \eqref{eqn:R1_func}) into the right hand side of \eqref{eqn:R2_simple1}, we get
\begin{multline*} \cR_2\Big(u;\chi, \frac{c}{g}\Big) = \chi(g)G(\chi)(qu^2)^{\deg(g)-1} u^{\deg(\ell)} \\ \times \sum_{a\pmod{g}}\Bigg(\sum_{\lambda_2 \in \Fq^\times} \bigg[\sum_{\eta_2 \in \{0,1\}} (-1)^{\eta_2} \frac{e_q\big((\eta_2-1)\lambda_2\big)}{q(1-qu)^{\eta_2}}\bigg] \cD_1\bigg(\frac{1}{qu};\frac{\lambda_2 a}{g}\bigg) \\\times \sum_{\lambda_1 \in \Fq^\times}\chi(-\lambda_1)\bigg[\sum_{\eta_1\in\{0,1\}} (-1)^{\eta_1} \frac{e_q\big((1-\eta_1)\lambda_1^{-1}\big)}{(1-qu)^{\eta_1}}\bigg]  \cZ_{\ov{\chi}}\bigg(\frac{1}{qu};\lambda_1ac,g\bigg) \Bigg).\end{multline*}
Here, we have incorporated Remark~\ref{rem:eta_sum_for_Z} and Remark~\ref{rem:eta_sum_for_R1}. Using \eqref{eqn:R2_simple2}, we can execute the sum over $a \pmod g$ to get 
\begin{multline*} \cR_2\Big(u;\chi, \frac{c}{g}\Big) = \chi(g)G(\chi)(qu^2)^{\deg(g)-1} u^{\deg(\ell)} \sum_{\lambda_1,\lambda_2 \in \Fq^\times} \chi(-\lambda_1) \\ \times \bigg[\sum_{\eta_1,\eta_2 \in \{0,1\}} (-1)^{\eta_1+\eta_2} \frac{e_q\big((1-\eta_1)\lambda_1^{-1}+(\eta_2-1)\lambda_2\big)}{q(1-qu)^{\eta_1+\eta_2}}\bigg] \cR_2\Big(\frac{1}{qu};\ov{\chi},\frac{\lambda_1^{-1}\lambda_2\ov{c}}{g}\Big) .\end{multline*}

Letting $\lambda = \lambda_1^{-1} \lambda_2$, writing the sums over $(\lambda_1,\lambda_2)$ as a sum over $(\lambda,\lambda_1)$, and pushing the sum over $\lambda_1$ inside we find that the right hand side above transforms into
\begin{multline*} \chi(g)G(\chi)(qu^2)^{\deg(g)-1} u^{\deg(\ell)} \sum_{\lambda \in \Fq^\times} \cR_2\Big(\frac{1}{qu};\ov{\chi},\frac{\lambda\ov{c}}{g}\Big)  \bigg[\sum_{\eta_1,\eta_2 \in \{0,1\}} \frac{(-1)^{\eta_1+\eta_2}}{q(1-qu)^{\eta_1+\eta_2}}\bigg] \\ \times \sum_{\lambda_1 \in \Fq^\times} \chi(-\lambda_1) e_q\big((1-\eta_1)\lambda_1^{-1}+(\eta_2-1)\lambda\lambda_1\big).\end{multline*}

To finish the proof of \eqref{eqn:R2_func_coprime}, we need to show that the sum over $\lambda_1$ above is equal to 
\[ \Kl_{\ov{\chi}}\big(\eta_1-1,(1-\eta_2)\lambda\big) = \sum_{\gamma \in \Fq^\times} \ov{\chi}(\gamma) e_q\big((\eta_1-1)\gamma + (1-\eta_2)\lambda \gamma^{-1}\big),\]
which can be seen by substituting $\lambda_1 = -\gamma^{-1}$ and by noting that $\chi(\gamma^{-1}) = \ov{\chi}(\gamma)$; in this way, we find that the sum over $\eta_1,\eta_2$ in the above expression gives $\cB(u;\ov{\chi})$ as desired.

It remains to show the second equality in \eqref{def:B}. Thus,
\begin{equation*} \begin{split} 
\cB_\lambda(u;\chi) 
& = \frac{\Kl_\chi(-1,\lambda)}{q} - \frac{\Kl_\chi(-1,0)}{q(1-qu)} - \frac{\Kl_\chi(0,\lambda)}{q(1-qu)} + \frac{\Kl_{\chi}(0,0)}{q(1-qu)^2} \\
& =  \frac{\Kl_\chi(-1,\lambda)}{q} - \frac{\tau(\chi,-1)}{q(1-qu)} - \frac{\tau(\ov{\chi},\lambda)}{q(1-qu)} + \frac{q-1}{q(1-qu)^2}\1(\chi \text{ is even}), 
\end{split} \end{equation*}
by \eqref{def:tau_f_over_Fq}, \eqref{eqn:Klchi0b}, and \eqref{eqn:Klchi00}.
\end{proof}

\begin{remark}
Notice that if $\chi$ is even, then $\tau(\chi,\lambda) = -1$ for $\lambda \neq 0$. Thus, we find that whenever $\chi$ is even, $\cB_\lambda(u;\chi) = \cA_\lambda(u)$ for every $\lambda \in \Fq^\times$.
\end{remark}

\begin{remark}
The above proposition remains valid when $\ell = 1$. In this case, we see that there is only one primitive character modulo $\ell$ (namely, the constant function $f \mapsto 1$). Thus, $\cR_2(u;1,c/g) = \cD_2(u;c/g)$ and in view of the previous remark, Proposition~\ref{prop:R2_func_coprime} simply becomes Proposition~\ref{prop:D2_func}.
\end{remark}

We finish off this section by establishing a version of Proposition~\ref{prop:R2_func_coprime} in the alternate situation where $\ell$ divides $g$. 

\begin{proposition} \label{prop:R2_func_not_coprime} Let $g \in \cM_{\geqs 1}$, $\ell \in \cM$, and $c\in \Fq[T]$ be such that $\ell \mid g$ and $(c,g) = 1$. Further, let $\chi$ be a primitive Dirichlet character modulo $\ell$. Then, for all $u \in \C$, we have that
\[ \cR_2\Big(u;\chi,\frac{c}{g}\Big) = \ov{\chi}(-c) (qu^2)^{\deg(g) - 1} \sum_{\lambda\in \Fq^\times} \cB_\lambda(u;\chi) \cR_2\Big(\frac{1}{qu};\chi,\frac{\lambda\ov{c}}{g}\Big),\]
where $\cB_\lambda(u;\chi)$ is the same as in \eqref{def:B}. 
\end{proposition}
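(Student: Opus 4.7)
The plan is to follow the strategy of Proposition~\ref{prop:R2_func_coprime}, but to exploit the fact that $\ell \mid g$ to rewrite $\cR_2$ in terms of $\cZ$ and $\cD_1$ (rather than $\cZ$ and $\cR_1$), so that the functional equation only requires Propositions~\ref{prop:Z_func} and~\ref{prop:D1_func}.

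First, for absolutely convergent $|u| < 1/q$, I would unfold $r_\chi = 1 \star \chi$ and fiber the resulting double sum by the congruence class of the ``$\chi$-variable'' modulo $g$. Since $\ell \mid g$, the character $\chi$ is well-defined modulo $g$ (with the convention $\chi(a)=0$ when $(a,\ell) \neq 1$), giving the identity
\[ \cR_2\Big(u;\chi,\frac{c}{g}\Big) = \sum_{a \pmod g} \chi(a) \cZ(u;a,g) \cD_1\Big(u;\frac{ac}{g}\Big), \]
which is the analogue of \eqref{eqn:R2_simple1} available in this regime. A completely analogous argument yields, for any $b \in \FqT$,
\[ \cR_2\Big(u;\chi,\frac{b}{g}\Big) = \sum_{a \pmod g} \chi(a) \cZ(u;a,g) \cD_1\Big(u;\frac{ab}{g}\Big). \]

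Next, I would insert the functional equations \eqref{eqn:Z_func} and \eqref{eqn:D1_func} (together with Remarks~\ref{rem:eta_sum_for_Z} and~\ref{rem:eta_sum_for_D1}) to get dual variables $\lambda_1,\lambda_2 \in \Fq^\times$ and produce the inner sum
\[ \sum_{a \pmod g} \chi(a) \cZ\Big(\frac{1}{qu};\lambda_2 ac,g\Big) \cD_1\Big(\frac{1}{qu};\frac{\lambda_1 a}{g}\Big). \]
Substituting $a = a'\ov{\lambda_2 c}$ (which is valid since $(c,g) = 1$ and $\lambda_2 \in \Fq^\times$) pulls out a factor $\ov\chi(\lambda_2 c)$ and refolds this as $\ov\chi(\lambda_2 c)\cR_2(1/(qu);\chi,\lambda_1 \lambda_2^{-1}\ov c / g)$.

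The final step is to reorganize the sum over $(\lambda_1,\lambda_2)$ via $\lambda = \lambda_1 \lambda_2^{-1}$ (pushing the $\lambda_2$-sum inside) and to recognize the resulting inner Kloosterman-type sum as $\ov\chi(-1)\cB_\lambda(u;\chi)$. Explicitly, after the change of variables, the inner sum becomes
\[ \sum_{\eta_1,\eta_2\in\{0,1\}} \frac{(-1)^{\eta_1+\eta_2}}{q(1-qu)^{\eta_1+\eta_2}} \sum_{\lambda_2 \in \Fq^\times} \ov\chi(\lambda_2) e_q\bigl((\eta_1-1)\lambda\lambda_2 + (1-\eta_2)\lambda_2^{-1}\bigr), \]
and the substitution $\lambda_2 = -\gamma^{-1}$ inside (using $\ov\chi(\gamma^{-1}) = \chi(\gamma)$) together with a relabeling $\eta_1 \leftrightarrow \eta_2$ yields precisely $\ov\chi(-1)\cB_\lambda(u;\chi)$ from \eqref{def:B}. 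Combining the factor $\ov\chi(c)$ from the refolding with $\ov\chi(-1)$ gives the claimed prefactor $\ov\chi(-c)$.

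The only real obstacle is the bookkeeping in this last step: one must carefully track the character factors produced by each substitution, and recognize that the ``wrong-looking'' Kloosterman sum $\Kl_{\ov\chi}((\eta_1-1)\lambda,1-\eta_2)$ coming naively from the inner sum symmetrizes under $\lambda_2 \mapsto -\lambda_2^{-1}$ and the swap $\eta_1 \leftrightarrow \eta_2$ to produce $\ov\chi(-1)\Kl_\chi(\eta_1-1,(1-\eta_2)\lambda)$. Everything else is a direct analogue of the proof of Proposition~\ref{prop:R2_func_coprime}, so the identity of functions of $u$ initially established for $|u|<1/q$ extends to all $u \in \C$ by meromorphic continuation.
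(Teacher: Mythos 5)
Your proposal is correct and follows essentially the same route as the paper: decompose $\cR_2(u;\chi,c/g)=\sum_{a}\chi(a)\cZ(u;a,g)\cD_1(u;ac/g)$ using $\ell\mid g$, apply the functional equations of $\cZ$ and $\cD_1$, refold via the inverse of $c$, and recognize the twisted Kloosterman sum $\ov\chi(-1)\Kl_\chi$ giving $\cB_\lambda(u;\chi)$. The only cosmetic differences are that you perform the substitution $a\mapsto a'\ov{\lambda_2 c}$ on the fly instead of quoting the prepared identity \eqref{eqn:R2_simple4}, and your labeling of the dual variables forces the harmless $\eta_1\leftrightarrow\eta_2$ relabeling, both of which check out.
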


\begin{proof}

By opening $r_\chi(f) = \sum_{f_1 f_2 = f} \chi(f_2)$ and fibering the sum according to the congruence class of $f_2$ modulo $g$, one finds that
\begin{equation} 
\label{eqn:R2_simple3}
  \cR_2\Big(u;\chi,\frac{c}{g}\Big) = \sum_{a \pmod{g}} \chi(a)  \cZ(u;a,g)\cD_1\Big(u;\frac{ac}{g}\Big).  
\end{equation}  
Similarly, for $(c,g) = 1$,
\begin{equation}
\label{eqn:R2_simple4}
 \chi(\ov{c}) \cR_2\Big(u;\chi,\frac{b\ov{c}}{g}\Big) = \sum_{a \pmod{g}} \chi(a) \cZ(u;ac,g) \cD_1\Big(u;\frac{ab}{g}\Big).  
\end{equation}
Here, $\ov{c}$ is the multiplicative inverse of $c$ modulo $g$ (and hence, also modulo $\ell$). These are analogous to \eqref{eqn:R2_simple1} and \eqref{eqn:R2_simple2}, but depend greatly on the fact that $\ell \mid g$.

Applying the functional equations for $\cD_1$ and $\cZ$ (cf. Propositions~\ref{prop:Z_func}~and~\ref{prop:D1_func}), the right hand side of \eqref{eqn:R2_simple3} becomes
\begin{multline*} (qu^2)^{\deg(g) - 1} \sum_{a\pmod{g}} \Bigg( \chi(a) \sum_{\lambda_1,\lambda_2 \in \Fq^\times}\sum_{\eta_1,\eta_2 \in \{0,1\}} (-1)^{\eta_1+\eta_2} \frac{e_q\big((1-\eta_1)\lambda_1^{-1}+(\eta_2-1)\lambda_2\big)}{q(1-qu)^{\eta_1+\eta_2}} \\ \times \cZ\Big(\frac{1}{qu};\lambda_1 ac, g\Big)\cD_1\Big(\frac{1}{qu};\frac{\lambda_2 a}{g}\Big) \Bigg). \end{multline*}
Executing the sum over $a \bmod g$ using \eqref{eqn:R2_simple4}, we get
\begin{multline*} \cR_2\Big(u;\chi,\frac{c}{g}\Big) = (qu^2)^{\deg(g) - 1} \sum_{\lambda_1,\lambda_2 \in \Fq^\times}\sum_{\eta_1,\eta_2 \in \{0,1\}} (-1)^{\eta_1+\eta_2} \\ \times \frac{e_q\big((1-\eta_1)\lambda_1^{-1}+(\eta_2-1)\lambda_2\big)}{q(1-qu)^{\eta_1+\eta_2}} \times \chi(\lambda_1^{-1} \ov{c}) \cR_2\Big(\frac{1}{qu};\chi,\frac{\lambda_1^{-1} \lambda_2\ov{c}}{g}\Big). \end{multline*}
Now, we follow the usual strategy of letting $\lambda = \lambda_1^{-1}\lambda_2$ and replacing sums over $(\lambda_1,\lambda_2)$ by $(\lambda,\lambda_1)$. In particular, we see that the sum over $\lambda_1$ is given by 
\[ \sum_{\lambda_1 \in \Fq^\times} \chi(\lambda_1^{-1}) e_q\big((1-\eta_1)\lambda_1^{-1}+(\eta_2-1)\lambda\lambda_1\big) = \chi(-1) \Kl_\chi\big(\eta_1-1,(1-\eta_2)\lambda), \]
by making the substitution $\lambda_1 = -\gamma^{-1}$ and recalling \eqref{def:chi_Kl_over_Fq}. Thus, we find
\begin{equation*} \cR_2\Big(u;\chi,\frac{c}{g}\Big) = \ov{\chi}(-c) (qu^2)^{\deg(g) - 1} \sum_{\lambda\in Fq^\times} \cB_\lambda(u;\chi) \cR_2\Big(\frac{1}{qu};\chi,\frac{\lambda\ov{c}}{g}\Big), \end{equation*}
as desired.
\end{proof}

\begin{remark} As with Proposition~\ref{prop:R2_func_coprime}, one can set $\ell = 1$ in Proposition~\ref{prop:R2_func_not_coprime} in which case we get back Proposition~\ref{prop:D2_func}.
\end{remark}

\section{Voronoi summation formulae} \label{sec:voronoi}

In this section, we shall derive the requisite Voronoi summation formulae from the functional equations we found in \S\ref{sec:func}. The reader may wish to compare the formulae to Theorems~4.13 and 4.14 in \cite[Chapter 4]{iwanieckowalski}. 

\begin{proposition} \label{prop:Rvoronoi_coprime}
Let $g \in \cM_{\leqs n/2}$,  $\ell \in \cM$, and $c \in \Fq[T]$ be such that $(\ell c,g) = 1$ and let $r_\chi$ be as in \eqref{def:rchi} with $\chi$ a primitive Dirichlet character modulo $\ell$. Then,
\begin{multline} \label{eqn:Rvoronoi_coprime} \sum_{f \in \cM_n} r_\chi(f)e\Big(\frac{cf}{g}\Big) = \scrM_n\\ + \frac{q^n}{|g\ell|}\chi(g)G(\chi) \sum_{\lambda \in \Fq^\times} \sum_{\eta_1,\eta_2 \in \{0,1\}} \sum_{0\leqs k \leqs \mu} b_{\mu-k}(\ov \chi) \sum_{f \in \cM_k} r_{\ov{\chi}}(f)e\Big(\frac{\lambda\ov{c}f}{g}\Big) \end{multline}
where $\mu = 2\deg(g) +\deg(\ell) - n - 2$,
\begin{equation} \label{def:Rvoronoi_main_term} \scrM_n = - \res_{u = 1/q} \bigg(\frac{\cR_2(u;\chi,c/g)}{u^{n+1}}\bigg),\end{equation}
$b_k = b_k(\chi;\lambda,\eta_1,\eta_2)$ is given by
\begin{equation}
    b_k(\chi;\lambda,\eta_1,\eta_2) = (-1)^{k-\eta_1-\eta_2}\Kl_\chi\big(\eta_1-1,(1-\eta_2)\lambda\big) \times \Big[\1_{k \geqs \eta_1 + \eta_2} \binom{-\eta_1 - \eta_2}{k - \eta_1 - \eta_2}\Big].\label{bk}
\end{equation}
Notice that when $\mu< 0$, the second term in \eqref{eqn:Rvoronoi_coprime} is simply 0. 

\end{proposition}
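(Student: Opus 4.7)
The plan is to combine Perron's formula \eqref{perron} with the functional equation of Proposition~\ref{prop:R2_func_coprime}, through a contour deformation and a change of variables. For any $r_1 < 1/q$, Perron gives
\[
\sum_{f \in \cM_n} r_\chi(f) e\Big(\frac{cf}{g}\Big) = \frac{1}{2\pi i}\oint_{|u|=r_1}\frac{\cR_2(u;\chi,c/g)}{u^{n+1}}\,du.
\]
The functional equation \eqref{eqn:R2_func_coprime} furnishes a meromorphic continuation of $\cR_2(u;\chi,c/g)$ to all of $\C$, with singularities coming only from the $(1-qu)^{-k}$ factors in $\cB_\lambda$. Hence there is a unique pole (at $u=1/q$) in the annulus $r_1 \leqs |u| \leqs r_2$ for any $r_2 > 1$, and shifting the contour outward contributes the residue term $\scrM_n$ of \eqref{def:Rvoronoi_main_term}.

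On $|u|=r_2$, substitute \eqref{eqn:R2_func_coprime} and simplify the total power of $u$ in the integrand to $u^{\mu-1}$, where $\mu = 2\deg(g)+\deg(\ell)-n-2$. Then change variables $v=1/(qu)$, with $du = -dv/(qv^2)$; the contour $|u|=r_2$ maps (with preserved orientation, after sign cancellation) to $|v|=1/(qr_2) < 1/q$, which lies inside the absolute-convergence disc of $\cR_2(v;\ov\chi,\lambda\ov c/g)$.

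Under this substitution, $1-qu = -(1-v)/v$, so
\[
q\,\cB_\lambda\bigl(1/(qv);\ov\chi\bigr) = \sum_{\eta_1,\eta_2\in\{0,1\}} \Kl_{\ov\chi}\bigl(\eta_1-1,(1-\eta_2)\lambda\bigr)\, v^{\eta_1+\eta_2}(1-v)^{-(\eta_1+\eta_2)}.
\]
Expanding $(1-v)^{-(\eta_1+\eta_2)}$ by the generalized binomial theorem and reindexing, the coefficient of $v^{k'}$ in $q\cB_\lambda(1/(qv);\ov\chi)$ is exactly $\sum_{\eta_1,\eta_2\in\{0,1\}} b_{k'}(\ov\chi;\lambda,\eta_1,\eta_2)$ as defined in \eqref{bk}. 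Multiplying by $v^{-\mu-1}$ and by the power series expansion of $\cR_2(v;\ov\chi,\lambda\ov c/g)$ about $v=0$, the coefficient of $v^{-1}$ (which the contour extracts) is $\sum_{k'+m=\mu} b_{k'}(\ov\chi)\sum_{f\in\cM_m} r_{\ov\chi}(f) e(\lambda\ov c f/g)$. Reindexing $k = \mu - k'$ and bundling the constants via $\deg(g) - \mu - 2 = n-\deg(g\ell)$ assembles the right-hand side of \eqref{eqn:Rvoronoi_coprime}; when $\mu < 0$, this sum is empty, giving the noted vanishing of the dual term.

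The main difficulty is careful bookkeeping: tracking the signs from $(-1)^{\eta_1+\eta_2}$ in \eqref{def:B}, the orientation under $u \mapsto 1/(qu)$, and the various powers of $q$, so that the prefactor collapses exactly to $\chi(g)G(\chi)q^n/|g\ell|$. The meromorphicity check in the first step is read off directly from the functional equation, and the Taylor expansion in the third step is mechanical once the binomial identity is identified.
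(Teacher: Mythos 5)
Your argument is correct and is essentially the paper's own proof: Perron's formula, a contour shift past the unique pole at $u=1/q$ producing $\scrM_n$, an application of Proposition~\ref{prop:R2_func_coprime} on the outer contour, the substitution $v=1/(qu)$, the binomial expansion of $\cB_\lambda$ giving the coefficients $b_k$, and a final coefficient extraction (the paper's second use of Perron), with the same power-of-$q$ bookkeeping collapsing to $\chi(g)G(\chi)q^n/|g\ell|$. One small caveat: the fact that the continued $\cR_2(u;\chi,c/g)$ has no pole besides $u=1/q$ (in particular none at $u=1$, which your outer radius $r_2>1$ crosses) is better justified from the rational expressions \eqref{eqn:R2_simple1} together with Lemma~\ref{lem: DZ simple} than ``read off'' from the functional equation, whose right-hand side contains the dual factor $\cR_2\big(\tfrac{1}{qu};\ov{\chi},\lambda\ov{c}/g\big)$ with an apparent pole at $u=1$ that cancels only after summing over $\lambda$.
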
 

\begin{proof} 
Let 
\[ \scrI(\sigma,n) = \frac{1}{2\pi i}\oint_{|u|=q^{-\sigma}} \cR_2\Big(u;\chi,\frac{c}{g}\Big) \frac{du}{u^{n+1}}, \] 
where the contour $|u| = q^{-\sigma}$ is being traversed counter-clockwise. Note that the integrand has poles only at $u = 1/q$ (coming from $\cR_2$) and $u = 0$, whence two applications of Cauchy's residue theorem give that
\[ \scrI(-1,n) = - \scrM_n + \scrI(2,n). \]
On the other hand, Perron's formula \eqref{perron} tells us that
\[ \scrI(2,n) = \sum_{f \in \cM_n} r_\chi(f)e\Big(\frac{cf}{g}\Big). \]
Thus, it suffices to show that 
\[ \scrI(-1,n) = \frac{q^n}{|g\ell|}\chi(g)G(\chi) \sum_{\lambda \in \Fq^\times} \sum_{\eta_1,\eta_2 \in \{0,1\}} \sum_{0\leqs k \leqs \mu} b_{\mu-k}(\ov \chi) \sum_{f \in \cM_k} r_{\ov{\chi}}(f)e\Big(\frac{\lambda\ov{c}f}{g}\Big). \]

To do this, we apply Proposition~\ref{prop:R2_func_coprime} to the integrand of $\scrI(-1,n)$. On doing so, and pulling the sum over $\lambda$ out, we obtain that
\[ \scrI(-1,n) = \frac{q^{\deg(g) - 1}\chi(g)G(\chi)}{2\pi i}\sum_{\lambda \in \Fq^\times} \oint_{|u|=q} \cB_\lambda(u;\ov{\chi})\cR_2\Big(\frac{1}{qu};\ov{\chi},\frac{\lambda\ov{c}}{g}\Big)\frac{du}{u^{n-2\deg(g)-\deg(\ell) + 3}}. \]
We now substitute $u = 1/(qv)$ and recall the definition of $\mu$, finding that
\begin{multline*} \scrI(-1,n) = q^{n - \deg(g) - \deg(\ell) + 1}\chi(g)G(\chi) \\ \times \frac{1}{2\pi i} \sum_{\lambda \in \Fq^\times} \oint_{|v|=q^{-2}} \cB_\lambda\Big(\frac{1}{qv};\ov{\chi}\Big)\cR_2\Big(v;\ov{\chi},\frac{\lambda\ov{c}}{g}\Big)\frac{dv}{v^{\mu+1}}, \end{multline*}
where we have consolidated the factors of $q$ coming from the change of variables together with the existing factor $q^{\deg(g) - 1}$. Now, by substituting $u = 1/(qv)$ in \eqref{def:B} and simplifying, we obtain
\begin{equation*} \begin{split} \cB_\lambda\Big(\frac{1}{qv};\ov{\chi}\Big) & = \sum_{\eta_1,\eta_2 \in \{0,1\}} \frac{\Kl_{\overline{\chi}}\big(\eta_1-1,(1-\eta_2)\lambda\big)}{q} \times \frac{v^{\eta_1 + \eta_2}}{(1-v)^{\eta_1 + \eta_2}} \\ & = \frac{1}{q}\sum_{\eta_1,\eta_2 \in \{0,1\}} \sum_{k=0}^\infty b_k (\ov \chi) v^k. \end{split} \end{equation*} 
The last step follows from the binomial theorem. Substituting this above, we get 
\[ \scrI(-1,n) = \frac{q^n}{|g\ell|}\chi(g)G(\chi) \sum_{\lambda\in\Fq^\times}\sum_{\eta_1,\eta_2\in\{0,1\}}\sum_{k=0}^\infty \frac{b_k(\ov \chi) }{2\pi i}\oint_{|v|=q^{-2}} \cR_2\Big(v;\ov{\chi},\frac{\lambda \ov{c}}{g}\Big) \frac{dv}{v^{\mu - k + 1}}. \]
The inner integral vanishes if $k > \mu$, whence the sum over $k$ can be truncated to $k \leqs \mu$. Then, using Perron's formula to evaluate the integral, we get 
\[ \scrI(-1,n) = \sum_{\lambda\in\Fq^\times}\sum_{\eta_1,\eta_2\in\{0,1\}}\sum_{0\leqs k \leqs \mu} b_k(\ov \chi) \bigg[\frac{q^n}{|g\ell|}\chi(g)G(\chi)\sum_{f\in \cM_{\mu-k}} r_{\ov{\chi}}(f)e\Big(\frac{\lambda\ov{c}f}{g}\Big)\bigg], \]
completing the proof.
\end{proof}

We will need an explicit evaluation of the term $\scrM_n$ in the above proposition. 
\begin{lemma}
\label{residue}
Under the same assumptions as Proposition~\ref{prop:Rvoronoi_coprime} and the additional assumption that $\chi$ is nonprincipal (equivalently, that $\ell \in \cM_{\geqs 1}$) we have that
\[ \scrM_{n} = \frac{q^n}{|g|}\chi(g)L(1,\chi), \]
where $\scrM_n$ is as in \eqref{def:Rvoronoi_main_term} and $L(s,\chi)$ is the Dirichlet $L$-function associated to $\chi$.
\end{lemma}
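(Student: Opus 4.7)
The plan is to evaluate $\scrM_n$ by computing $\res_{u=1/q}\cR_2(u;\chi,c/g)$ directly and then using that $u^{n+1}$ is holomorphic and nonzero at $u=1/q$. To access the residue, I invoke the decomposition \eqref{eqn:R2_simple1},
$$\cR_2\Big(u;\chi,\frac{c}{g}\Big) = \sum_{a \pmod g} \cZ(u;a,g) \, \cR_1\Big(u;\chi,\frac{ac}{g}\Big).$$
By \eqref{eqn: Z simple}, each factor $\cZ(u;a,g)$ has a simple pole at $u=1/q$ with residue $-1/(q|g|)$, independent of $a$. The partner factor $\cR_1(u;\chi,ac/g)$ is holomorphic at $u=1/q$ for every $a$: since $(\ell,g)=1$, the Chinese Remainder Theorem lets me write $\cR_1(u;\chi,ac/g) = \sum_{b \pmod \ell} \chi(b)\, \cZ(u;a_{a,b},g\ell)$ for appropriate $a_{a,b}$, and the $-1/(q|g\ell|)$ residue contributions cancel once summed against $\chi(b)$ by the nonprincipality of $\chi$.

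Given this, the residue of the product is simply
$$\res_{u=1/q} \cR_2\Big(u;\chi,\frac{c}{g}\Big) = -\frac{1}{q|g|}\sum_{a \pmod g} \cR_1\Big(1/q;\chi,\frac{ac}{g}\Big).$$
The main step is to evaluate the $a$-sum. Working initially in $|u|<1/q$ where absolute convergence permits reordering, and then extending meromorphically, I push the $a$-sum inside and apply orthogonality of additive characters:
$$\sum_{a \pmod g} \cR_1\Big(u;\chi,\frac{ac}{g}\Big) = \sum_{f \in \cM} \chi(f)\, u^{\deg f} \sum_{a \pmod g} e\Big(\frac{acf}{g}\Big) = |g| \sum_{\substack{f \in \cM\\ g \mid f}} \chi(f)\, u^{\deg f},$$
using that $(c,g)=1$. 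Substituting $f = g f'$, this equals $|g|\,\chi(g)\, u^{\deg g}\,\cL(u,\chi)$, which at $u=1/q$ evaluates to $\chi(g)\, L(1,\chi)$.

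Combining the pieces gives $\res_{u=1/q}\cR_2(u;\chi,c/g) = -\chi(g)\, L(1,\chi)/(q|g|)$. Since $u^{n+1}$ contributes a factor of $q^{n+1}$ when pulled outside the residue, I obtain
$$\res_{u=1/q}\frac{\cR_2(u;\chi,c/g)}{u^{n+1}} = -\frac{q^n\,\chi(g)\, L(1,\chi)}{|g|},$$
and hence $\scrM_n = q^n\,\chi(g)\, L(1,\chi)/|g|$, as claimed. The only mildly subtle point is the holomorphicity of $\cR_1$ at $u=1/q$, but this follows transparently from the CRT decomposition and nonprincipality; everything else is a direct computation.
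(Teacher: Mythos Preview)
Your proof is correct and structurally the same as the paper's: decompose $\cR_2$ so that one factor $\cZ(u;a,g)$ carries the simple pole at $u=1/q$ while the $\chi$-twisted partner is holomorphic there by CRT plus nonprincipality, then evaluate the sum over residues via orthogonality of additive characters. The paper uses the two-variable expansion $\cR_2=\sum_{a,b\pmod g} e(abc/g)\,\cZ_\chi(u;b,g)\,\cZ(u;a,g)$ and shows $\cZ_\chi(u;b,g)$ is a polynomial, whereas you invoke the already-proved identity \eqref{eqn:R2_simple1} and work with $\cR_1$; your route is marginally more economical but otherwise identical.

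One imprecision to fix: your CRT expression $\cR_1(u;\chi,ac/g)=\sum_{b\pmod\ell}\chi(b)\,\cZ(u;a_{a,b},g\ell)$ is not literally correct. The decomposition used in the proof of Lemma~\ref{lem:R1_func} is a \emph{double} sum over $b\pmod\ell$ and $a'\pmod g$, weighted by $\chi(b)\,e(a'\cdot ac/g)$. This does not damage your holomorphicity claim, since the residue of each $\cZ(u;\cdot,g\ell)$ at $u=1/q$ is $-1/(q|g\ell|)$ independently of the residue class, and the $b$-sum $\sum_{b\pmod\ell}\chi(b)=0$ still kills the pole; but the sentence as written should be corrected.
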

\begin{proof}
Collecting the terms $f_1, f_2$ according to their congruence classes $a$ and $b$ modulo $g$ and applying \eqref{eqn: Z simple}, we obtain
     \begin{align}
         \mathcal{R}_2\Big(u;\chi,\frac{c}{g}\Big)& = \sum_{f_1, f_2 \in \mathcal{M}} \chi(f_1) e \Big( 
 \frac{cf_1f_2}{g} \Big) u^{\deg(f_1)+\deg(f_2)} \nonumber \\
  &= \sum_{a, b \pmod g} e \Big( \frac{abc}{g} \Big) \mathcal{Z}_{\chi}(u;b,g) \mathcal{Z}(u;a,g)\nonumber\\
 &= \sum_{a, b \pmod g} e \Big( \frac{abc}{g} \Big) \mathcal{Z}_{\chi}(u;b,g) \Big[  \frac{u^{\deg(g)}}{1-qu}+  \1_{a \in \mathcal{M}}u^{\deg(a)}\Big]. \label{zchi}
     \end{align}
If we now focus on the term $\mathcal{Z}_{\chi}(u;b,g)$ and again collect terms according to their congruences modulo $g \ell$, we have, by applying \eqref{eqn: Z simple} again, 
     \begin{align}
         \mathcal{Z}_{\chi}(u;b,g) &= \sum_{\substack{r \pmod{g\ell} \\ r \equiv b \pmod g}} \chi(r) \sum_{f \equiv r \pmod{g \ell}} u^{\deg(f)}\nonumber \\
         &= \sum_{\substack{r \pmod{g\ell} \\ r \equiv b \pmod g}} \chi(r) \Big[  \frac{u^{\deg(g \ell)}}{1-qu} +\1_{r \in \mathcal{M}} u^{\deg(r)} \Big]\nonumber \\
         &= \frac{u^{\deg(g \ell)}}{1-qu} \sum_{a \pmod \ell} \chi(a) \sum_{\substack{r \pmod{g \ell}\\ r \equiv b \pmod g \\ r \equiv a \pmod{\ell}}} 1 + \sum_{\substack{r \pmod{g\ell} \\ r \equiv b \pmod g\\ r \in \mathcal{M}}} \chi(r) u^{\deg(r)} \label{eq:Zchi0}\\
         &= \sum_{\substack{r \pmod{g\ell} \\ r \equiv b \pmod g\\ r \in \mathcal{M}}} \chi(r) u^{\deg(r)}. \label{eq:Zchi}
     \end{align}
Note that in equation \eqref{eq:Zchi0} we used the fact      that there is a unique solution $r \pmod{g \ell}$ such that $r \equiv b \pmod g$ and $r \equiv a \pmod \ell$, since $(\ell,g)=1$, which implies that the sum over $r$ is equal to $1$, and then the sum over $a \pmod{\ell}$ vanishes by orthogonality of characters. Replacing \eqref{eq:Zchi} in \eqref{zchi}, dividing by $u^{n+1}$, and computing the residue at $u=1/q$ yields 
     \begin{align*}
\scrM_n = \frac{q^n}{|g|} \sum_{a,b \pmod g} e\Big(  \frac{abc}{g}\Big) \sum_{\substack{r \pmod{g\ell} \\ r \equiv b \pmod g\\ r \in \mathcal{M}}}  \frac{\chi(r)}{|r|}. 
     \end{align*}
It remains to evaluate the above sums. We have
  \begin{align*}
      \sum_{a,b \pmod g} & e\Big(  \frac{abc}{g}\Big) \sum_{\substack{r \pmod{g\ell} \\ r \equiv b \pmod g\\ r \in \mathcal{M}}}  \frac{\chi(r)}{|r|} = \sum_{\substack{r \pmod{g \ell} \\ r \in \mathcal{M}}} \frac{\chi(r)}{|r|} \sum_{a \pmod g}
 e \Big(  \frac{acr}{g} \Big)\\
 &= |g| \sum_{\substack{r \pmod{g \ell} \\ r \in \mathcal{M}\\ r \equiv 0 \pmod g}} \frac{\chi(r)}{|r|} =  \chi(g) L(1,\chi).
 \end{align*}  
 Combining the last two equations above, the conclusion follows.
 \end{proof}

\begin{lemma}
\label{residue_d}
Under the same assumptions as Proposition~\ref{prop:Rvoronoi_coprime} and the additional assumption that $\ell = 1$ (equivalently $\chi$ is principal), we have that
\[ \scrM_{n} = \frac{q^n}{|g|} (n+1- 2\deg(g)), \]
where $\scrM_n$ is as in \eqref{def:Rvoronoi_main_term}.
\end{lemma}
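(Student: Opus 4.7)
The plan is to proceed analogously to the proof of Lemma~\ref{residue}, but account for the fact that when $\ell = 1$, the Dirichlet character $\chi$ is principal and $\cZ_\chi = \cZ$, which changes the pole structure of $\cR_2(u;\chi,c/g) = \cD_2(u;c/g)$ at $u = 1/q$ from a simple pole to a double pole.

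The first step is to invoke the identity
\[ \cD_2\Big(u;\frac{c}{g}\Big) = \sum_{a,b \pmod g} e\Big(\frac{abc}{g}\Big)\,\cZ(u;a,g)\,\cZ(u;b,g), \]
which follows by fibering both $f_1$ and $f_2$ modulo $g$ in the definition of $\cD_2$ (this is the $\ell = 1$ specialization of the displayed identity in the proof of Lemma~\ref{residue}, using that $\cZ_1 = \cZ$). Next, I would substitute the explicit formula \eqref{eqn: Z simple} for each of the two factors $\cZ(u;a,g)$ and $\cZ(u;b,g)$, and expand the resulting product into four terms.

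Of the four terms, I expect only one to contribute to the residue. Specifically: (i) the cross term involving the product of two $u^{\deg g}/(1-qu)$ factors carries the double pole at $u = 1/q$, and using $(c,g) = 1$ together with additive orthogonality, the sum $\sum_{a,b \pmod g} e(abc/g)$ collapses to $|g|$ (only the pair $a \equiv b \equiv 0 \pmod g$ survives); (ii) each of the two mixed terms vanishes, because they require a residue $a$ (or $b$) with $\deg a < \deg g$ which is both monic and divisible by $g$, an empty condition since $0 \notin \cM$; (iii) the last term is a polynomial in $u$ and is therefore holomorphic at $u = 1/q$. Consequently,
\[ \cD_2\Big(u;\frac{c}{g}\Big) = \frac{|g|\,u^{2\deg(g)}}{(1-qu)^2} + H(u), \]
where $H(u)$ is holomorphic at $u = 1/q$.

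The final step is a direct residue computation: writing $\phi(u) = |g|\,u^{2\deg(g) - n - 1}$ and using $(1-qu)^2 = q^2(u - 1/q)^2$, the residue at $u = 1/q$ of $\phi(u)/(1-qu)^2$ equals $\phi'(1/q)/q^2 = |g|(2\deg(g) - n - 1)\,q^{n - 2\deg(g)}$. Since $|g| = q^{\deg(g)}$, this simplifies to $(2\deg(g) - n - 1)\,q^{n - \deg(g)}$. Negating yields
\[ \scrM_n = \frac{q^n}{|g|}\bigl(n + 1 - 2\deg(g)\bigr), \]
as desired. The only delicate point is verifying the vanishing of the two mixed cross terms; once that is in place, the rest is a mechanical residue calculation at a double pole.
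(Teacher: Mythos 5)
Your argument is correct, and it takes a slightly different route than the paper. The paper's proof starts from the asymmetric identity \eqref{eqn:D2_simple}, $\cD_2(u;c/g)=\sum_{a\,(\mathrm{mod}\,g)}\cZ(u;a,g)\,\cD_1(u;ac/g)$, substitutes the closed form \eqref{eqn: Z simple} only for the $\cZ$-factor, and therefore ends up with \emph{two} polar contributions at $u=1/q$: a double-pole term $u^{\deg(g)}/(1-qu)^2$ and a simple-pole term $\frac{u^{\deg(g)}}{1-qu}\sum_{a\neq 0}\cD_1(u;a/g)$, whose residue requires the extra evaluation $\sum_{a\neq 0}\cD_1(1/q;a/g)=\sum_{a\neq 0}\big(e_q(\sgn(a))+\deg(g)-\deg(a)-1\big)=-\deg(g)$; combining the two pieces in \eqref{eq:Mn} gives the stated formula. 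You instead start from the symmetric double-fibering identity $\cD_2(u;c/g)=\sum_{a,b}e(abc/g)\cZ(u;a,g)\cZ(u;b,g)$ (the identity the paper records in the remark after Proposition~\ref{prop:D2_func} as Estermann's original strategy), substitute \eqref{eqn: Z simple} twice, and use additive orthogonality together with $(c,g)=1$ to show that the entire polar part collapses into the single term $|g|\,u^{2\deg(g)}/(1-qu)^2$: your key points check out, namely $\sum_{a,b\,(\mathrm{mod}\,g)}e(abc/g)=|g|$, the mixed terms vanish because the inner character sum forces $g\mid a$ (resp.\ $g\mid b$) while the representative has degree $<\deg(g)$ and $0\notin\cM$, and the fourth term is a polynomial. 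Your double-pole residue computation then reproduces $\scrM_n=\frac{q^n}{|g|}(n+1-2\deg(g))$, in agreement with the paper. What your approach buys is that the orthogonality step replaces the paper's explicit character-sum evaluation $\sum_{a\neq0}\cD_1(1/q;a/g)=-\deg(g)$, so the residue is taken at one term only; the paper's route has the mild advantage of reusing the decomposition \eqref{eqn:D2_simple} already set up for the functional equation. Both proofs are of comparable length and rigor.
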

\begin{proof}
Recalling that $ \mathcal{R}_2(u;\chi,c/g)=\mathcal{D}_2(u;c/g)$ when $\ell=1$ and applying \eqref{eqn:D2_simple},  we have that
\begin{align*}
    \mathcal{D}_2 \Big( u; \frac{c}{g}\Big) &= \sum_{a \pmod g} \mathcal{Z}(u;a,g) \mathcal{D}_1 \Big(  u ; \frac{ac}{g}\Big) \\
    &= \sum_{a \pmod g} \Big[ \frac{u^{\deg(g)}}{1-qu}+ \1_{a\in \mathcal{M}} u^{\deg(a)} \Big] \mathcal{D}_1\Big(  u ; \frac{ac}{g}\Big) \\
    &= \frac{u^{\deg(g)}}{(1-qu)^2} + \frac{u^{\deg(g)}}{1-qu}\sum_{\substack{a \pmod g \\ a \neq 0}}  \mathcal{D}_1\Big(  u ; \frac{ac}{g}\Big) + \sum_{\substack{ a \pmod g \\ a \in \mathcal{M}}} u^{\deg(a)} \mathcal{D}_1\Big(  u ; \frac{ac}{g}\Big)\\
    &= \frac{u^{\deg(g)}}{(1-qu)^2} + \frac{u^{\deg(g)}}{1-qu}\sum_{\substack{a \pmod g \\ a \neq 0}}  \mathcal{D}_1\Big(  u ; \frac{a}{g}\Big) + \sum_{\substack{ a \pmod g \\ a \in \mathcal{M}}} u^{\deg(a)} \mathcal{D}_1\Big(  u ; \frac{ac}{g}\Big),
\end{align*}
where we used Lemma \ref{lem: DZ simple} and we made a change of variables in the last line. Using Lemma \ref{lem: DZ simple} again, it follows that $\mathcal{D}_1(u;ac/g)$ is a polynomial in $u$, so the third term in the expression above is a polynomial and there is no pole at $u=1/q$ in that case. It follows that
\begin{align}
    \scrM_n &=- \res_{u = 1/q} \bigg( \frac{u^{\deg(g)}}{(1-qu)^2} + \frac{u^{\deg(g)}}{1-qu}\sum_{\substack{a \pmod g \\ a \neq 0}}  \mathcal{D}_1\Big(  u ; \frac{a}{g}\Big)\bigg) \frac{1}{u^{n+1}} \nonumber\\
    &= ( n+1-\deg(g)) \frac{q^n}{|g|} + \frac{q^n}{|g|} \sum_{\substack{a \pmod g \\ a \neq 0}}  \mathcal{D}_1\Big(  \frac{1}{q}; \frac{a}{g}\Big).\label{eq:Mn}
\end{align}
Now using Lemma \ref{lem: DZ simple} again,
\begin{align*}
 \sum_{\substack{a \pmod g \\ a \neq 0}}  \mathcal{D}_1\Big(  \frac{1}{q}; \frac{a}{g}\Big)  & = \sum_{\substack{a \pmod g \\ a \neq 0}}  \Big(  e_q (\sgn(a)) + \deg(g) - \deg(a)-1\Big) \\
 &= -\sum_{k=0}^{\deg(g)-1} q^k +(\deg(g)-1) (|g|-1) - (q-1) \sum_{k=0}^{\deg(g)-1}  k q^k\\
 &= - \deg(g).
\end{align*}
Combining the above equation with \eqref{eq:Mn}, it follows that
\begin{align*}
\scrM_n  = \frac{q^n}{|g|} (n+1-2\deg(g)).
\end{align*}

\end{proof}

We also have the following Proposition.

\begin{proposition} \label{prop:Rvoronoi_notcoprime}
Let $g \in \cM_{\geqs 1}$, $\ell \in \cM$ such that $\ell\not = 1$ and  $\ell|g$, and $c \in \Fq[T]$ be such that $(c,g) = 1$ and let $r_\chi$ be as in \eqref{def:rchi} with $\chi$ a primitive Dirichlet character modulo $\ell$. Then,
\begin{multline*} 
\sum_{f \in \cM_n} r_\chi(f)e\Big(\frac{cf}{g}\Big) = \frac{q^n}{|g|}G(\chi) \ov \chi(c) L(1, \ov \chi)\\ + \frac{q^{n}}{|g|} \ov \chi(-c)  \sum_{\lambda\in\Fq^\times}\sum_{\eta_1,\eta_2\in\{0,1\}} \sum_{0 \leqs k \leqs \nu} b_{\nu-k}(\chi) \sum_{f \in \mathcal{M}_{k}} r_{\chi}(f) e \Big(  \frac{ \lambda \ov c f}{g}\Big),
\end{multline*}
where $\nu=2\deg(g) -n-2$, and the coefficients $b_k(\chi)$ are defined in equation \eqref{bk}.
\end{proposition}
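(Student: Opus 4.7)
The plan is to follow the same strategy as Proposition~\ref{prop:Rvoronoi_coprime}, but with Proposition~\ref{prop:R2_func_not_coprime} in place of Proposition~\ref{prop:R2_func_coprime}, and with a new calculation for the main (residue) term to replace Lemma~\ref{residue}.

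First, I would set
\[ \scrI(\sigma, n) := \frac{1}{2\pi i} \oint_{|u| = q^{-\sigma}} \cR_2\Big(u; \chi, \frac{c}{g}\Big) \frac{du}{u^{n+1}}, \]
and apply Cauchy's residue theorem between the contours $|u| = q^{-2}$ and $|u| = q$, noting that the only singularity of the integrand in that annulus is at $u = 1/q$. This gives $\scrI(-1, n) = -\scrM_n + \scrI(2, n)$, where $\scrI(2, n) = \sum_{f \in \cM_n} r_\chi(f) e(cf/g)$ by Perron's formula \eqref{perron}. In $\scrI(-1, n)$, I would then apply Proposition~\ref{prop:R2_func_not_coprime}, substitute $u = 1/(qv)$ to move the contour to $|v| = q^{-2}$, and expand $\cB_\lambda(1/(qv); \chi)$ as a power series in $v$ via the binomial theorem (just as in the proof of Proposition~\ref{prop:Rvoronoi_coprime}). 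This expansion produces exactly the coefficients $b_k(\chi; \lambda, \eta_1, \eta_2)$ of \eqref{bk}. Closing the inner contour integral by Perron yields $\sum_{f \in \cM_{\nu - k}} r_\chi(f) e(\lambda\ov{c}f/g)$, and the sum over $k$ truncates at $k \leqs \nu$ since higher powers of $v$ integrate to zero. Tracking the prefactors (the $\ov\chi(-c)$ in front of the functional equation, and $(qu^2)^{\deg(g) - 1}$ combined with the Jacobian from the substitution and the $1/q$ hidden in $\cB_\lambda(1/(qv); \chi)$) delivers the second term in the stated formula, with overall factor $\ov\chi(-c) q^n / |g|$.

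The main obstacle is the explicit calculation of $\scrM_n$. The argument of Lemma~\ref{residue} does not carry over because it used $(\ell, g) = 1$ via the CRT step producing $\cZ_{\chi}(u;b,g) = \sum_{a \pmod \ell} \chi(a)\1_{\cdots}$ whose inner sum vanished by orthogonality. In the present case, since $\ell \mid g$, one has instead $\cZ_{\chi}(u; b, g) = \chi(b) \cZ(u; b, g)$, so
\[ \cR_2\Big(u; \chi, \frac{c}{g}\Big) = \sum_{a, b \pmod g} \chi(b) e\Big(\frac{abc}{g}\Big) \cZ(u; a, g)\cZ(u; b, g). \]
I would expand each $\cZ$ via Lemma~\ref{lem: DZ simple} and show that the ostensible double pole at $u = 1/q$ vanishes: its coefficient is $\sum_{a, b} \chi(b) e(abc/g)$, which after summing $a$ forces $b \equiv 0 \pmod g$ and hence $\chi(b) = 0$ (because $\ell \mid g$). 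Similarly, the cross-term in which $\cZ(u; b, g)$ contributes the pole is killed by the same mechanism. The surviving simple pole comes from $\cZ(u; a, g)$'s polar piece paired with $\1_{b \in \cM} u^{\deg b}$, and its coefficient is $\sum_{b \in \cM_{<\deg g}} \chi(b) |b|^{-1} \sum_{a \pmod g} e(abc/g)$. The $a$-sum forces $b$ to be a multiple of $g$, which in a residue system only admits $b = 0$ -- so this route also gives zero. Correcting: the nonvanishing term is actually the one where $\cZ(u; a, g)$ carries the pole and one sums over $b$ weighted by $\chi(b)$; a Chinese Remainder decomposition $b = b_1 + \ell b_2$ forces (via the $b_2$-sum) $a = (g/\ell) a'$ with $a' \in \cM_{<\deg \ell}$ and turns the $b_1$-sum into a Gauss sum $\ov\chi(a'c) G(\chi)$ by primitivity. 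Summing over $a'$ and identifying the resulting truncated Dirichlet series with $\cL(1/q, \ov\chi) = L(1, \ov\chi)$ via the polynomial cap yields $\scrM_n = \frac{q^n}{|g|} G(\chi) \ov\chi(c) L(1, \ov\chi)$, matching the first term in the Proposition.

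Finally, combining the two pieces and reindexing $k \mapsto \nu - k$ in the Voronoi sum (to match the normalization in the statement) completes the proof. The delicate bookkeeping in the residue calculation -- in particular correctly tracking which cross-terms give double versus simple poles and which vanish by the $\chi(0) = 0$ mechanism -- is the portion requiring the most care.
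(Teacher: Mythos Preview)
Your overall strategy is exactly that of the paper: apply Perron, shift contours, invoke Proposition~\ref{prop:R2_func_not_coprime}, substitute $u=1/(qv)$, expand $\cB_\lambda(1/(qv);\chi)$ via the binomial theorem to produce the $b_k(\chi)$, and compute $\scrM_n$ directly from the double-sum representation
\[
\cR_2\Big(u;\chi,\frac{c}{g}\Big)=\sum_{a,b\pmod g}\chi(b)\,e\Big(\frac{abc}{g}\Big)\cZ(u;a,g)\cZ(u;b,g).
\]
The dual-term analysis and the final answer for $\scrM_n$ are both correct.

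However, your bookkeeping in the residue step has the two cross-terms swapped. With $\chi$ attached to $b$, the term in which $\cZ(u;b,g)$ carries the pole (so $b$ runs over \emph{all} residues and $a\in\cM_{<\deg g}$) is \emph{not} killed ``by the same mechanism'': summing $\sum_{b\pmod g}\chi(b)e(abc/g)$ is a Gauss-type sum, not an orthogonality relation forcing $b\equiv0$. This is in fact the \emph{surviving} term. Conversely, the term where $\cZ(u;a,g)$ carries the pole and $b\in\cM$ does vanish (as you then correctly discover), since summing freely over $a$ forces $b\equiv0$, which is incompatible with $b\in\cM$. Your ``Correcting:'' sentence still says ``$\cZ(u;a,g)$ carries the pole'' when you mean $\cZ(u;b,g)$; the CRT computation you then describe (write $b=b_1+\ell b_2$, sum over $b_2$ to force $a=(g/\ell)a'$ with $a'\in\cM_{<\deg\ell}$, close the $b_1$-sum as $G(\chi)\ov\chi(a'c)$, and identify $\sum_{a'}\ov\chi(a')/|a'|=L(1,\ov\chi)$) is correct \emph{for that other term}. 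Once you straighten out which cross-term is which, the argument goes through and matches the paper's proof line by line.
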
 
\begin{proof}
    We proceed as in the proof of Proposition \ref{prop:Rvoronoi_coprime}, and we get that
    \begin{align}
     \sum_{f \in \cM_n} r_\chi(f)e\Big(\frac{cf}{g}\Big) = \scrM_n + \scrI(-1,n),   \label{vr} 
    \end{align}
    where, as before,
   \[\scrM_n = - \res_{u = 1/q} \bigg(\frac{\cR_2(u;\chi,c/g)}{u^{n+1}}\bigg),\] and  
   \[  \scrI(-1,n) = \frac{1}{2\pi i}\oint_{|u|=q} \cR_2\Big(u;\chi,\frac{c}{g}\Big) \frac{du}{u^{n+1}} .\]
   Using Proposition \ref{prop:R2_func_not_coprime} in the integrand above, we have that
    \begin{align*}
    \scrI(-1,n) = \frac{1}{2\pi i}\oint_{|u|=q}      \ov{\chi}(-c) (qu^2)^{\deg(g) - 1} \sum_{\lambda\in \Fq^\times} \cB_\lambda(u;\chi) \cR_2\Big(\frac{1}{qu};\chi,\frac{\lambda\ov{c}}{g}\Big) \frac{du}{u^{n+1}}.
    \end{align*}
    Making the change of variables $u=1/(qv)$, we get that 
    \begin{align*}
 \scrI(-1,n) = q^{n+1-\deg(g)} \ov \chi(-c)   \frac{1}{2 \pi i}  \sum_{\lambda\in \Fq^\times}\oint_{|v|=q^{-2}} \cB_\lambda \Big(\frac{1}{qv};\chi\Big) \cR_2\Big(v;\chi,\frac{\lambda\ov{c}}{g}\Big) \frac{dv}{v^{\nu+1}},       
    \end{align*}
    where $\nu=2\deg(g)-n-2$. 
Similarly as before, we have
\[\cB_\lambda\Big(\frac{1}{qv};\chi \Big)=\frac{1}{q} \sum_{\eta_1,\eta_2 \in \{0,1\}} \sum_{k=0}^\infty b_k(\chi) v^k,\] so
\begin{align*}
  \scrI(-1,n) = q^{n+1-\deg(g)} \ov \chi(-c)  \sum_{\lambda\in\Fq^\times}\sum_{\eta_1,\eta_2\in\{0,1\}}\sum_{k=0}^\infty \frac{b_k(\chi)}{2\pi i}\oint_{|v|=q^{-2}} \cR_2\Big(v;\chi,\frac{\lambda \ov{c}}{g}\Big) \frac{dv}{v^{\nu - k + 1}}.
\end{align*}
Note that the integral above vanishes if $k \geqs \nu+1$, and using Perron's formula \eqref{perron} it follows that 
\begin{align}
  \scrI(-1,n) = q^{n+1-\deg(g)} \ov \chi(-c)  \sum_{\lambda\in\Fq^\times}\sum_{\eta_1,\eta_2\in\{0,1\}} \sum_{0 \leqs k \leqs \nu} b_{\nu-k}(\chi) \sum_{f \in \mathcal{M}_{k}} r_{\chi}(f) e \Big(  \frac{ \lambda \ov c f}{g}\Big). \label{j-1}  
\end{align}
Now we focus on the evaluation of $\scrM_n$.
  We rewrite
  \begin{align*}
      \mathcal{R}_2 \Big( u ;\chi, \frac{c}{g} \Big) &= \sum_{a , b \pmod g} \sum_{\substack{f_1 \equiv a \pmod g \\ f_2 \equiv b \pmod g}} \chi(f_1) e \Big( \frac{abc}{g}\Big) u^{\deg(f_1)+\deg(f_2)}  \\
      &= \sum_{a,b \pmod g} \chi(a) e \Big(  \frac{abc}{g} \Big) \Big[  \frac{u^{\deg(g)}}{1-qu}+ \1_{a \in \mathcal{M}} u^{\deg(a)} \Big]\Big[  \frac{u^{\deg(g)}}{1-qu} + \1_{b \in \mathcal{M}} u^{\deg(b)} \Big],
  \end{align*}
where we used the fact that $\ell|g$ and Lemma \ref{lem: DZ simple}. Computing the residue at $u=1/q$, it follows that
\begin{align}
    \scrM_n &= -\frac{q^n(2\deg(g)-n-1) }{|g|^2} \sum_{a , b \pmod g} \chi(a) e\Big( \frac{abc}{g}\Big)+\frac{q^n}{|g|}\sum_{\substack{a,b \pmod g \\ a \in \mathcal{M}}} \frac{\chi(a)}{|a|} e \Big(\frac{abc}{g} \Big) \nonumber  \\
    &+\frac{q^n}{|g|}\sum_{\substack{a,b \pmod g\\ b \in\mathcal{M}}} \frac{\chi(a)}{|b|} e \Big(\frac{abc}{g} \Big) . \label{mn}
\end{align}

 Now note that
 \[ \sum_{a,b \pmod g} \chi(a) e \Big(  \frac{abc}{g}\Big) = \sum_{a \pmod g} \chi(a) |g| \1_{a \equiv 0 \pmod g} = 0,\] where we again used the fact that $\ell|g$.
 
 We similarly get that the second term in \eqref{mn} vanishes. For the third term above, writing $a \equiv \alpha \pmod{\ell}$ and then $b=b_1 (g/\ell)$, we have 
 \begin{align*}
    \sum_{\substack{a , b \pmod g \\ b \in \mathcal{M}}} & \frac{\chi(a)}{|b|}  e \Big(  \frac{abc}{g}\Big) = \sum_{\alpha \pmod \ell} \chi(\alpha) \sum_{\substack{b \pmod g \\ b \in \mathcal{M}}} \frac{1}{|b|}  e \Big(  \frac{bc \alpha}{g} \Big)\sum_{a_1 \pmod{g/\ell}} e \Big( \frac{a_1 b   c}{g/\ell} \Big)  \\
    &=\Big|  \frac{g}{\ell} \Big| \sum_{\alpha \pmod \ell} \chi(\alpha) \sum_{\substack{b \pmod g \\ b \in \mathcal{M} \\ b \equiv 0 \pmod{g/\ell}}} \frac{1}{|b|}  e \Big(  \frac{bc \alpha}{g}\Big)  \\
    &= \sum_{\alpha \pmod \ell} \chi(\alpha) \sum_{\substack{b_1 \pmod{\ell} \\ b_1 \in \mathcal{M}}} \frac{1}{|b_1|} e \Big(  \frac{b_1 c \alpha }{\ell}\Big) = \sum_{\substack{b_1 \pmod{\ell} \\ b_1 \in \mathcal{M}}} \frac{1}{|b_1|} \sum_{\alpha \pmod \ell} \chi(\alpha)  e \Big(  \frac{b_1 c \alpha }{\ell}\Big)\\
    &= G(\chi) \ov \chi(c) \sum_{\substack{b_1 \pmod{\ell} \\ b_1 \in \mathcal{M}}} \frac{\ov\chi(b_1)}{|b_1|} = G(\chi) \ov \chi(c) L(1, \ov \chi).
 \end{align*}
 Using the two equations above in \eqref{mn}, we get that
 \[\scrM_n = \frac{q^n}{|g|}G(\chi) \ov \chi(c) L(1, \ov \chi). \]
 Using the above and equations \eqref{vr} and \eqref{j-1}, the conclusion follows.
 \end{proof}

 \section{Setting up the proof when \texorpdfstring{$\ell_2 \neq 1$}{ell2 is not 1}}
 \label{sec:setup}

In this section, we assume that $\ell_2 \neq 1$ (so $\chi_2$ is a non-principal character modulo $\ell_2$). Let $d = \max \{m,n\}$, where $m=\deg(h)$. 

Observe that
\begin{equation*} 
r_{\chi_2}(f+h) = \sum_{\substack{g \in \mathcal{M}_{\leqs [d/2]}\\
g \mid (f+h) }} \chi_2(g) + \sum_{\substack{g \in \mathcal{M}_{\leqs [(d-1)/2]}\\
g \mid (f+h)}} \chi_2\Big(\frac{f+h}{g}\Big). \end{equation*}
Using the above in the computation of the correlation, we find that
\[ \sum_{f \in \cM_n} r_{\chi_1}(f) r_{\chi_2}(f+h) = S_1 (\chi_1,\chi_2) + S_2 (\chi_1,\chi_2), \]
where 
\[S_1(\chi_1,\chi_2) = \sum_{f \in \mathcal{M}_n} r_{\chi_1}(f)\sum_{\substack{g \in \mathcal{M}_{\leqs [d/2]}\\g \mid (f+h)}} \chi_2(g), \] and
\[S_2(\chi_1,\chi_2) =\sum_{f \in \mathcal{M}_n} r_{\chi_1}(f) \sum_{\substack{g \in \mathcal{M}_{\leqs [(d-1)/2]}\\ g \mid (f+h)}} \chi_2\Big(\frac{f+h}{g}\Big). \]
From now on, for ease of notation, we will denote $S_i(\chi_1,\chi_2)$ by $S_i$ for $i=1,2$.

We rewrite 
\[ S_1 = \sum_{g \in \mathcal{M}_{\leqs  [d/2]}}  \chi_2(g) \sum_{\substack{f \in \cM_n\\ f \equiv -h \pmod g}} r_{\chi_1}(f).\] 
Detecting the congruence condition using additive characters modulo $g$, we get that 
\begin{equation*}
S_1 =\sum_{g \in \mathcal{M}_{\leqs  [d/2]}}  \frac{\chi_2(g)}{|g|} \sum_{b \pmod g} e \Big( \frac{bh}{g} \Big) \sum_{f \in \mathcal{M}_n} r_{\chi_1}(f) e \Big(  \frac{bf}{g}\Big).
\end{equation*}
We further rewrite $S_1$ as
\begin{equation}
    \label{s1initial}
    S_1 =  \sum_{g \in \mathcal{M}_{\leqs  [d/2]}}  \frac{\chi_2(g)}{|g|}  \sum_{g_1|g} \, \copsum_{b_1 \pmod{g_1}} e \Big( \frac{b_1h}{g_1} \Big) \sum_{f \in \mathcal{M}_n} r_{\chi_1}(f) e \Big(  \frac{b_1f}{g_1}\Big).
\end{equation}

For $S_2$, we interchange the $f$ and $g$ sums as before, and arrange the terms according to the congruence class modulo $\ell_2$ that $(f+h)/g$ takes. In particular, $(f+h)/g \equiv a \pmod {\ell_2}$ implies that $f \equiv -h + ag \pmod {g\ell_2}$.  This latter congruence entails the fact that $g \mid (f+h)$, so we omit the constraint. In doing so, we find that
\[ S_2 = \sum_{a \pmod{\ell_2}} \chi_2(a) \sum_{g \in \mathcal{M}_{\leqs [(d-1)/2]}} \sum_{\substack{f \in \cM_n \\ f \equiv -h + ag \pmod{g\ell_2}}} r_{\chi_1}(f). \]
Detecting the congruence using additive characters modulo $g\ell_2$, we find that
\[ S_2 = \frac{1}{|\ell_2|} \sum_{a \pmod{\ell_2}} \chi_2(a) \sum_{g \in \mathcal{M}_{\leqs [(d-1)/2]}} \frac{1}{|g|} \sum_{b \pmod{g\ell_2}} e\Big(\frac{bh -abg}{g\ell_2}\Big) \sum_{f \in \cM_n} r_{\chi_1}(f) e\Big(\frac{bf}{g\ell_2}\Big). \]
Focusing only on the sum over $a$, we observe that we have
\[\sum_{a\pmod{\ell_2}} \chi_2(a) e\Big(\frac{-ab}{\ell_2}
\Big) =  \ov{\chi_2}(-b) G(\chi_2). \]
  Note that this identity is valid even when $\ell_2 \mid b$.

Thus, we have obtained,
\begin{equation*} 
     S_2 = \frac{G(\chi_2)}{|\ell_2|}  \sum_{g \in \mathcal{M}_{\leqs  [(d-1)/2]}}  \frac{1}{|g|} \sum_{b \pmod{g\ell_2}} \ov{\chi_2}(-b) e\Big(\frac{bh}{g\ell_2}\Big) \sum_{f \in \cM_n} r_{\chi_1}(f) e\Big(\frac{bf}{g\ell_2}\Big). 
     \end{equation*}
    
To apply the functional equation of $\cR_2$ to the innermost sum, we need to reduce $\frac{b}{g\ell_2}$. To do this, first observe that $\chi_2(-b)$ ensures $\ell_2 \nmid b$. Thus, we must have
\[ \frac{b}{g\ell_2} = \frac{b_1}{g_1\ell_2}, \]
where $\ell_2 \nmid \frac{g}{g_1}$ (and the latter fraction is reduced). Further, for every reduced fraction of this type, there is exactly one choice of $b$ for which it occurs. Thus, we may rewrite the sum over $b\pmod{g\ell_2}$ as a sum over $g_1 \mid g$ satisfying $\ell_2 \nmid \frac{g}{g_1}$ and a sum over reduced residues $b_1 \pmod{g_1\ell_2}$. 

Upon making these substitutions, we find that
\[ S_2 =  \frac{G(\chi_2)}{|\ell_2|}  \sum_{g \in \mathcal{M}_{\leqs  [(d-1)/2]}}  \frac{1}{|g|} \sum_{\substack{g_1 \mid g\\\ell_2 \nmid \frac{g}{g_1}}} \copsum_{b_1 \pmod{g_1\ell_2}} \ov{\chi_2}\Big(-\frac{b_1g}{g_1}\Big) e\Big(\frac{b_1h}{g_1\ell_2}\Big) \sum_{f \in \cM_n} r_{\chi_1}(f) e\Big(\frac{b_1f}{g_1\ell_2}\Big). \]
By multiplicativity, we have $\chi_2(-b_1g/g_1) = \chi_2(-b_1) \chi_2(g/g_1)$. In particular, we see that we can drop the constraint $\ell_2 \nmid \frac{g}{g_1}$ from the sum over $g_1$. This gives
\begin{multline}
\label{s2}
S_2 =  \frac{G(\chi_2)}{|\ell_2|}  \sum_{g \in \mathcal{M}_{\leqs  [(d-1)/2]}}  \frac{1}{|g|} \sum_{g_1 \mid g} \ov{\chi_2}\Big(\frac{g}{g_1}\Big)\copsum_{b_1 \pmod{g_1\ell_2}} \ov{\chi_2}(-b_1) e\Big(\frac{b_1h}{g_1\ell_2}\Big) \\\times \sum_{f \in \cM_n} r_{\chi_1}(f) e\Big(\frac{b_1f}{g_1\ell_2}\Big). 
\end{multline}

\subsection{The case \texorpdfstring{$\ell_1 \neq 1$}{ell1 is not 1}}
Here, we assume that $\ell_1 \neq 1$. We first rewrite $S_1$ as follows. In equation \eqref{s1initial}, we apply the Voronoi summation formulas as in Propositions \ref{prop:Rvoronoi_coprime} and \ref{prop:Rvoronoi_notcoprime}. 
 In this case, we write
\[S_1 = S_{11}+S_{12},\] where $S_{11}$ corresponds to the sum over $\ell_1 \nmid g_1$ and $S_{12}$ corresponds to the sum over $\ell_1 | g_1$. Applying the Voronoi summation formula from Proposition \ref{prop:Rvoronoi_coprime} and Lemma \ref{residue}, we write
\begin{align*}
    S_{11} &= M_{11}+E_{11},
\end{align*}
where 
\begin{align}
\label{m11}
    M_{11} &= q^n L(1,\chi_1) \sum_{g \in \mathcal{M}_{\leqs  [d/2]}}  \frac{\chi_2(g)}{|g|}  \sum_{\substack{g_1|g}} \frac{\chi_1(g_1)}{|g_1|} \copsum_{b_1 \pmod{g_1}} e \Big( \frac{b_1h}{g_1} \Big) ,
\end{align}
and
\begin{align}
\label{e11}
    E_{11} &=\frac{q^nG(\chi_1)}{|\ell_1|}  \sum_{g \in \mathcal{M}_{\leqs [d/2]}}  \frac{\chi_2(g)}{|g|}  \sum_{\substack{g_1|g}} \frac{\chi_1(g_1)}{|g_1|}\, \copsum_{b_1 \pmod{g_1}} e \Big( \frac{b_1h}{g_1} \Big) \nonumber\\
    & \times \sum_{\lambda \in \Fq^\times} \sum_{\eta_1,\eta_2 \in \{0,1\}} \sum_{0\leqs k \leqs \mu}b_{\mu-k}(\ov \chi_1) \sum_{f \in \mathcal{M}_k} r_{\ov \chi_1}(f) e \Big(  \frac{\lambda f \ov b_1}{g_1}\Big),
\end{align}
where $\mu= 2\deg(g_1) + \deg(\ell_1)-n-2$.
\begin{remark}
\label{rmk1}
The error term $E_{11}$ vanishes in the following cases:
\begin{itemize}
\item when $m \leqs n$, $n$ is even, and $\deg(\ell_1)=1$;
\item when $m \leqs n$, $n$ is odd, and $\deg(\ell_1)<3$;
\item when  $m=n+1$, $n$ is even, and $\deg(\ell_1)=1$;
\end{itemize}
where we recall that $\deg(\ell_1)> 0$ since $\ell_1\not = 1$. 
\end{remark}
Applying the Voronoi summation formula from Proposition \ref{prop:Rvoronoi_notcoprime}, we have 
\begin{equation*}
S_{12}=M_{12}+E_{12},
\end{equation*}
where 
\begin{align}
\label{m12}
    M_{12} &=  q^n G(\chi_1) L(1,\ov \chi_1)\sum_{g \in \mathcal{M}_{\leqs [d/2]}}  \frac{\chi_2(g)}{|g|}  \sum_{\substack{g_1|g\\ \ell_1|g_1}} \frac{1}{|g_1|} \, \copsum_{b_1 \pmod{g_1}}  \ov{\chi_1}(b_1) e \Big( \frac{b_1h}{g_1} \Big) ,
\end{align}
and
\begin{align}
    \label{e12}
    E_{12} &= q^{n+2} \sum_{g \in \mathcal{M}_{\leqs [d/2]}}  \frac{\chi_2(g)}{|g|}  \sum_{\substack{g_1|g \\ \ell_1|g_1}} \frac{1}{|g_1|} \copsum_{b_1 \pmod{g_1}} e \Big( \frac{b_1h}{g_1} \Big) \ov \chi_1(-b_1) \nonumber\\
    &\times \sum_{\lambda\in\Fq^\times}\sum_{\eta_1,\eta_2\in\{0,1\}}\sum_{0\leqs k \leqs \nu} b_{\nu-k}(\chi_1) \sum_{f \in \mathcal{M}_{k}} r_{\chi_1}(f) e \Big(  \frac{ \lambda \ov b_1 f}{g_1}\Big),
\end{align}
where $\nu=2\deg(g_1)-n-2$.

\begin{remark}
\label{rmk2} The error term $E_{12}$ vanishes when $m<n+2$ or when $m=n+2$ and $n$ is odd. 
    \end{remark}

\begin{remark} 
\label{remarkell} 
If $\ell_1=\ell_2=\ell$, then  $\ell\mid g_1$ implies that $\ell\mid g$ and we get $M_{12}=E_{12}=0$.
\end{remark}

Now we focus on $S_2$ given by equation \eqref{s2}. We will use the Voronoi summation formula for the sum over $f$. 
We first assume that $\ell_1 \neq \ell_2$. We write 
\[S_2 = S_{21}+S_{22},\] where $S_{21}$ corresponds to the case $\ell_1 \nmid g_1$ and $S_{22}$ corresponds to the case $\ell_1|g_1$. Using Proposition \ref{prop:Rvoronoi_coprime} and Lemma \ref{residue}, we write
\[S_{21}=M_{21}+E_{21},\] where 
\begin{multline}
    M_{21}= \frac{q^n \chi_1(\ell_2) G(\chi_2) L(1,\chi_1) }{|\ell_2|^2}  \sum_{g \in \mathcal{M}_{\leqs  [(d-1)/2]}}  \frac{1}{|g|} \sum_{g_1 \mid g} \frac{\ov{\chi_2}({g}/{g_1}) \chi_1(g_1)}{|g_1|}\\\times \copsum_{b_1 \pmod{g_1\ell_2}} \ov{\chi_2}(-b_1) e\Big(\frac{b_1h}{g_1\ell_2}\Big) ,\label{m21}
\end{multline}
and 
\begin{multline}
E_{21} = \frac{q^n \chi_1(\ell_2) G(\chi_1) G(\chi_2) }{|\ell_1 \ell_2^2|}  \sum_{g \in \mathcal{M}_{\leqs  [(d-1)/2]}}  \frac{1}{|g|} \sum_{g_1 \mid g} \frac{\ov{\chi_2}({g}/{g_1}) \chi_1(g_1)}{|g_1|}  \\
 \times \copsum_{b_1 \pmod{g_1\ell_2}} \ov{\chi_2}(-b_1) e\Big(\frac{b_1h}{g_1\ell_2}\Big) \sum_{\lambda \in \Fq^\times} \sum_{\eta_1,\eta_2 \in \{0,1\}} \sum_{0\leqs k \leqs \mu} b_{\mu-k}(\ov{\chi_1}) \sum_{f \in \mathcal{M}_k} r_{\ov \chi_1}(f) e \Big(  \frac{\lambda \ov b_1 f}{g_1 \ell_2}\Big), \label{e21}
    \end{multline} 
where $\mu= 2 \deg(g_1 \ell_2)+\deg(\ell_1)-n-2$.
\begin{remark}
\label{rmk3}
The error term $E_{21}$ vanishes when $m\leqs n$, $n$ is even, and $\deg(\ell_1)=\deg(\ell_2)=1$. We recall that $\deg(\ell_1), \deg(\ell_2)>0$ since $\ell_1, \ell_2\not = 1$. 
\end{remark}

Now using Proposition \ref{prop:Rvoronoi_notcoprime}, we further rewrite
\begin{multline}
\label{m22}
    M_{22} = \frac{q^n G(\chi_1) G(\chi_2)L(1,\ov \chi_1)}{|\ell_2|^2}   \sum_{g \in \mathcal{M}_{\leqs  [(d-1)/2]}}  \frac{1}{|g|} \sum_{\substack{g_1 \mid g  \\ \ell_1|g_1}} \frac{\ov{\chi_2}({g}/{g_1})}{|g_1|} \\\times \copsum_{b_1 \pmod{g_1\ell_2}} \ov{\chi_2}(-b_1)\ov{\chi_1}(b_1)  e\Big(\frac{b_1h}{g_1\ell_2}\Big) ,
\end{multline}
and 
\begin{multline}
    E_{22} = \frac{q^{n+1}G(\chi_2)}{|\ell_2|^2}  \sum_{g \in \mathcal{M}_{\leqs  [(d-1)/2]}}  \frac{1}{|g|} \sum_{\substack{g_1 \mid g \\ \ell_1 | g_1}} \ov{\chi_2}\Big(\frac{g}{g_1}\Big)\copsum_{b_1 \pmod{g_1\ell_2}} \ov{\chi_1 \chi_2}(-b_1) e\Big(\frac{b_1h}{g_1\ell_2}\Big) \\
 \times \sum_{\lambda\in\Fq^\times}\sum_{\eta_1,\eta_2\in\{0,1\}} \sum_{0 \leqs k \leqs \nu} b_{\nu-k}(\chi_1) \sum_{f \in \mathcal{M}_{k}} r_{\chi_1}(f) e \Big(  \frac{ \lambda \ov b_1 f}{g_1 \ell_2}\Big), \label{e22}\end{multline}
where $\nu = 2 \deg(g_1 \ell_2)-n-2$.
\begin{remark}
\label{rmk4} The error term $E_{22}$ vanishes when $m\leqs n$ and $ \deg(\ell_2)=1$, or when $m=n+1$, $n$ is odd, and $\deg(\ell_2)=1$. (Recall that here $\deg(\ell_2)>0.$)
\end{remark}

Now we assume that $\ell_1 = \ell_2:=\ell$. Since $\ell \mid (g_1\ell)$ trivially, we use Proposition \ref{prop:Rvoronoi_notcoprime}. In this case, we write
\[S_2 = M_2+E_2,\] and $M_2$ is now similar to $M_{22}$ and $E_2$ is similar to $E_{22}$. Namely, we have
\begin{multline}
    M_2 =  \frac{q^n G(\chi_1) G(\chi_2)L(1,\ov \chi_1)}{|\ell|^2}   \sum_{g \in \mathcal{M}_{\leqs  [(d-1)/2]}}  \frac{1}{|g|} \sum_{\substack{g_1 \mid g }} \frac{\ov{\chi_2}({g}/{g_1})}{|g_1|} \\\times \copsum_{b_1 \pmod{g_1\ell}} \ov{\chi_2}(-b_1)\ov{\chi_1}(b_1)  e\Big(\frac{b_1h}{g_1\ell}\Big),
    \label{m2}
\end{multline} 
and 
\begin{multline}
    E_{2} = \frac{q^{n+1}G(\chi_2)}{|\ell|^2}  \sum_{g \in \mathcal{M}_{\leqs  [(d-1)/2]}}  \frac{1}{|g|} \sum_{\substack{g_1 \mid g }} \frac{\ov{\chi_2}({g}/{g_1})}{|g_1|}\copsum_{b_1 \pmod{g_1\ell}} \ov{\chi_1 \chi_2}(-b_1) e\Big(\frac{b_1h}{g_1\ell}\Big)\\\times \sum_{\lambda\in\Fq^\times}\sum_{\eta_1,\eta_2\in\{0,1\}} \sum_{0 \leqs k \leqs \nu} b_{\nu-k}(\chi_1) \sum_{f \in \mathcal{M}_{k}} r_{\chi_1}(f) e \Big(  \frac{ \lambda \ov b_1 f}{g_1 \ell}\Big), \label{e2} \end{multline}
where $\nu = 2 \deg(g_1 \ell)-n-2$.
\begin{remark}
\label{rmk6}
As before, the error term $E_2$ vanishes when  $m \leqs n$  and $\deg(\ell)<2$ or when  $m=n+1$, $n$ s odd, and $\deg(\ell)<2$. (Recall that here $\deg(\ell)>0$.)

\end{remark}
\subsection{The case \texorpdfstring{$\ell_1 =1$}{ell1 is 1}}
If $\ell_1=1$, using equation \eqref{s1initial}, we have 
\begin{align*}S_1&=\sum_{g \in \mathcal{M}_{\leqs  [d/2]}}  \frac{\chi_2(g)}{|g|}  \sum_{g_1|g} \, \copsum_{b_1 \pmod{g_1}} e \Big( \frac{b_1h}{g_1} \Big) \sum_{f \in \mathcal{M}_n} d(f) e \Big(  \frac{b_1f}{g_1}\Big).
\end{align*}

Using Proposition \ref{prop:Rvoronoi_coprime} and Lemma \ref{residue_d}, we write $S_1= M_1+E_1$, where 
\begin{align}
\label{m1d}
    M_1 &= q^n \sum_{g \in \mathcal{M}_{\leqs  [d/2]}}  \frac{\chi_2(g)}{|g|}  \sum_{g_1|g}  \frac{n+1-2 \deg(g_1)}{|g_1|} \copsum_{b_1 \pmod{g_1}} e \Big( \frac{b_1h}{g_1} \Big),
\end{align}
and
\begin{multline}
\label{e1}
    E_1 = q^n \sum_{g \in \mathcal{M}_{\leqs  [d/2]}}  \frac{\chi_2(g)}{|g|}  \sum_{g_1|g}  \frac{1}{|g_1|} \copsum_{b_1 \pmod{g_1}} e \Big( \frac{b_1h}{g_1} \Big)\\\times \sum_{\lambda \in \Fq^\times} \sum_{\eta_1,\eta_2 \in \{0,1\}} \sum_{0\leqs k \leqs \mu} b_{\mu-k} \sum_{f \in \mathcal{M}_k} d(f) e \Big(  \frac{\lambda \ov{b_1} f}{g_1}\Big),
\end{multline}
with $\mu =2 \deg(g_1)-n-2$. 
\begin{remark}
\label{rmk4.5}
As before, the error term $E_1$ vanishes when  $m < n+2$ or when  $m=n+2$ and  $n$ is odd. 
\end{remark}
In the case of $S_2$, given in equation \eqref{s2}, we use Proposition \ref{prop:Rvoronoi_coprime} and Lemma \ref{residue_d} again, and we write $S_2=M_2+E_2,$ where 
      \begin{multline}
      \label{m2d}
          M_2 = \frac{q^n G(\chi_2)}{|\ell_2|^2}  \sum_{g \in \mathcal{M}_{\leqs  [(d-1)/2]}}  \frac{1}{|g|} \sum_{g_1 \mid g} \frac{\ov{\chi_2}({g}/{g_1})( n+1-2\deg(g_1 \ell_2))}{|g_1|}\\ \times \copsum_{b_1 \pmod{g_1\ell_2}} \ov{\chi_2}(-b_1) e\Big(\frac{b_1h}{g_1\ell_2}\Big),
      \end{multline}
and
      \begin{multline}
      E_2 = \frac{q^n G(\chi_2)}{|\ell_2|^2}  \sum_{g \in \mathcal{M}_{\leqs  [(d-1)/2]}}  \frac{1}{|g|} \sum_{g_1 \mid g} \frac{\ov{\chi_2}({g}/{g_1})}{|g_1|} \copsum_{b_1 \pmod{g_1\ell_2}} \ov{\chi_2}(-b_1) e\Big(\frac{b_1h}{g_1\ell_2}\Big) \\
       \times \sum_{\lambda \in \Fq^\times} \sum_{\eta_1,\eta_2 \in \{0,1\}} \sum_{0\leqs k \leqs \mu} b_{\mu-k}(1) \sum_{f \in \cM_k} d(f)e\Big(\frac{\lambda\ov{b_1}f}{g_1 \ell_2}\Big), \label{e2d} \end{multline}
with $\mu = 2\deg(g_1 \ell_2)  - n - 2$. 
\begin{remark}
\label{rmk5}
As before, the error term $E_2$ vanishes when  $m \leqs n$  and $\deg(\ell_2)<2$ or when  $m=n+1$, $n$ s odd, and $\deg(\ell_2)<2$. (Recall that here $\deg(\ell_2)>0$.)
\end{remark}
\section{Evaluating the main terms when \texorpdfstring{$\ell_2 \neq 1$}{ell2 is not 1}}
\label{section_mt1}
\subsection{The main terms when \texorpdfstring{$\ell_1 \neq 1$}{ell1 is not 1}}
Here, we will evaluate the main terms $M_{11}$, $M_{12}$, $M_{21}$, and $M_{22}$ in the case where  $\ell_1 \neq 1$.

We begin by evaluating $M_{11}$, given in \eqref{m11}. Recall that expression \eqref{m11} holds regardless whether $\ell_1 = \ell_2$ or not. 
Note that we have
\begin{align}
\label{rel1}
 \copsum_{b_1 \pmod{g_1}} e \Big( \frac{b_1h}{g_1} \Big) = \sum_{g_2|g_1} \mu\Big(\frac{g_1}{g_2}\Big) \sum_{b_2 \pmod{g_2}} e \Big(  \frac{b_2 h }{g_2}\Big)   =  \sum_{g_2|(g_1,h)} \mu\Big(\frac{g_1}{g_2}\Big) |g_2|,
\end{align}
and hence, writing $g_1=Bg_2$ and $g=Ag_1$,
\begin{align} 
    M_{11} &= q^n L(1,\chi_1)  \sum_{g \in \mathcal{M}_{\leqs  [d/2]}}   \frac{\chi_2(g)}{|g|}  \sum_{\substack{g_1|g\\\ell_1\nmid g_1}} \frac{\chi_1(g_1)}{|g_1|} \sum_{g_2|(g_1,h)} \mu\Big(\frac{g_1}{g_2}\Big) |g_2| \nonumber \\
&= q^n L(1,\chi_1) \sum_{g_2|h} \frac{(\chi_1 \chi_2)(g_2)}{|g_2|} \sum_{\deg(AB) \leqs [d/2]-\deg(g_2)} \frac{\mu(B)\chi_2(A) (\chi_1 \chi_2)(B) }{|AB^2|}. \label{m11_interm}
\end{align}
Using Perron's formula \eqref{perron} twice for the sums over $A$ and $B$, we get that
\begin{multline*}
    M_{11} = q^n L(1,\chi_1) \frac{1}{(2 \pi i)^2} \oint_{|y|=q^{-\epsilon}} \oint_{|x|=q^{-\epsilon}} \frac{ \mathcal{L} \left( {xy}/{q},\chi_2\right)}{\mathcal{L} \left ({x}/{q^2},\chi_1 \chi_2\right) (1-x)(1-y)(xy)^{[d/2]}} \\
    \times 
    \sum_{g_2|h} \frac{(\chi_1 \chi_2)(g_2) (xy)^{\deg(g_2)}}{|g_2|}
     \, \frac{dx}{x} \frac{dy}{y}.
\end{multline*}
In the integral over $x$, we shift the contour of integration to $|x|=q^{1/2}$ and encounter a pole at $x=1$. The integral over the new contour is bounded up to a constant by $q^{n-d/4+\epsilon d}$. We then get that
\begin{align*}
    M_{11} &= \frac{ q^n  L(1,\chi_1)}{L(2,\chi_1\chi_2)} \frac{1}{2 \pi i} \oint_{|y|=q^{-\epsilon}}  \frac{ \mathcal{L} \left({y}/{q}, \chi_2\right)}{ (1-y)y^{[d/2]}} 
     \sum_{g_2|h} \frac{(\chi_1 \chi_2)(g_2) y^{\deg(g_2)}}{|g_2|}
     \frac{dy}{y} + O\Big(q^{n-d/4+\epsilon d} \Big).
\end{align*}
Now in the integral over $y$ we shift the contour of integration to $|y|=q^{1/2}$ and encounter the pole at $y=1$. By the Lindel\"of hypothesis \eqref{lindelof}, the integrand on the circle integral over the new contour is bounded up to a constant by $q^{n-d/4+\epsilon d}$. It follows that
\begin{align}\label{m11_expression}
    M_{11} =\frac{ q^n  L(1,\chi_1)L(1,\chi_2)}{L(2,\chi_1\chi_2)}  \sum_{\substack{g_2|h \\ \deg(g_2) \leqs [d/2]}} \frac{ (\chi_1 \chi_2)(g_2)}{|g_2|}+O\Big(q^{n-d/4+\epsilon d} \Big).
\end{align}

We now focus on evaluating $M_{12}$, which is given by equation \eqref{m12}. As remarked before, if $\ell_1=\ell_2$, then $M_{12}=0$. We then assume that $\ell_1 \neq \ell_2$. We rewrite
\begin{equation*}
    M_{12} = \frac{q^nG(\chi_1) L(1,\ov{\chi_1})}{|\ell_1|} \sum_{\substack{g \in \mathcal{M}_{\leqs [d/2]} \\ \ell_1 | g}} \frac{\chi_2(g)}{|g|} \sum_{g_2| (g/\ell_1)} \frac{1}{|g_2|} \, \copsum_{b_1 \pmod{\ell_1 g_2}} \ov{\chi_1}(b_1) e\Big(  \frac{b_1h}{\ell_1 g_2}\Big).
\end{equation*}
We have
\begin{align}
\label{sumstar}
    \copsum_{b_1 \pmod{\ell_1 g_2}} \ov \chi_1(b_1) e\Big(  \frac{b_1h}{\ell_1 g_2}\Big) = \sum_{g_3 | (\ell_1 g_2)} \mu\Big( \frac{\ell_1 g_2}{g_3}\Big) \ov{\chi_1}\Big(\frac{\ell_1 g_2}{g_3}\Big) \sum_{b_1 \pmod{g_3}} \ov{\chi_1}(b_1) e \Big(  \frac{b_1h}{g_3}\Big).
\end{align}
In the sum above, note that we need $\ell_1 |g_3$, since otherwise $\ov{\chi_1}(\ell_1g_2/g_3)=0$; hence we write $g_3=\ell_1 g_4$. Writing $b_1=\alpha+b_2\ell_1$, we then get that
\begin{align*}
    \eqref{sumstar} &= \sum_{g_4|g_2} \mu\Big(\frac{g_2}{g_4}\Big) \ov{\chi_1}\Big(\frac{g_2}{g_4}\Big) \sum_{b_1 \pmod{\ell_1 g_4}} \ov{\chi_1}(b_1) e \Big(  \frac{b_1 h}{\ell_1 g_4}\Big) \\
    &= \sum_{g_4|g_2} \mu\Big(\frac{g_2}{g_4}\Big) \ov{\chi_1}\Big(\frac{g_2}{g_4}\Big) \sum_{\alpha \pmod{\ell_1}} \ov{\chi_1}(\alpha)  e \Big(  \frac{\alpha h}{\ell_1 g_4} \Big) \sum_{b_2 \pmod{g_4}} e \Big( \frac{b_2 h }{g_4} \Big) \\
    &= \sum_{g_4|(g_2,h)} |g_4|\mu\Big(\frac{g_2}{g_4}\Big) \ov{\chi_1}\Big(\frac{g_2}{g_4}\Big) \sum_{\alpha \pmod{\ell_1}} \ov{\chi_1}(\alpha)  e \Big(  \frac{\alpha h/g_4}{\ell_1 } \Big) \\
    &= G(\ov{\chi_1}) \sum_{g_4|(g_2,h)} |g_4|
    \mu\Big(\frac{g_2}{g_4}\Big) \ov{\chi_1}\Big(\frac{g_2}{g_4}\Big)  \chi_1\Big(\frac{h}{g_4}\Big).
\end{align*}
Now, using the above and the fact that $G(\chi_1) G(\ov{\chi_1})= |\ell_1|$, and writing $g_2=Bg_4$ and $\frac{g}{\ell_1}=Cg_4$ so that $B\mid C$, we get that 
\begin{align*}
  & M_{12} = q^n L(1,\ov{\chi_1}) \sum_{\substack{g \in \mathcal{M}_{\leqs [d/2]} \\ \ell_1 | g}} \frac{\chi_2(g)}{|g|} \sum_{g_2| (g/\ell_1)} \frac{1}{|g_2|}\sum_{g_4|(g_2,h)} |g_4|\mu\Big(\frac{g_2}{g_4}\Big) \ov{\chi_1}\Big(\frac{g_2}{g_4}\Big) \chi_1\Big(\frac{h}{g_4}\Big)\\
    &= \frac{q^n \chi_2(\ell_1)L(1,\ov{\chi_1}) }{|\ell_1|} \sum_{g_4|h} \frac{ \chi_2(g_4) \chi_1(h/g_4)}{|g_4|}  \sum_{\substack{C \in \mathcal{M}\\ \deg(C) \leqs [d/2]-\deg(\ell_1)-\deg(g_4)}} \frac{\chi_2(C)}{|C|} \sum_{B|C} \frac{\mu(B)\ov{\chi_1}(B) }{|B|} \\
    &= \frac{q^n\chi_2(\ell_1)L(1,\ov{\chi_1}) }{|\ell_1|} \sum_{g_4|h}  \frac{\chi_2(g_4) \chi_1(h/g_4)}{|g_4|}  \sum_{\substack{C \in \mathcal{M}\\ \deg(C) \leqs [d/2]-\deg(\ell_1)-\deg(g_4)}} \frac{\chi_2(C)}{|C|}  \prod_{P|C} \Big( 1 - \frac{\ov{\chi_1}(P) }{|P|}\Big).
\end{align*}


Looking at the generating series for the sum over $C$, we have
\[\sum_{C \in \mathcal{M}} \frac{\chi_2(C)}{|C|}  \prod_{P|C} \Big( 1 - \frac{\ov{\chi_1}(P) }{|P|}\Big) u^{\deg(C)} = \frac{\mathcal{L}(u/q,\chi_2)}{\mathcal{L}(u/q^2,\ov{\chi_1} \chi_2)}.\]
Using Perron's formula \eqref{perron}, it follows that
\begin{multline*}
 \sum_{C \in \mathcal{M}_{\leqs [d/2]-\deg(\ell_1)-\deg(g_4)}} \frac{\chi_2(C)}{|C|}  \prod_{P|C} \Big( 1 - \frac{\ov{\chi_1}(P) }{|P|}\Big)\\
 = \frac{1}{2 \pi i} \oint_{|u|=q^{-\epsilon}}  \frac{\mathcal{L}(u/q,\chi_2)}{\mathcal{L}(u/q^2,\ov{\chi_1} \chi_2)(1-u)u^{[d/2]-\deg(\ell_1) - \deg(g_4)}} \frac{du}{u} .
\end{multline*}
In the integral above, we shift the contour of integration to $|u|=q^{1/2}$ and encounter a pole at $u=1$. Using the Lindel\"of hypothesis \eqref{lindelof} to bound the integrand on the circle $|u|=q^{1/2}$ and evaluating the residue at $u=1$, it follows that
\begin{align*}
 \sum_{C \in \mathcal{M}_{\leqs [d/2]-\deg(\ell_1)-\deg(g_4)}} &\frac{\chi_2(C)}{|C|}  \prod_{P|C} \Big( 1 - \frac{\ov{\chi_1}(P) }{|P|}\Big) = \frac{\mathcal{L}(1/q,\chi_2)}{\mathcal{L}(1/q^2,\ov{\chi_1} \chi_2)} + O \Big( \frac{|\ell_2|^{\epsilon} \sqrt{|\ell_1 g_4|}}{q^{d/4}}\Big) .
\end{align*}
Using the above in the expression for $M_{12}$, we get that 
\begin{align}
\label{m12_expression}
   M_{12} = \frac{q^n \chi_2(\ell_1) L(1,\ov{\chi_1}) L(1,\chi_2)}{|\ell_1|L(2,\ov{\chi_1}  \chi_2)}  \sum_{\substack{g_4|h \\ \deg(g_4) \leqs [d/2]-\deg(\ell_1)}}  \frac{\chi_2(g_4) \chi_1(h/g_4)}{|g_4|} + O \Big( \frac{q^{n-d/4} |h \ell_2|^{\epsilon}}{|\ell_1|^{1/2}} \Big). 
\end{align}
\kommentar{Now for the sum over $g_4$, note that if $\ell_1^j || h$, then we must have $\ell_1^j || g_4$, otherwise $\chi_1(h/g_4)=0$. 
We write $g_4=\ell_1^j g_5$ and $(\ell_1,g_5)=1$. Then we have 
\begin{align*}
   \sum_{g_4|h}  \frac{\chi_2(g_4) \chi_1(h/g_4)}{|g_4|} &= \frac{\chi_2(\ell_1^j)}{|\ell_1^j|} \sum_{\substack{g_5| (h/\ell_1^j) \\ (g_5,\ell_1)=1}} \frac{\chi_2(g_5) \chi_1(h/(\ell_1^j g_5))}{|g_5|}  \\
   &= \frac{\chi_2(\ell_1^j) \chi_1(h/\ell_1^j)}{|\ell_1^j|} \prod_{P|(h/\ell_1^j)} \Big(  \sum_{i=0}^{v_P(h/\ell_1^j)} \frac{ \ov{\chi_1} \chi_2(P)^i}{|P|^i}\Big).
\end{align*}
Combining the two equations above, it follows that
\begin{align}
    M_{12} &= \frac{q^n L(1,\ov{\chi_1}) L(1,\chi_2)}{|\ell_1|^{v_{\ell_1}(h)+1}L(2,\ov{\chi}_1\chi_2)} \chi_2(\ell_1) \chi_2(\ell_1^{v_{\ell_1}(h)}) \chi_1(h/\ell_1^{v_{\ell_1}(h)})\prod_{P|(h/\ell_1^{v_{\ell_1}(h)})} \Big(  \sum_{i=0}^{v_P(h/\ell_1^{v_{\ell_1}(h)})} \frac{ \ov{\chi_1} \chi_2(P)^i}{|P|^i}\Big) \nonumber \\
    &+ O \Big( \frac{q^{n-d/4} |h \ell_2|^{\epsilon}}{|\ell_1|^{1/2}} \Big). \label{m12_expression}
\end{align}
}

Now we will evaluate the terms $M_{21}$ and $M_{22}$ in the case $\ell_1 \neq \ell_2$. Using the computation in \eqref{sumstar}, we have that 
\begin{align*}
    \sumstar_{b_1 \pmod{g_1 \ell_2}} \ov{\chi_2}(-b_1) e \Big(  \frac{b_1 h}{g_1 \ell_2}\Big) = \ov{\chi_2}(-1) G(\ov{\chi_2}) \sum_{g_3|(g_1,h)} |g_3| \mu\Big(\frac{g_1}{g_3}\Big) \ov{\chi_2}\Big(\frac{g_1}{g_3}\Big) \chi_2\Big(\frac{h}{g_3}\Big),
\end{align*}
so using equation \eqref{m21} and the fact that $G(\chi_2) G(\ov{\chi_2})=|\ell_2|$, we get that
\begin{multline*}
    M_{21} = \frac{q^n \chi_1(\ell_2) \ov{\chi_2}(-1) L(1,\chi_1) }{|\ell_2|}  \sum_{g \in \mathcal{M}_{\leqs  [(d-1)/2]}}  \frac{1}{|g|} \sum_{g_1 \mid g} \frac{\ov{\chi_2}({g}/{g_1}) \chi_1(g_1)}{|g_1|} \\\times \sum_{g_3|(g_1,h)} |g_3| \mu\Big(\frac{g_1}{g_3}\Big) \ov{\chi_2}\Big(\frac{g_1}{g_3}\Big) \chi_2\Big(\frac{h}{g_3}\Big).
\end{multline*}
Setting $g_1=g_3 A$ and $g=g_1 B$, we have
\begin{multline*}
    M_{21} = \frac{q^n \chi_1(\ell_2) \ov{\chi_2}(-1) L(1,\chi_1)}{|\ell_2|} \sum_{g_3|h} \frac{\chi_1(g_3) \chi_2(h/g_3)}{|g_3|}\\\times \sum_{\deg(AB) \leqs [(d-1)/2]-\deg(g_3)} \frac{\mu(A) \chi_1 \ov{\chi_2}(A) \ov{\chi_2}(B)}{|A^2B|}.
\end{multline*}
Applying Perron's formula twice for the sums over $A, B$, we get that
\begin{align}
 &\sum_{\deg(AB) \leqs [(d-1)/2]-\deg(g_3)}  \frac{\mu(A) \chi_1 \ov{\chi_2}(A) \ov{\chi_2}(B)}{|A^2B|} \nonumber \\
 & = \frac{1}{(2 \pi i)^2}\oint_{|y|=q^{-\epsilon}} \oint_{|x|=q^{-\epsilon}} \frac{ \mathcal{L}(x/q,\ov{\chi_2})}{\mathcal{L}(xy/q^2,\chi_1 \ov{\chi_2})(1-x)(1-y)(xy)^{[(d-1)/2]-\deg(g_3)}}   \frac{dx}{x} \frac{dy}{y}.\label{eq:superperron}
\end{align}
In the integral over $x$, we shift the contour of integration to $|x|=q^{1/2}$, and encounter the pole at $x=1$. By applying the  Lindel\"of hypothesis \eqref{lindelof}, the error term coming from the double integral over the new contour is bounded by $q^{n-d/4-\deg(\ell_2)+\epsilon d}$. 
We further shift the contour over $y$ to $|y|=q^{1/2}$, and the error term coming from the new single integral will be similarly bounded by $q^{n-d/4-\deg(\ell_2)+\epsilon d}$.
Overall, it follows that
\begin{multline}
\label{m21_expression}
   M_{21} = \frac{q^n \chi_1(\ell_2) \ov{\chi_2}(-1) L(1,\chi_1) L(1,\ov{\chi_2})}{|\ell_2|L(2,\chi_1 \ov{\chi_2})} \sum_{\substack{g_3|h \\ \deg(g_3) \leqs [(d-1)/2]}} \frac{\chi_1(g_3) \chi_2(h/g_3)}{|g_3|}\\ + O \Big(q^{n-d/4-\deg(\ell_2)+\epsilon d} \Big).
\end{multline}
\kommentar{Similarly as before, we have
\begin{align*}
   \sum_{g_3|h}  \frac{\chi_1(g_3) \chi_2(h/g_3)}{|g_3|} &= \frac{\chi_1(\ell_2^j) \chi_2(h/\ell_2^j)}{|\ell_2^j|} \prod_{P|(h/\ell_2^j)} \Big(  \sum_{i=0}^{v_P(h/\ell_2^j)} \frac{ \ov{\chi_2} \chi_1(P)^i}{|P|^i}\Big),
\end{align*}
where $j$ is such that $\ell_2^j||h$.
Hence 
\begin{align*}
    M_{21} &= \frac{q^n  \ov{\chi_2}(-1) L(1,\chi_1) L(1,\ov{\chi_2})}{|\ell_2|^{v_{\ell_2}(h)+1}L(2,\chi_1 \ov{\chi_2})} \chi_1(\ell_2)\chi_1(\ell_2^{v_{\ell_2}(h)}) \chi_2(h/\ell_2^{v_{\ell_2}(h)}) \prod_{P|(h/\ell_2^{v_{\ell_2}(h)})}  \Big(  \sum_{i=0}^{v_P(h/\ell_2^{v_{\ell_2}(h)})} \frac{ \ov{\chi_2} \chi_1(P)^i}{|P|^i}\Big) \\
    &+ O \Big(q^{n-d/4+\epsilon d-\deg(\ell_2)} |\ell_2 h|^{\epsilon} \Big). 
\end{align*}
}
Now we evaluate $M_{22}$ given in \eqref{m22}. \kommentar{Recall that we have
\begin{align*}
    M_{22} = \frac{q^n G(\chi_1) G(\chi_2)L(1,\ov \chi_1)}{|\ell_2|^2}   \sum_{g \in \mathcal{M}_{\leqs  [(d-1)/2]}}  \frac{1}{|g|} \sum_{\substack{g_1 \mid g  \\ \ell_1|g_1}} \frac{\ov{\chi_2}\Big(\frac{g}{g_1}\Big)}{|g_1|} \copsum_{b_1 \pmod{g_1\ell_2}} \ov{\chi_2}(-b_1)\ov{\chi_1}(b_1)  e\Big(\frac{b_1h}{g_1\ell_2}\Big) ,
\end{align*}}
Let $\chi= \ov{\chi_1 \chi_2}$. We proceed similarly as in equation \eqref{sumstar}, and we have 
\kommentar{\begin{align*}
 & \sumstar_{b_1 \pmod{g_1 \ell_2}} \chi(b_1) e \Big( \frac{b_1h}{g_1 \ell_2}\Big) = \sum_{g_4|g_1} \mu(g_1/g_4)\chi(g_1/g_4) \sum_{b_1 \pmod{\ell_2 g_4}} \chi(b_1) e \Big(  \frac{b_1 h}{\ell_2 g_4}\Big).
 \end{align*}
 Note that we must have $\ell_1|g_4$. We write $b_1 =\alpha+\ell_1 \ell_2 c$. Then we have}
 \begin{align*}
  \sumstar_{b_1 \pmod{g_1 \ell_2}} \chi(b_1) e \Big( \frac{b_1h}{g_1 \ell_2}\Big)  &=   \sum_{g_4|g_1} \mu\Big(\frac{g_1}{g_4}\Big)\chi\Big(\frac{g_1}{g_4}\Big)  \sum_{\alpha \pmod {\ell_1 \ell_2}} \chi(\alpha)e \Big( \frac{\alpha h}{\ell_2 g_4}\Big)\\ &\times  \sum_{c \pmod{g_4/\ell_1}}  e \Big(\frac{\ell_1 c h}{g_4} \Big) \\
  &=\frac{1}{|\ell_1|} \sum_{\substack{g_4|g_1 \\ (g_4/\ell_1)|h}} |g_4|\mu\Big(\frac{g_1}{g_4}\Big)\chi\Big(\frac{g_1}{g_4}\Big)  \sum_{\alpha \pmod {\ell_1 \ell_2}} \chi(\alpha)e \Big( \frac{\alpha h}{\ell_2 g_4}\Big) \\
 &=  \sum_{\substack{g_5| (g_1/\ell_1) \\ g_5|h}} |g_5| \mu\Big(\frac{g_1}{\ell_1 g_5}\Big) \chi\Big(\frac{g_1}{\ell_1 g_5}\Big)\sum_{\alpha \pmod{\ell_1 \ell_2}} \chi(\alpha) e \Big( \frac{\alpha (h/g_5)}{\ell_1 \ell_2}\Big) \\
 &=G(\chi) \sum_{\substack{g_5| (g_1/\ell_1) \\ g_5|h}} |g_5| \mu\Big(\frac{g_1/\ell_1}{g_5}\Big)\chi\Big(\frac{g_1/\ell_1}{g_5}\Big) \ov{\chi}\Big(\frac{h}{g_5}\Big).
 \end{align*}
Using the above in the expression \eqref{m22} for $M_{22}$ and setting $g_1=\ell_1 g_2$, we have 
\begin{align*}
    M_{22} &= \frac{q^n \ov{\chi_2}(-1)G(\chi_1) G(\chi_2) G(\ov{\chi_1 \chi_2})L(1,\ov \chi_1) }{|\ell_2|^2}   \sum_{g \in \mathcal{M}_{\leqs  [(d-1)/2]}}  \frac{1}{|g|} \sum_{\substack{g_1 \mid g  \\ \ell_1|g_1}} \frac{\ov{\chi_2}({g}/{g_1})}{|g_1|} \\
    & \times  \sum_{\substack{g_5| (g_1/\ell_1) \\ g_5|h}} |g_5| \mu\Big(\frac{g_1/\ell_1}{g_5}\Big)\ov{\chi_1 \chi_2}\Big(\frac{g_1/\ell_1}{g_5}\Big) \chi_1 \chi_2\Big(\frac{h}{g_5}\Big) \\&= \frac{q^n \ov{\chi_2}(-1) G(\chi_1) G(\chi_2) G(\ov{\chi_1 \chi_2})L(1,\ov \chi_1)}{|\ell_1 \ell_2^2|} \\
    & \times \sum_{\substack{g \in \mathcal{M}_{\leqs [(d-1)/2]}\\ \ell_1|g}} \frac{1}{|g|} \sum_{g_2|(g/\ell_1)} \frac{\ov{\chi_2}({g}/{ (\ell_1 g_2)})}{|g_2|}  \sum_{\substack{g_5| g_2 \\ g_5|h}} |g_5| \mu\Big(\frac{g_2}{g_5}\Big)\ov{\chi_1 \chi_2}\Big(\frac{g_2}{g_5}\Big) \chi_1 \chi_2\Big(\frac{h}{g_5}\Big) \\
    &= \frac{q^n \ov{\chi_2}(-1) G(\chi_1) G(\chi_2) G(\ov{\chi_1 \chi_2})L(1,\ov \chi_1)}{|\ell_1^2 \ell_2^2|}  \sum_{g_5|h} \frac{\chi_1 \chi_2(h/g_5)}{|g_5|} \\
    & \times \sum_{\deg(AB) \leqs  [(d-1)/2]-\deg(\ell_1 g_5)} \frac{\mu(A) \ov{\chi_1 \chi_2}(A) \ov{\chi_2}(B)}{|A^2B|},
\end{align*}
where in the last equality, we set $g_2=g_5A$ and $g=\ell_1g_2B$.

We proceed as before, applying Perron's formula as in \eqref{eq:superperron} and using the fact that
\[G(\ov{\chi_1 \chi_2}) = \ov{\chi_1}(\ell_2) \ov{\chi_2}(\ell_1) G(\ov{\chi_1}) G(\ov{\chi_2}),\]
which follows from the Chinese Remainder Theorem, to obtain that
\begin{multline}
\label{m22_expression}
  M_{22}= \frac{q^n  \ov{\chi_1}(\ell_2)  \ov{\chi_2}(-\ell_1) L(1,\ov \chi_1)L(1,\ov{\chi_2})}{|\ell_1 \ell_2 | L(2,\ov{\chi_1 \chi_2})}     \sum_{\substack{g_5|h \\ \deg(g_5) \leqs [(d-1)/2]-\deg(\ell_1)}} \frac{ \chi_1 \chi_2(h/g_5)}{|g_5|}\\
  +O \Big( q^{n-d/4-\deg(\ell_1)/2-\deg(\ell_2)+\epsilon d} \Big).
\end{multline}
Now we evaluate $M_2$, the main term coming from $S_2$ in the case that $\ell_1=\ell_2$. Proceeding with equation \eqref{m2} as in equation \eqref{sumstar} and applying Perron's formula as in \eqref{eq:superperron},
we have 
\begin{align}
    M_2 &=    \frac{q^n \ov{\chi_2}(-1) G(\chi_1) G(\chi_2)G(\ov{\chi_1 \chi_2})L(1,\ov \chi_1)}{|\ell|^2}   \sum_{g \in \mathcal{M}_{\leqs  [(d-1)/2]}}  \frac{1}{|g|} \sum_{\substack{g_1 \mid g }} \frac{\ov{\chi_2}({g}/{g_1})}{|g_1|} \nonumber\\ &\times \sum_{g_4|(g_1,h)} |g_4| \mu\Big(\frac{g_1}{g_4}\Big) \ov{\chi_1 \chi_2}\Big(\frac{g_1}{g_4}\Big )  \chi_1 \chi_2\Big(\frac{h}{g_4}\Big) \nonumber \\& =  \frac{q^n \ov{\chi_2}(-1) G(\chi_1) G(\chi_2)G(\ov{\chi_1 \chi_2})L(1,\ov \chi_1)}{|\ell|^2} \sum_{g_4|h} \frac{\chi_1 \chi_2(h/g_4)}{|g_4|} \nonumber \\
    & \times \sum_{\deg(AB) \leqs  [(d-1)/2]-\deg(g_4)} \frac{\mu(A) \ov{\chi_1 \chi_2}(A) \ov{\chi_2}(B)}{|A^2B|}  \nonumber \\
    &=  \frac{q^n \ov{\chi_2}(-1) G(\chi_1) G(\chi_2)G(\ov{\chi_1 \chi_2})L(1,\ov \chi_1)L(1,\ov{\chi_2})}{|\ell^2|L(2,\ov{\chi_1 \chi_2})} \sum_{\substack{g_4| h \\ \deg(g_4) \leqs [(d-1)/2]}} \frac{ \chi_1 \chi_2(h/g_4)}{|g_4|}\nonumber\\ &+ O \Big(q^{n-d/4-\deg(\ell)/2+\epsilon d} \Big),
    \label{m2l}
\end{align}
where we have set $g_1=g_2A$ and $g=g_1B$. 
\kommentar{Alternatively,
$$M_2 = \frac{q^n G(\chi_1) G(\chi_2)G(\ov{\chi_1 \chi_2})L(1,\ov \chi_1)L(1,\ov{\chi_2})}{|\ell^2|L(2,\ov{\chi_1 \chi_2})} \sum_{\substack{g_4| h \\ \deg(g_4) \leqs [(d-1)/2]}} \frac{ \chi_1 \chi_2(h/g_4)}{|g_4|}.$$
}

\subsection{The main terms when \texorpdfstring{$\ell_1=1$}{ell1 is 1}}

To evaluate the main term $M_1$ given by \eqref{m1d}, we proceed as before,  using \eqref{rel1}, and applying Perron's formula as in \eqref{eq:superperron}, we have
\begin{align}
     M_1 &=q^n \sum_{g \in \mathcal{M}_{\leqs  [d/2]}}  \frac{\chi_2(g)}{|g|}  \sum_{g_1|g}  \frac{n+1-2\deg(g_1)}{|g_1|} \sum_{g_2|(g_1,h)} \mu\Big(\frac{g_1}{g_2}\Big) |g_2| \nonumber \\      
     &= q^n \sum_{g \in \mathcal{M}_{\leqs  [d/2]}}  \frac{\chi_2(g)}{|g|}  \frac{1}{2 \pi i} \oint_{|u|=q^{-\epsilon}} \sum_{g_1|g}\frac{du}{|g_1|(1-u)^2 u^{n+1-2\deg(g_1)}}  \sum_{g_2|(g_1,h)} \mu\Big(\frac{g_1}{g_2}\Big) |g_2|  \nonumber \\
     &=  \frac{q^n}{(2 \pi i)^3} \oint_{|y|=q^{-\epsilon}} \oint_{|x|=q^{-\epsilon}} \oint_{|u|=q^{-\epsilon}} \frac{ \mathcal{L}(x/q,\chi_2)}{\mathcal{L}( {xyu^2}/{q^2}, \chi_2 ) (1-u)^2 (1-x)(1-y)(xy)^{[d/2]}u^{n}} \nonumber\\& \times \sum_{g_2|h} \frac{\chi_2(g_2)(xyu^2)^{\deg(g_2)}}{|g_2| } \frac{du}{u} \frac{dx}{x} \frac{dy}{y}.\label{m11again1}
     \end{align}
In the integral over $y$ we shift the contour of integration to $|y|=q$ and we encounter a pole at $y=1$. The integral over the new contour will be bounded by $q^{n-d/2+\epsilon n}$.
Then we have 
\begin{multline*}
    M_1 = \frac{q^n}{(2 \pi i)^2} \oint_{|x|=q^{-\epsilon}} \oint_{|u|=q^{-\epsilon}}  \frac{ \mathcal{L}(x/q,\chi_2)}{\mathcal{L}({xu^2}/{q^2}, \chi_2) (1-u)^2 (1-x)x^{[d/2]}u^{n}}\\ \times \sum_{g_2|h} \frac{\chi_2(g_2)(u^2 x)^{\deg(g_2)}}{|g_2| } \frac{du}{u} \frac{dx}{x} + O \Big(  q^{n-d/2+\epsilon n}\Big).
\end{multline*}
  In the integral over $x$ we shift the contour to $|x|=q$ and encounter the pole at $x=1$. The integral over the new contour will be bounded by $q^{n-d/2+\deg(\ell_2)/2+\epsilon n} $. Then we obtain
  \begin{align}
      M_1 &= q^n L(1,\chi_2) \frac{1}{2 \pi i} \oint_{|u|=q^{-\epsilon}} \frac{ 1}{\mathcal{L}( {u^2}/{q^2}, \chi_2 ) (1-u)^2 u^{n}} \sum_{g_2|h} \frac{\chi_2(g_2)u^{2\deg(g_2)}}{|g_2| } \frac{du}{u} \nonumber  \\
      &+O \Big(q^{n-d/2+\epsilon n} |\ell_2|^{1/2} \Big) \nonumber \\
      &=\frac{q^n L(1,\chi_2)}{L(2,\chi_2)} \sum_{\substack{g_2|h   \\ \deg(g_2) \leqs [n/2]}} \frac{\chi_2(g_2)}{|g_2|} \Big[ n+1-2\deg(g_2)- \frac{2L'(2,\chi_2)}{(\log q) L(2,\chi_2)}\Big]\nonumber\\&+ O \Big(q^{n-d/2+\deg(\ell_2)/2+\epsilon n}  +q^{n/2+\epsilon n} \Big), \label{m1_1chi}
      \end{align}
where in the final step we shifted the last integral to $|u|=q^{1/2}$ resulting in a contribution from the residue at $u=1$ and an error term bounded by $q^{n/2+\epsilon n}$.

      Now we will evaluate the term $M_2$ given by \eqref{m2d}. We proceed as before (see \eqref{sumstar}). We have
      \begin{align*} M_2  &=\frac{q^n\ov{\chi_2}(-1)}{|\ell_2|} \sum_{g \in \mathcal{M}_{\leqs [(d-1)/2]}} \frac{1}{|g|} \sum_{g_1 \mid g} \frac{\ov{\chi_2}({g}/{g_1})(n+1-2\deg(g_1 \ell_2))}{|g_1|} \\ & \quad \times \sum_{g_4|(g_1,h)} |g_4| \mu\Big(\frac{g_1}{g_4}\Big) \ov{\chi_2}\Big(\frac{g_1}{g_4}\Big) \chi_2\Big(\frac{h}{g_4}\Big) \\ &= \frac{q^n\ov{\chi_2}(-1)}{|\ell_2|} \sum_{g \in \mathcal{M}_{\leqs [(d-1)/2]}} \frac{1}{|g|}  \frac{1}{2 \pi i} \oint_{|u|=q^{-\epsilon}} \sum_{g_1 \mid g} \frac{\ov{\chi_2}({g}/{g_1}) du }{|g_1|(1-u)^2u^{n+1-2\deg(g_1 \ell_2)}} \\ & \quad \times \sum_{g_4|(g_1,h)} |g_4| \mu\Big(\frac{g_1}{g_4}\Big) \ov{\chi_2}\Big(\frac{g_1}{g_4}\Big) \chi_2\Big(\frac{h}{g_4}\Big) \end{align*}
Now, we write $g = g_1B$ and $g_1 = g_4 A$ to get
\begin{align*}
M_2          &= \frac{q^n\ov{\chi_2}(-1)}{|\ell_2|}  \sum_{g_4|h} \frac{\chi_2(h/g_4)}{|g_4|} \frac{1}{2 \pi i} \oint_{|u|=q^{-\epsilon}} \sum_{\deg(AB)<d/2-\deg(g_4)}  \frac{u^{2\deg(g_4 A)} \ov{\chi_2}(AB) \mu(A) ~du}{|A^2B| u^{n+1-2\deg(\ell_2)}(1-u)^2}  \\
          &= \frac{q^n\ov{\chi_2}(-1)}{|\ell_2|}  \frac{1}{(2 \pi i)^3} \oint_{|y|=q^{-\epsilon}} \oint_{|x|=q^{-\epsilon}} \oint_{|u|=q^{-\epsilon}} \sum_{g_4|h} \frac{\chi_2(h/g_4) u^{2\deg(g_4 \ell_2)}}{|g_4|} \\
          & \quad \times \frac{\mathcal{L} ( {x}/{q},\ov{\chi_2}) }  {\mathcal{L} ( {xyu^2}/{q^2},\ov{\chi_2}) (1-u)^2 (1-x)(1-y) (xy)^{[(d-1)/{2}]-\deg(g_4)}u^{n}}  \frac{du}{u} \frac{dx}{x} \frac{dy}{y},
      \end{align*}
      where we wrote $g=g_1B$ and $g_1=g_4B$.


      In the integral over $x$ we shift the contour to $|x|=q$ and encounter a pole at $x=1$. The integral over the new contour will be bounded by $q^{n-d/2-\deg(\ell_2)+\epsilon d}$. 
      
      It follows that
      \begin{align*}
          M_2 &=  \frac{q^n \ov{\chi_2}(-1) L(1,\ov{\chi_2})}{|\ell_2|} \sum_{g_4|h} \frac{\chi_2(h/g_4) }{|g_4|} \\ & \times \frac{1}{(2 \pi i)^2} 
          \oint_{|y|=q^{-\epsilon}} \oint_{|x|=q^{-\epsilon}} \frac{u^{2\deg(g_4 \ell_2)}}{\mathcal{L} ( {yu^2}/{q^2},\ov{\chi_2}) (1-u)^2 (1-y) y^{[(d-1)/2]-\deg(g_4)}u^{n}} \frac{du}{u}\frac{dy}{y} \\
          &+ O \Big( q^{n-d/2-\deg(\ell_2)+\epsilon d}\Big).
      \end{align*}
      In the integral over $y$ we shift the contour of integration to $|y|=q$, and we encounter the pole at $y=1$. The integral over the new contour will be bounded by $q^{n-d/2-\deg(\ell_2)+ \epsilon d} $,
       and we get that
       \begin{multline*}
           M_2 = \frac{q^n \ov{\chi_2}(-1) L(1,\ov{\chi_2})}{|\ell_2|} \sum_{g_4|h} \frac{\chi_2(h/g_4) }{|g_4|} \\\times  \frac{1}{2 \pi i} \oint  \frac{u^{2\deg(g_4 \ell_2)} }{\mathcal{L} ({u^2}/{q^2},\ov{\chi_2} ) (1-u)^2 u^{n}} \frac{du}{u} + O \Big( q^{n-d/2-\deg(\ell_2)+\epsilon d} \Big).
       \end{multline*}
       We further shift the contour to $|u|=q^{1/2}$ and encounter the double pole at $u=1$. The integral over the new contour will be bounded by $q^{n/2+\epsilon d}$.  Computing the residue at $u=1$ we get that 
       \begin{multline}
           M_2 = \frac{q^n \ov{\chi_2}(-1) L(1,\ov{\chi_2})}{|\ell_2|L(2,\ov{\chi_2})} \sum_{\substack{g_4|h \\ \deg(g_4) \leqs n/2-\deg(\ell_2)}} \frac{\chi_2(h/g_4) }{|g_4|} \\\times  \Big(n+1-2\deg(g_4 \ell_2) - \frac{2L'(2,\ov{\chi_2})}{(\log q) L(2,\ov{\chi_2})} \Big) +  O \Big(  q^{n-d/2-\deg(\ell_2)+\epsilon d} +q^{n/2+\epsilon d} \Big).\label{m2_1chi}
       \end{multline}
\section{Setting up the proof when \texorpdfstring{$\ell_2=1$}{ell2 is 1} and the main terms}
Here, we will consider the correlations
\begin{equation*} 
\sum_{f \in \mathcal{M}_n} r_{\chi_1}(f) d(f+h).
\end{equation*}
We write
\[d(f+h) = 2 \sum_{\substack{g|f+h \\ \deg(g) \leqs [d/2]}}1 - \sum_{\substack{g | f+h \\ \deg(g) =d/2}} 1,\]
so 
\begin{align*}
    \sum_{f \in \mathcal{M}_n}r_{\chi_1}(f) d(f+h)= 2 \sum_{g \in \mathcal{M}_{\leqs [d/2]}} \sum_{\substack{f \in \mathcal{M}_n \\ f \equiv -h \pmod g}} r_{\chi_1}(f) - \sum_{g \in \mathcal{M}_{d/2}} \sum_{\substack{f \in \mathcal{M}_n \\ f \equiv -h \pmod g}} r_{\chi_1}(f).
\end{align*}
We note that the second term above vanishes if $d$ is odd. 
We then write 
\[\sum_{f \in \mathcal{M}_n} r_{\chi_1}(f) d(f+h) = 2S_1 - S_2,\] and the term $S_1$ above is the same as that given in \eqref{s1initial} when $\chi_2=1$ (after detecting the congruence condition using additive characters). We will only present the details for the  evaluation of $S_1$, since $S_2$ is very similar. 
As before, we write $S_{11}= M_{11}+E_{12}$, and $S_{12}=M_{12}+E_{12}$, where $S_{11}$ and $S_{12}$ correspond to the sums with $\ell_1|g_1$ and $\ell_1 \nmid g_1$ (in equation \eqref{s1initial}). If $\ell_1 =1$, then we only have one term, and we write $S_1=M_1+E_1$. We rewrite the error terms here as (see equations \eqref{e11} and \eqref{e12}): 
\begin{align}
   E_{11} &=\frac{q^n G(\chi_1)}{|\ell_1|}  \sum_{g \in \mathcal{M}_{\leqs [d/2]}}  \frac{1}{|g|}  \sum_{\substack{g_1|g}} \frac{\chi_1(g_1)}{|g_1|}\, \copsum_{b_1 \pmod{g_1}} e \Big( \frac{b_1h}{g_1} \Big) \nonumber\\
    & \times \sum_{\lambda \in \Fq^\times} \sum_{\eta_1,\eta_2 \in \{0,1\}} \sum_{0\leqs \mu \leqs k} b_{\mu-k}(\ov \chi_1) \sum_{f \in \mathcal{M}_k} r_{\ov \chi_1}(f) e \Big(  \frac{\lambda f \ov b_1}{g_1}\Big),
    \label{e11again}
\end{align}
where $\mu=2\deg(g_1)+\deg(\ell_1)-n-2$,
and 
\begin{align}
     E_{12} &= q^{n+2} \sum_{g \in \mathcal{M}_{\leqs [d/2]}}  \frac{1}{|g|}  \sum_{\substack{g_1|g \\ \ell_1|g_1}} \frac{1}{|g_1|} \copsum_{b_1 \pmod{g_1}} e \Big( \frac{b_1h}{g_1} \Big) \ov \chi_1(-b_1) \nonumber\\
    &\times \sum_{\lambda\in\Fq^\times}\sum_{\eta_1,\eta_2\in\{0,1\}}\sum_{0\leqs k \leqs \nu } b_{\nu-k}(\chi_1) \sum_{f \in \mathcal{M}_{k}} r_{\chi_1}(f) e \Big(  \frac{ \lambda \ov b_1 f}{g_1}\Big),\label{e12again}
\end{align}
where $\nu=2\deg(g_1)-n-2$

If $\ell_1=1$, then the error term becomes (see equation \eqref{e1}):
\begin{align}
E_1 &= q^n \sum_{g \in \mathcal{M}_{\leqs  [d/2]}}  \frac{1}{|g|}  \sum_{g_1|g}  \frac{1}{|g_1|} \copsum_{b_1 \pmod{g_1}} e \Big( \frac{b_1h}{g_1} \Big)\nonumber\\
&\times \sum_{\lambda \in \Fq^\times} \sum_{\eta_1,\eta_2 \in \{0,1\}} \sum_{0\leqs k \leqs \mu } b_{\mu-k} \sum_{f \in \mathcal{M}_k} d(f) e \Big(  \frac{\lambda \ov{b_1} f}{g_1}\Big),\label{e1again}
\end{align}
where $\mu=2\deg(g_1)-n-2$.
\begin{remark}
\label{rmk7}
    As before, we remark that $E_{11}=0$ in the following cases: 
    \begin{itemize}
   \item when  $m \leqs n$, $n$ is even, and $\deg(\ell_1)=1$;
   \item when $m \leqs n$, $n$ is odd,  and $\deg(\ell_1) <3$;
   \item when  $m=n+1$, $n$ is even, and $\deg(\ell_1)=1$.
    \end{itemize}
    Note that $E_{12}=E_1=0$ when $m < n+2$ or when $m=n+2$ and $n$ is odd.
\end{remark}

\subsection{The case \texorpdfstring{$\ell_1 \neq 1$}{ell1 is not 1}}
Similarly as in equation \eqref{m11_interm}, we have
\begin{align*}
    M_{11} &= q^n L(1,\chi_1) \sum_{g_2|h} \frac{\chi_1 (g_2)}{|g_2|} \sum_{\deg(AB) \leqs [d/2]-\deg(g_2)} \frac{ \mu(B) \chi_1 (B) }{|AB^2|} \nonumber  \\
    &= q^n L(1,\chi_1) \frac{1}{(2 \pi i)^2} \oint_{|y|=q^{-\epsilon}} \oint_{|x|=q^{-\epsilon}}  \frac{ 1}{\mathcal{L}({x}/{q^2},\chi_1)(1-xy) (1-x)(1-y)(xy)^{[d/2]}} \nonumber  \\
    & \times 
    \sum_{g_2|h} \frac{ \chi_1(g_2) (xy)^{\deg(g_2)}}{|g_2|} \frac{dx}{x} \frac{dy}{y},
\end{align*} 
In the integral over $x$, we shift the contour of integration to $|x|=q$ and we encounter two poles, at $x=1$ and $x=1/y$. The integral over the new contour will be bounded by $q^{n-d/2+\epsilon d}$. We then get that
\begin{align*}
   M_{11} &= \frac{q^n L(1,\chi_1)}{L(2,\chi_1)} \frac{1}{2 \pi i} \oint_{|y|=q^{-\epsilon}} \frac{1}{(1-y)^2 y^{[d/2]}} \sum_{g_2|h} \frac{ \chi_1(g_2) y^{\deg(g_2)}}{|g_2|} 
     \,  \frac{dy}{y}\\
     &- q^n L(1,\chi_1)\sum_{g_2|h} \frac{ \chi_1(g_2) }{|g_2|} \frac{1}{2 \pi i} \oint \frac{1}{ \mathcal{L} ( 1/{(yq^2)},\chi_1)(1-y)^2} dy + O \Big( q^{n-d/2+\epsilon d}\Big).
\end{align*}
Note that the second integral above vanishes, because the integrand is analytic inside the circle of integration $|y|=q^{-\epsilon}$. In the first integral, we shift the contour of integration to $|y|=q$ and obtain an error term of size $q^{n-d/2+\epsilon d}$. The main term will be given by the residue at $y=1$, so 
\begin{align}
\label{m11_chi1}
  M_{11} &= \frac{q^n L(1,\chi_1)}{L(2,\chi_1)}  \sum_{\substack{g_2|h \\ \deg(g_2) \leqs [d/2]}} \frac{\chi_1(g_2)}{|g_2|} \Big(  [d/2]+1 - \deg(g_2)\Big)+ O \Big( q^{n-d/2+\epsilon d}\Big).
\end{align}

The evaluation of $M_{12}(\chi_1,1)$ is similar to that of $M_{12}(\chi_1,\chi_2)$ in the previous sections. We skip some of the details, and we get that 
\begin{align*}
    M_{12} &= \frac{q^n  L(1,\ov{\chi_1})}{|\ell_1|} \sum_{g_4|h}  \frac{\chi_1(h/g_4)}{|g_4|}\frac{1}{2 \pi i} \oint_{|u|=q^{-\epsilon}}  \frac{1}{\mathcal{L}(u/q^2,\ov{\chi_1} )(1-u)^2u^{[d/2]-\deg(\ell_1) - \deg(g_4)}} \frac{du}{u}.
\end{align*}
We shift the integral to $|u|=q$  and encounter a pole at $u=1$. The integral over the new contour will be bounded by $q^{n-d/2+\epsilon d}$. 
\kommentar{We have 
\begin{align*}
   \sum_{g_4|h} & \frac{ \chi_1(h/g_4)}{|g_4|} u^{\deg(g_4)} = \frac{\chi_1(h/\ell_1^j)u^{\deg(\ell_1^j)}}{|\ell_1^j|} \prod_{P|(h/\ell_1^j)} \Big(  \sum_{i=0}^{v_P(h/\ell_1^j)} \frac{ \ov{\chi_1} (P)^i u^{i \deg(P)}}{|P|^i}\Big) \\
   &:= \frac{ \chi_1(h/\ell_1^j)}{|\ell_1^j|} u^{\deg(\ell_1^j)} \mathcal{A}(u).
\end{align*}
Then we have
\begin{align}
    M_{12} &= \frac{q^n L(1,\ov{\chi_1}) \chi_1(h/\ell_1^j)}{|\ell_1|^{1+j}L(2,\ov{\chi_1})} \Big[  ([d/2]-(j+1) \deg(\ell_1)+1) \mathcal{A}(1) + \mathcal{A}'(1) - \frac{\mathcal{A}(1) L'(2,\ov{\chi_1})}{\log q L(2,\ov{\chi_1})} \Big] \nonumber \\
    &+ O \Big(  q^{n-d/2+\epsilon d} |\ell_1 h|^{\epsilon}\Big). \label{m12_chi1}
\end{align}
In the above, $j= v_{\ell_1}(h).$ We have
\[\mathcal{A}(1) =\prod_{P|(h/\ell_1^j)} \Big(  \sum_{i=0}^{v_P(h/\ell_1^j)} \frac{ \ov{\chi_1} (P)^i} {|P|^i}\Big), \]
and
\[\mathcal{A}'(1) =\mathcal{A}(1) \sum_{P|(h/\ell_1^j)} \frac{\Big(  \sum_{i=0}^{v_P(h/\ell_1^j)} \frac{ i \deg(P) \ov{\chi_1} (P)^i }{|P|^i}\Big) }{ \Big(  \sum_{i=0}^{v_P(h/\ell_1^j)} \frac{ \ov{\chi_1} (P)^i u^{i \deg(P)}}{|P|^i}\Big)}.\]}
We have
\begin{multline}
\label{m12_chi1}
    M_{12} = \frac{q^n L(1,\ov \chi_1)}{|\ell_1|L(2,\ov \chi_1)} \sum_{\substack{g_4 | h \\ \deg(g_4) \leqs [d/2]-\deg(\ell_1)}} \frac{ \chi_1(h/g_4)}{|g_4|} \Big[[d/2]+1-\deg(g_4\ell_1) - \frac{L'(2,\ov \chi_1)}{\log q L(2,\ov \chi_1)} \Big] \\
    +  O \Big(  q^{n-d/2+\epsilon d}\Big).
\end{multline}
For  the other term \[\sum_{g \in \mathcal{M}_{d/2}} \sum_{\substack{f \in \mathcal{M}_n \\ f \equiv -h \pmod g}} r_{\chi_1}(f),\] which exists when $2\mid d$, we have  
\begin{align*}
    M_{21} & =q^n L(1,\chi_1) \sum_{g_2|h} \frac{\chi_1 (g_2)}{|g_2|} \sum_{\deg(B) \leqs  d/2-\deg(g_2)} \frac{\mu(B) \chi_1 (B) }{|B^2|} \sum_{\deg(A)=d/2-\deg(g_2)-\deg(B)}\frac{1}{|A|}.
\end{align*}    
  \kommentar{\begin{align*} 
    &= q^n L(1,\chi_1) \frac{1}{(2 \pi i)^2} \oint \oint \frac{ 1}{\mathcal{L} \Big( \frac{x}{q^2},\chi_1 \Big)(1-x)(1-y)(xy)^{d/2}} \\
    & \times 
     \prod_{P|h} \Bigg(\frac{ 1 - \Big(\frac{\chi_1 (P)}{|P|} (xy)^{\deg(P)}\Big)^{v_p(h)+1}}{ 1 - \frac{\chi_1 (P)}{|P|} (xy)^{\deg(P)}}\Bigg)
     \, \frac{dx}{x} \frac{dy}{y}\\
     &= \frac{q^n L(1,\chi_1)}{L(2,\chi_1)} \frac{1}{2 \pi i} \oint \frac{1}{(1-y) y^{d/2}} \prod_{P|h} \Bigg(\frac{ 1 - \Big(\frac{\chi_1 (P)}{|P|} y^{\deg(P)}\Big)^{v_p(h)+1}}{ 1 - \frac{\chi_1 (P)}{|P|} y^{\deg(P)}}\Bigg)
     \,  \frac{dy}{y}\\
     &+ O \Big( q^{n-d/2+\epsilon d} |h\ell_1|^{\epsilon}\Big)\\
      &= \frac{q^n L(1,\chi_1)}{L(2,\chi_1)} 
      \prod_{P|h} \sum_{i=0}^{v_P(h)} \Big( \frac{\chi_1(P)}{|P|}\Big)^i+ O \Big( q^{n-d/2+\epsilon d} |h\ell_1|^{\epsilon}\Big)
\end{align*}
and }
\kommentar{Since the inner sum over $A$ equals 1, we can write 
\begin{align}
    M_{21}   
     &= q^n L(1,\chi_1) \frac{1}{2 \pi i} \oint \frac{ 1}{\mathcal{L} \Big( \frac{x}{q^2},\chi_1 \Big)(1-x)x^{d/2}} 
     \prod_{P|h} \Bigg(\frac{ 1 - \Big(\frac{\chi_1 (P)}{|P|} x^{\deg(P)}\Big)^{v_p(h)+1}}{ 1 - \frac{\chi_1 (P)}{|P|} x^{\deg(P)}}\Bigg)
     \, \frac{dx}{x} \nonumber \\
      &= \frac{q^n L(1,\chi_1)}{L(2,\chi_1)} 
      \prod_{P|h} \sum_{i=0}^{v_P(h)} \Big( \frac{\chi_1(P)}{|P|}\Big)^i+ O \Big( q^{n-d/2+\epsilon d} |h\ell_1|^{\epsilon}\Big).
      \label{m21_chi1}
\end{align}   }
 We get that
\begin{align}
 M_{21} = \frac{q^n L(1,\chi_1)}{L(2,\chi_1)} \sum_{\substack{g_2|h \\ \deg(g_2) \leqs d/2}} \frac{\chi_1(g_2)}{|g_2|} + O \Big( q^{n-d/2+\epsilon d} \Big).
      \label{m21_chi1}
\end{align}
Finally, we have
\begin{align}
    M_{22} &= \frac{q^n L(1,\ov{\chi_1})}{|\ell_1|} \sum_{g_4|h}  \frac{ \chi_1(h/g_4)}{|g_4|}\frac{1}{2 \pi i} \oint_{|u|=q^{-\epsilon}} \frac{du}{\mathcal{L}(u/q^2,\ov{\chi_1} )(1-u)u^{1+d/2-\deg(\ell_1) - \deg(g_4)}} \nonumber \\
    &=  \frac{q^n L(1,\ov{\chi_1})}{|\ell_1|L(2,\ov{\chi_1})} \sum_{\substack{g_4|h \\ \deg(g_4) \leqs d/2-\deg(\ell_1)}} \frac{\chi_1(h/g_4)}{|g_4|} + O \Big(q^{n-d/2+\epsilon d}\Big).
    \label{m22_chi1}
\end{align}
\subsection{The case \texorpdfstring{$\ell_1=1$}{ell1 is 1}}
Finally, we consider the correlations of $d(f)$ and $d(f+h)$.
Similarly as before (see equation \eqref{m11again1}), we have 
     \begin{multline*}
         M_{1} = \frac{q^n}{(2 \pi i)^3} \oint_{|y|=q^{-\epsilon}} \oint_{|x|=q^{-\epsilon}} \oint_{|u|=q^{-\epsilon}} \frac{1-xyu^2/{q}}{ (1-u)^2 (1-x)^2(1-y)(xy)^{[d/2]}u^{n}}\\\times  \sum_{g_2|h} \frac{(xyu^2)^{\deg(g_2)}}{|g_2| } \frac{du}{u} \frac{dx}{x} \frac{dy}{y}.
     \end{multline*}
   We note that the integral over $y$ above is precisely given by the residue of the pole at $y=1$. We then get that
   \begin{align*}
       M_{1} &= \frac{q^n}{(2 \pi i)^2} \oint_{|x|=q^{-\epsilon}} \oint_{|u|=q^{-\epsilon}} \frac{1-xu^2/{q}}{ (1-u)^2 (1-x)^2x^{[d/2]}u^{n}} \sum_{g_2|h} \frac{(xu^2)^{\deg(g_2)}}{|g_2| } \frac{du}{u} \frac{dx}{x}.
   \end{align*}
    Note that in order for the integral over $u$ not to vanish we need $\deg(g_2) \leqs [n/2]$. We similarly have that
   \begin{align*}
       M_2 &=  \1_{2|d} \frac{q^n}{(2 \pi i)^2}\oint_{|x|=q^{-\epsilon}} \oint_{|u|=q^{-\epsilon}} \frac{ 1-{xu^2}/{q}}{(1-u)^2 (1-x)x^{[d/2]}u^{n}} \sum_{g_2|h} \frac{(xu^2)^{\deg(g_2)}}{|g_2| } \frac{du}{u} \frac{dx}{x}.\nonumber 
   \end{align*}
   The integrals above are given by the residue of the poles at $u=1$ and $x=1$ (with no error terms), and computing both of the terms and putting them together, we get that the final main term $M$ is given by
   \begin{multline}
M=2q^n\sum_{\substack{g_2|h \\ \deg(g_2) \leqs [n/2]}} \frac{1}{|g_2|} \Bigg[2\deg(g_2)^2\Big(1-\frac{1}{q}\Big)\\- \deg(g_2)
\bigg(\Big(3+\frac{1}{q}\Big)+\Big(1-\frac{1}{q}\Big)(n+2[d/2]) 
\bigg)\\+ \bigg( n+1+[d/2] \Big( n\Big(1-\frac{1}{q}\Big) +
\Big(1+\frac{1}{q}\Big) \Big) \bigg) \Bigg]   \\
 - \1_{2|d}q^n \sum_{\substack{g_2|h \\ \deg(g_2) \leqs [n/2]}} \frac{1}{|g_2|} \Bigg[ (n-2\deg(g_2)) \Big( 1-\frac{1}{q}\Big)+1+\frac{1}{q}\Bigg]. \label{maindd}
\end{multline}

\kommentar{
\section{Setting up the proof when $\ell_1=1$ and the main terms}
Here, we will consider the correlations
\begin{equation} 
\label{corr_d} \sum_{f \in \mathcal{M}_n} d(f) r_{\chi_2}(f+h).
\end{equation}
We set 
\[S_1(1,\chi_2) = \sum_{f \in \mathcal{M}_n}d(f)\sum_{\substack{g \in \mathcal{M}_{\leqs [d/2]}\\g \mid (f+h)}} \chi_2(g), \] and
\[S_2(1,\chi_2) =\sum_{f \in \mathcal{M}_n} d(f) \sum_{\substack{g \in \mathcal{M}_{\leqs [(d-1)/2]}\\ g \mid (f+h)}} \chi_2\Big(\frac{f+h}{g}\Big). \]

We have 
\[ \sum_{f \in \mathcal{M}_n} d(f) r_{\chi_2}(f+h)=S_1+S_2.\]

We have 
\begin{align*}S_1&=\sum_{g \in \mathcal{M}_{\leqs [d/2]}} \chi_2(g) \sum_{\substack{f \in \mathcal{M}_n\\f\equiv -h \pmod{g}}}d(f)\\
&=\sum_{g \in \mathcal{M}_{\leqs  [d/2]}}  \frac{\chi_2(g)}{|g|}  \sum_{g_1|g} \, \copsum_{b_1 \pmod{g_1}} e \Big( \frac{b_1h}{g_1} \Big) \sum_{f \in \mathcal{M}_n} d(f) e \Big(  \frac{b_1f}{g_1}\Big).
\end{align*}

Using Proposition \ref{prop:Rvoronoi_coprime} and Lemma \ref{residue_d}, we write $S_1= M_1+E_1$, where 
\begin{align*}
    M_1 &= q^n \sum_{g \in \mathcal{M}_{\leqs  [d/2]}}  \frac{\chi_2(g)}{|g|}  \sum_{g_1|g}  \frac{1}{|g_1|} \copsum_{b_1 \pmod{g_1}} e \Big( \frac{b_1h}{g_1} \Big) (n+1-2 \deg(g_1)),
\end{align*}
and
\begin{align}
\label{e1}
    E_1 &= q^n \sum_{g \in \mathcal{M}_{\leqs  [d/2]}}  \frac{\chi_2(g)}{|g|}  \sum_{g_1|g}  \frac{1}{|g_1|} \copsum_{b_1 \pmod{g_1}} e \Big( \frac{b_1h}{g_1} \Big) \sum_{\lambda \in \Fq^\times} \sum_{\eta_1,\eta_2 \in \{0,1\}} \sum_{0\leqs k \leqs \mu} b_{\mu-k} \sum_{f \in \mathcal{M}_k} d(f) e \Big(  \frac{\lambda \ov{b_1} f}{g_1}\Big),
\end{align}
where $\mu =2 \deg(g_1)-n-2$. 

To evaluate the main term $M_1$ above, we proceed as in \S \ref{section_mt1}, and we have 
 \begin{align*}
 \copsum_{b_1 \pmod{g_1}} e \Big( \frac{b_1h}{g_1} \Big) = \sum_{g_2|(g_1,h)} \mu(g_1/g_2) |g_2|,
\end{align*}
so
\begin{align}
     M_1 &=q^n \sum_{g \in \mathcal{M}_{\leqs  [d/2]}}  \frac{\chi_2(g)}{|g|}  \sum_{g_1|g}  \frac{n+1-2\deg(g_1)}{|g_1|} \sum_{g_2|(g_1,h)} \mu(g_1/g_2) |g_2|  \label{m1} \\ 
     &= q^n \sum_{g \in \mathcal{M}_{\leqs  [d/2]}}  \frac{\chi_2(g)}{|g|}  \frac{1}{2 \pi i} \oint \sum_{g_1|g}\frac{du}{|g_1|(1-u)^2 u^{n+1-2\deg(g_1)}}  \sum_{g_2|(g_1,h)} \mu(g_1/g_2) |g_2|  \nonumber \\
     &=  \frac{q^n}{(2 \pi i)^3} \oint \oint \oint \frac{ \mathcal{L}(x/q,\chi_2)}{\mathcal{L}\Big( \frac{xyu^2}{q^2}, \chi_2 \Big) (1-u)^2 (1-x)(1-y)(xy)^{[d/2]+1}u^{n+1}} \sum_{g_2|h} \frac{\chi_2(g_2)(u^2 xy)^{\deg(g_2)}}{|g_2| } du dx dy,\nonumber 
     \end{align}
     where we are integrating along small circles around the origin, say $|u|=|x|=|y|=q^{-\epsilon}$. 

     We consider the cases $\ell_2=1$ and $\ell_2 \neq 1$ separately.

     We first take $\ell_2 =1$. In this case,
     \begin{align*}
         M_{1}(1,1) = \frac{q^n}{(2 \pi i)^3} \oint \oint \oint \frac{1-\frac{xyu^2}{q}}{ (1-u)^2 (1-x)^2(1-y)(xy)^{[d/2]+1}u^{n+1}} \sum_{g_2|h} \frac{(u^2 xy)^{\deg(g_2)}}{|g_2| } du dx dy,
     \end{align*}
   We note that the integral over $y$ above is precisely given by the residue of the pole at $y=1$. We then get that
   \begin{align*}
       M_{1}(1,1) &= \frac{q^n}{(2 \pi i)^2} \oint \oint \frac{1-\frac{xu^2}{q}}{ (1-u)^2 (1-x)^2x^{[d/2]+1}u^{n+1}} \sum_{g_2|h} \frac{(u^2 x)^{\deg(g_2)}}{|g_2| } du dx.
   \end{align*}
   \kommentar{
   \acom{Now this is exactly the integral from the file shiftn with Brian, and we explicitly compute it there. There's an exact formula, with no error term. \mcom{Notice that the condition $\deg(g_2)\leqs n/2$ under the sum in shiftn is implicit here since otherwise the integral over $u$ is zero as there is no pole in $u=0$ and we are integrating near 0. Notice also that there is a change of variables between both formulas $u \rightarrow u/q$ from shiftn to this file. I've checked this and I agree. The powers of $q$ are well-accounted for.}
   We similarly compute $M_2(1,1)$ and we get that the main term in this case, which we denote by $M$, is equal to
   \begin{align*}
       M &= q^n \sum_{\substack{g_2|h \\ \deg(g_2) \leqs [n/2]}} \frac{1}{|g_2|} \Big[ 4 \deg(g_2)^2 \Big(1-\frac{1}{q} \Big)-2\deg(g_2) \Big(\frac{1}{q}+ \Big( 1-\frac{1}{q}\Big)(n+2[d/2])\Big)\\
       &+2 \Big[  n+1 +[d/2] \Big( n \Big( 1-\frac{1}{q}\Big)+1+\frac{1}{q}\Big) \Big]\Big] \\
       &- \1_{2|d}q^n \sum_{\substack{g_2|h \\ \deg(g_2) \leqs [n/2]}} \frac{1}{|g_2|} \Big[ (n-2\deg(g_2)) \Big( 1-\frac{1}{q}\Big)+1+\frac{1}{q}\Big].
   \end{align*}}}
   Note that in order for the integral over $u$ not to vanish we need $\deg(g_2) \leqs [n/2]$. We similarly have that
   \begin{align*}
       M_2(1,1) &=  \1_{2|d} \frac{q^n}{(2 \pi i)^2}\oint \oint \frac{ 1-\frac{xu^2}{q}}{(1-u)^2 (1-x)x^{d/2+1}u^{n+1}} \sum_{g_2|h} \frac{(xu^2)^{\deg(g_2)}}{|g_2| } du dx.\nonumber 
   \end{align*}
   The integrals above are given by the residue of the poles at $u=1$ and $x=1$ (with no error terms), and computing both of the terms and putting them together, we get that 
   \begin{align*}
M(1,1)&=2q^n\sum_{\substack{g_2|h \\ \deg(g_2) \leqs [n/2]}} \frac{1}{|g_2|} \bigg[2\deg(g_2)^2\Big(1-\frac{1}{q}\Big)- \deg(g_2)
\Big(\Big(3+\frac{1}{q}\Big)+\Big(1-\frac{1}{q}\Big)(n+2[d/2]) 
\Big)\\
&+ \bigg( n+1+[d/2] \Big( n\Big(1-\frac{1}{q}\Big) +
\Big(1+\frac{1}{q}\Big) \Big) \Big) \Big] \\
 &- \1_{2|d}q^n \sum_{\substack{g_2|h \\ \deg(g_2) \leqs [n/2]}} \frac{1}{|g_2|} \Big[ (n-2\deg(g_2)) \Big( 1-\frac{1}{q}\Big)+1+\frac{1}{q}\Big].
\end{align*}
\kommentar{
\mcom{I get 
\begin{align*}
M&=2q^n\sum_{\substack{g_2|h \\ \deg(g_2) \leqs [n/2]}} \frac{1}{|g_2|} \Big[2\deg(g_2)^2\left(1-\frac{1}{q}\right)- \deg(g_2)
\Big(\left(3+\frac{1}{q}\right)+\left(1-\frac{1}{q}\right)(n+2[d/2]) 
\Big)\\
&+ \Big( n+1+[d/2] \left( n\left(1-\frac{1}{q}\right) +
\left(1+\frac{1}{q}\right) \right) \Big) \Big] \\
 &- \1_{2|d}q^n \sum_{\substack{g_2|h \\ \deg(g_2) \leqs [n/2]}} \frac{1}{|g_2|} \Big[ (n-2\deg(g_2)) \Big( 1-\frac{1}{q}\Big)+1+\frac{1}{q}\Big].
\end{align*}

See my comment after Alexandra's comment for details
}

\acom{I need to recheck this (I did the computation a long time ago with Brian and I used Mathematica back then; I think Brian checked numerically and it agreed). The main term I wrote down is $M$, and on the previous page we had $M_1$, so the term $M$ comes after putting together $M_1$ and $M_2$ (I think $M_2$ corresponds to $\deg(g)=d/2$.}

\mcom{I'm going to do this by hand: \begin{align*}
&\frac{q^n}{(2 \pi i)^2} \oint \oint \frac{\left(1-\frac{xu^2}{q}\right)(u^2 x)^{\deg(g)}}{ (1-u)^2 (1-x)^2x^{[d/2]+1}u^{n+1}} du dx\\
&=\frac{q^n}{(2 \pi i)^2} \oint \frac{u^{2\deg(g)-n-1}}{(1-u)^2} \oint \frac{\left(1-\frac{xu^2}{q}\right)x^{\deg(g)-[d/2]-1}}{  (1-x)^2} dx du\\
&=-\frac{q^n}{2 \pi i} \oint \frac{u^{2\deg(g)-n-1}}{(1-u)^2} \left[-\frac{u^2}{q}+\left(1-\frac{u^2}{q}\right) (\deg(g)-[d/2]-1) \right] du\\
&=-\frac{(\deg(g)-[d/2]-1)q^n}{2 \pi i} \oint \frac{u^{2\deg(g)-n-1}}{(1-u)^2}+\frac{(\deg(g)-[d/2])q^{n-1}}{2 \pi i} \oint \frac{u^{2\deg(g)-n+1}}{(1-u)^2}  du\\
&=q^n (\deg(g)-[d/2]-1)(2\deg(g)-n-1)-q^{n-1}(\deg(g)-[d/2])(2\deg(g)-n+1)\\
&=q^n\Big[2\left(1-\frac{1}{q}\right)\deg(g)^2+ 
\Big(-2\left(1-\frac{1}{q}\right)[d/2] -\left(1-\frac{1}{q}\right)n- \left(3+\frac{1}{q}\right)
\Big)\deg(g)\\
&+ \Big(\left(1-\frac{1}{q}\right)n[d/2]  +
\left(1+\frac{1}{q}\right) [d/2] + n+1\Big) \Big] 
\end{align*}
For $M_2$ we have 
\begin{align*}
M_2&=\sum_{f \in \mathcal{M}_n}d(f)\sum_{\substack{g \in \mathcal{M}_{d/2}\\g \mid (f+h)}} 1\\&= \sum_{\substack{g \in \mathcal{M}_{d/2}}} \sum_{\substack{f \in \mathcal{M}_n\\f\equiv -h \pmod{g}}}d(f)\\
&=\sum_{g \in \mathcal{M}_{d/2}}  \frac{1}{|g|}  \sum_{g_1|g} \, \copsum_{b_1 \pmod{g_1}} e \Big( \frac{b_1h}{g_1} \Big) \sum_{f \in \mathcal{M}_n} d(f) e \Big(  \frac{b_1f}{g_1}\Big).
\end{align*}
Using Proposition \ref{prop:Rvoronoi_coprime} and Lemma \ref{residue_d}, we write $M_2= M_{22}+E_{22}$, where 
\begin{align*}
    M_{22} &= q^n \sum_{g \in \mathcal{M}_{  d/2}}  \frac{1}{|g|}  \sum_{g_1|g}  \frac{1}{|g_1|} \copsum_{b_1 \pmod{g_1}} e \Big( \frac{b_1h}{g_1} \Big) (n+1-2 \deg(g_1)),
\end{align*}
and
\begin{align*}
    E_{22} &= q^n \sum_{g \in \mathcal{M}_{ d/2}}  \frac{1}{|g|}  \sum_{g_1|g}  \frac{1}{|g_1|} \copsum_{b_1 \pmod{g_1}} e \Big( \frac{b_1h}{g_1} \Big) \sum_{\lambda \in \Fq^\times} \sum_{\eta_1,\eta_2 \in \{0,1\}} \sum_{0\leqs k \leqs \mu} b_{\mu-k} \sum_{f \in \mathcal{M}_k} d(f) e \Big(  \frac{\lambda \ov{b_1} f}{g_1}\Big),
\end{align*}
where $\mu =2 \deg(g_1)-n-2$.

To evaluate the main term $M_{22}$ above, we proceed as in Section \ref{section_mt1}, and we have 
\begin{align*}
     M_{22} 
     &= q^n \sum_{g \in \mathcal{M}_{d/2}}  \frac{1}{|g|}  \frac{1}{2 \pi i} \oint \sum_{g_1|g}\frac{du}{|g_1|(1-u)^2 u^{n+1-2\deg(g_1)}}  \sum_{g_2|(g_1,h)} \mu(g_1/g_2) |g_2|  \nonumber \\
     &= q^n \sum_{g_2 \mid h}  \frac{1}{|g_2|}  \frac{1}{2 \pi i} \oint \sum_{g_1|g}\frac{du}{(1-u)^2 u^{n+1-2\deg(g_2)}}  \sum_{\deg(AB)=d/2-\deg(g_2)}\frac{ \mu(B)u^{2\deg(B)}}{|AB^2|} \nonumber \\
      &= q^n \sum_{g_2 \mid h}  \frac{1}{|g_2|}  \frac{1}{2 \pi i} \oint \sum_{g_1|g}\frac{du}{(1-u)^2 u^{n+1-2\deg(g_2)}}  \sum_{\deg(B)\leqs d/2-\deg(g_2)}\frac{ \mu(B)u^{2\deg(B)}}{|B^2|} \nonumber \\
     &=  \frac{q^n}{(2 \pi i)^2}\oint \oint \frac{ 1-\frac{xu^2}{q}}{(1-u)^2 (1-x)x^{d/2+1}u^{n+1}} \sum_{g_2|h} \frac{(xu^2)^{\deg(g_2)}}{|g_2| } du dx,\nonumber 
     \end{align*}
     where we are integrating along small circles around the origin, say $|u|=|x|=q^{-\epsilon}$. The integral over $x$ is given by the residue at $x=1$, thus
 \begin{align*}
     M_{22}  &=  \frac{q^n}{2 \pi i} \oint \frac{ 1-\frac{u^2}{q}}{(1-u)^2u^{n+1}} \sum_{g_2|h} \frac{u^{2\deg(g_2)}}{|g_2| } du
     \end{align*}   
Again, this is given by an exact formula extracting the residue at $u=1$. 
\begin{align*}
     M_{22}  &=  q^n \sum_{\substack{g_2|h\\\deg(g_2)\leqs n/2}} \frac{1}{|g_2| }\Big[(n+1-2\deg(g_2))-\frac{1}{q}(n-1-2\deg(g_2))\Big ]
     \end{align*}   
}

}
   
Now we assume that $\ell_2 \neq 1$. In the integral for $M_1$, we similarly compute the integral over $y$ and we have
\begin{align*}
    M_1 (1,\chi_2)&= \frac{q^n}{(2 \pi i)^2} \oint \oint  \frac{ \mathcal{L}(x/q,\chi_2)}{\mathcal{L}\Big( \frac{xu^2}{q^2}, \chi_2 \Big) (1-u)^2 (1-x)x^{[d/2]+1}u^{n+1}} \sum_{g_2|h} \frac{\chi_2(g_2)(u^2 x)^{\deg(g_2)}}{|g_2| } du dx dy.
\end{align*}
  The integral over $x$ is given exactly by the pole at $x=1$, so we have
  \begin{align*}
      M_1(1,\chi_2) &= q^n L(1,\chi_2) \frac{1}{2 \pi i} \oint \frac{ 1}{\mathcal{L}\Big( \frac{u^2}{q^2}, \chi_2 \Big) (1-u)^2 u^{n+1}} \sum_{g_2|h} \frac{\chi_2(g_2)u^{2\deg(g_2)}}{|g_2| } du dx dy \\
      &= \frac{q^n L(1,\chi_2)}{L(2,\chi_2)} \sum_{\substack{g_2|h \\ \deg(g_2) \leqs [n/2]}} \frac{\chi_2(g_2)}{|g_2|} \Big[ n+1-2\deg(g_2)- \frac{2L'(2,\chi_2)}{(\log q) L(2,\chi_2)}\Big].
      \end{align*}
      Now for the term $S_2(1,\chi_2)$ we proceed as before (see equation \eqref{s2}) and we have
      \begin{align*}
          S_2(1,\chi_2) =  \frac{G(\chi_2)}{|\ell_2|}  \sum_{g \in \mathcal{M}_{\leqs  [(d-1)/2]}}  \frac{1}{|g|} \sum_{g_1 \mid g} \ov{\chi_2}\Big(\frac{g}{g_1}\Big)\copsum_{b_1 \pmod{g_1\ell_2}} \ov{\chi_2}(-b_1) e\Big(\frac{b_1h}{g_1\ell_2}\Big) \sum_{f \in \cM_n} d(f) e\Big(\frac{b_1f}{g_1\ell_2}\Big).
      \end{align*}
   Using Proposition \ref{prop:Rvoronoi_coprime} and Lemma \ref{residue_d} again, we write $S_2=M_2+E_2,$ where 
      \begin{align*}
          M_2 &= \frac{q^n G(\chi_2)}{|\ell_2|^2}  \sum_{g \in \mathcal{M}_{\leqs  [(d-1)/2]}}  \frac{1}{|g|} \sum_{g_1 \mid g} \frac{\ov{\chi_2}\Big(\frac{g}{g_1}\Big)}{|g_1|}\copsum_{b_1 \pmod{g_1\ell_2}} \ov{\chi_2}(-b_1) e\Big(\frac{b_1h}{g_1\ell_2}\Big) \Big( n+1-2\deg(g_1 \ell_2)\Big),
      \end{align*}
and
      \begin{align}
      E_2 &= \frac{q^n G(\chi_2)}{|\ell_2|^2}  \sum_{g \in \mathcal{M}_{\leqs  [(d-1)/2]}}  \frac{1}{|g|} \sum_{g_1 \mid g} \frac{\ov{\chi_2}\Big(\frac{g}{g_1}\Big)}{|g_1|} \copsum_{b_1 \pmod{g_1\ell_2}} \ov{\chi_2}(-b_1) e\Big(\frac{b_1h}{g_1\ell_2}\Big)\sum_{\lambda \in \Fq^\times} \sum_{\eta_1,\eta_2 \in \{0,1\}} \sum_{0\leqs k \leqs \mu} b_{\mu-k}(1)\\
      & \times \sum_{f \in \cM_k} d(f)e\Big(\frac{\lambda\ov{b_1}f}{g_1 \ell_2}\Big) \end{align}
where $\mu = 2\deg(g_1 \ell_2)  - n - 2$. 
\begin{remark}
\label{rmk5}
    As before, we note that if $m\leqs n$ (so that $d=n$) and $\deg(\ell_2)<2$, the error term above vanishes. 
\end{remark}
      
      Now we will proceed as before (see \eqref{sumstar}). We have
      \begin{align*}
          \sum_{b_1 \pmod{g_1 \ell_2}} \ov{\chi_2}(b_1) e\Big(\frac{b_1h}{g_1\ell_2}\Big) = G(\ov{\chi_2}) \sum_{g_4|(g_1,h)} |g_4| \mu(g_1/g_4) \ov{\chi_2}(g_1/g_4) \chi_2(h/g_4),
      \end{align*}
      so
      \begin{align*}
          M_2&=\frac{q^n}{|\ell_2|} \sum_{g \in \mathcal{M}_{<d/2}} \frac{1}{|g|} \sum_{g_1 \mid g} \frac{\ov{\chi_2}\Big(\frac{g}{g_1}\Big)(n+1-2\deg(g_1 \ell_2))}{|g_1|}  \sum_{g_4|(g_1,h)} |g_4| \mu(g_1/g_4) \ov{\chi_2}(g_1/g_4) \chi_2(h/g_4) \\
          &= \frac{q^n}{|\ell_2|} \sum_{g \in \mathcal{M}_{<d/2}} \frac{1}{|g|}  \frac{1}{2 \pi i} \oint \sum_{g_1 \mid g} \frac{\ov{\chi_2}\Big(\frac{g}{g_1}\Big) du }{|g_1|u^{n+1-2\deg(g_1 \ell_2)}(1-u)^2}  \sum_{g_4|(g_1,h)} |g_4| \mu(g_1/g_4) \ov{\chi_2}(g_1/g_4) \chi_2(h/g_4) \\
          &= \frac{q^n}{|\ell_2|}  \sum_{g_4|h} \frac{\chi_2(h/g_4)}{|g_4|} \frac{1}{2 \pi i} \oint \sum_{\deg(AB)<d/2-\deg(g_4)}  \frac{u^{2\deg(g_4 A)} \ov{\chi_2}(AB) \mu(A)}{|A^2B| u^{n+1-2\deg(\ell_2)}(1-u)^2} du \\
          &= \frac{q^n}{|\ell_2|}  \frac{1}{(2 \pi i)^3} \oint \oint \oint \sum_{g_4|h} \frac{\chi_2(h/g_4) u^{2\deg(g_4 \ell_2)}}{|g_4|} \\
          & \times \frac{\mathcal{L} \Big( \frac{x}{q},\ov{\chi_2}\Big) }  {\mathcal{L} \Big( \frac{xyu^2}{q^2},\ov{\chi_2} \Big) (1-u)^2 (1-x)(1-y) (xy)^{[\frac{d-1}{2}]-\deg(g_4)+1}u^{n+1}}  du dx dy,
      \end{align*}
      where $g=Bg_1$ and $g_1=Ag_4$
      where we are integrating along circles such that $|u|=|x|=|y|=q^{-\epsilon}$. In the integral over $x$ we shift the contour to $|x|=q$ and encounter a pole at $x=1$. The integral over the new contour will be bounded by $q^{n-d/2-\deg(\ell_2)} |h \ell_2|^{\epsilon}$. 
      
      It follows that
      \begin{align*}
          M_2 &=  \frac{q^n L(1,\ov{\chi_2})}{|\ell_2|}  \frac{1}{(2 \pi i)^2} \oint  \oint \sum_{g_4|h} \frac{\chi_2(h/g_4) u^{2\deg(g_4 \ell_2)}}{|g_4|} \frac{du dy}{\mathcal{L} \Big( \frac{yu^2}{q^2},\ov{\chi_2} \Big) (1-u)^2 (1-y) y^{[(d-1)/2]-\deg(g_4)+1}u^{n+1}} \\
          &+ O \Big( q^{n-d/2-\deg(\ell_2)} |h \ell_2|^{\epsilon}\Big).
      \end{align*}
      In the integral over $y$ we shift the contour of integration to $|y|=q$, and we encounter the pole at $y=1$. The integral over the new contour will be bounded by $q^{n-d/2-\deg(\ell_2)} |h \ell_2|^{\epsilon}$,
       and we get that
       \begin{align*}
           M_2 &= \frac{q^n L(1,\ov{\chi_2})}{|\ell_2|}  \frac{1}{2 \pi i} \oint  \sum_{g_4|h} \frac{\chi_2(h/g_4) u^{2\deg(g_4 \ell_2)}}{|g_4|} \frac{du }{\mathcal{L} \Big( \frac{u^2}{q^2},\ov{\chi_2} \Big) (1-u)^2 u^{n+1}} \\
          &+ O \Big( q^{n-d/2-\deg(\ell_2)} |h \ell_2|^{\epsilon}\Big).
       \end{align*}
       We further shift the contour to $|u|=q^{1/2-\epsilon}$ and encounter the double pole at $u=1$. The integral over the new contour will be bounded by $q^{n/2+n\epsilon-\deg(\ell_2)}|h \ell_2|^{\epsilon}$. Computing the residue at $u=1$ we get that 
       \begin{align*}
           M_2 &= \frac{q^n L(1,\ov{\chi_2})}{|\ell_2|L(2,\ov{\chi_2})}  \sum_{\substack{g_4|h \\ \deg(g_4) \leqs n/2-\deg(\ell_2)}} \frac{\chi_2(h/g_4) }{|g_4|} \Big(n+1-2\deg(g_4 \ell_2) - \frac{2L'(2,\ov{\chi_2})}{(\log q) L(2,\ov{\chi_2})} \Big)\\ &+  O \Big(  q^{n-d/2-\deg(\ell_2)} |h \ell_2|^{\epsilon}+q^{n/2+n\epsilon-\deg(\ell_2)}|h \ell_2|^{\epsilon} \Big).
       \end{align*}}
       
\kommentar{Left to do:
\begin{enumerate}
\item \st{Bound the error terms (see the previous file as well). Probably focus on the ET for $d$ convoluted with $d$ (where we use the untwisted version of LS), and one example where we use just the Weil bound.}
\item Check main terms.
\item \st{Rewrite the main terms in a more coherent way?}
\item \st{Figure out when we have exact formulas (i.e., we have exact formulas for $d d$, in certain ranges for $h$. Does the same hold for $r r$?}
\item Write down the introduction, with precise statements of the theorems etc.
    \end{enumerate}
}

\section{Bounding the error terms}

   Combining Remarks \ref{rmk1}, \ref{rmk2}, \ref{rmk3}, \ref{rmk4}, \ref{rmk4.5}, \ref{rmk5}, \ref{rmk6}, and  \ref{rmk7}, we see that when $\deg(h) \leqs n$ and $\deg(\ell_1)<2$ and $2 \deg(\ell_2)+\deg(\ell_1)<4$, the various error terms given by \eqref{e11}, \eqref{e12}, \eqref{e21}, \eqref{e22}, \eqref{e2}, \eqref{e1}, \eqref{e2d}, \eqref{e11again}, \eqref{e12again}, \eqref{e1again} vanish and the only error terms in the asymptotic formulas are those in the main term computations (if any).  In particular, if $(\deg(\ell_1), \deg(\ell_2)) \in \{ (0,0), (0,1), (1,0) ,(1,1)\}$ and $\deg(h) \leqs n$, then the error terms mentioned above vanish.

   In fact, we can say a little more. Namely, we obtain exact formulas in the following scenarios as well. Let $m=n+a$ for $a\geqs 0$, where we recall that $m=\deg(h)$. Then the error terms $E_{ij}$ and $E_i$ for $i=1,2$ and $j=1,2$ vanish in the following cases:
     $$ (a,\deg(\ell_1),\deg(\ell_2)) \in 
\begin{cases}
    (0,0,0)&  \\
    (0,0,1)  &  \\
    (0,1,0) &    \\
    (0,2,0) & \mbox{ and } n\equiv 1 \pmod 2 \\
   (0,1,1)   & \mbox{ and } n \equiv 0 \pmod 2 \\
    (1,0,0)   &  \\
    (1,1,0) & \mbox{ and } n \equiv 0 \pmod 2  \\
    (1,0,1) & \mbox{ and } n \equiv 1 \pmod 2  \\
    (2,0,0) & \mbox{ and } n \equiv 1 \pmod 2.
    \end{cases}$$
      
    Now we focus on bounding the various error terms when we are not in one of the scenarios described above. We will only bound a few of them, since they are all similar. We first bound the terms $E_{11}$ and $E_1$, given in \eqref{e11} and \eqref{e1again} (they are very similar, the only difference being that in the latter case, $\chi_1=\chi_2=1$.)
We interchange the sums over $g_1$ and $g$ writing $g=g_1g_2$ and we have 
\begin{align*}
    E_{11} &= \frac{q^nG(\chi_1)}{|\ell_1|}  \sum_{g_1\in \mathcal{M}_{\leqs [d/2]}} \frac{ \chi_1 \chi_2(g_1)}{|g_1|^2} \sum_{g_2 \in \mathcal{M}_{\leqs [d/2]-\deg(g_1)}} \frac{\chi_2(g_2)}{|g_2|}\\& \times \sum_{\lambda \in \Fq^\times} \sum_{\eta_1,\eta_2 \in \{0,1\}}  \sum_{0\leqs k \leqs \mu } b_{\mu-k}(\ov \chi_1) \sum_{f \in \mathcal{M}_k} r_{\ov \chi_1}(f) S(h,\lambda f; g_1),
    \end{align*}
    where $\mu= 2\deg(g_1) + \deg(\ell_1)-n-2$ and we 
    recall that $S(a,b;c)$ is the function field Kloosterman sum given by \eqref{def:S_over_FqT}.
    We further write 
    \begin{align*}
        E_{11} &= \frac{q^nG(\chi_1)}{|\ell_1|}  \sum_{j=0}^{[d/2]} q^{-2j}  \sum_{g_2 \in \mathcal{M}_{\leqs [d/2]-j}} \frac{\chi_2(g_2)}{|g_2|} \sum_{\lambda \in \Fq^\times}\sum_{\eta_1,\eta_2 \in \{0,1\}}  \\
    & \times \sum_{k=0}^{ 2j + \deg(\ell_1)-n-2} b_{2\deg(g_1)+\deg(\ell_1)-n-2-k}(\ov \chi_1) \sum_{f \in \mathcal{M}_k} r_{\ov \chi_1}(f) \sum_{g_1 \in \mathcal{M}_j} \chi_1 \chi_2(g_1) S(h,\lambda f; g_1).
    \end{align*}
    In the case when $\chi_1 \chi_2 = 1$ (namely, when $\ell_1=\ell_2=1$) 
    we use the Linnik--Selberg theorem \eqref{linnik-selberg} and we have that
    $$\sum_{g_1 \in \mathcal{M}_j} S(h,\lambda f ; g_1) \ll q^{j+\epsilon j} |fh|^{\epsilon}.$$
    Trivially bounding the other sums, we get that in this case,
    \begin{equation}
    \label{e11error}
        E_{11} \ll q^{d/2+\epsilon d}.
        \end{equation}
    If we are not in the case mentioned above, then we use the Weil bound \eqref{weil} on the sum over $g_1$ and we have that
    $$ |S(h,\lambda f;g_1)| \leqs q^{\deg(g_1)/2+\deg( (f,h,g_1))/2} 2^{\omega(g_1)}.$$
    Let $A=(f,h)$. Then 
    \begin{align*}
        \sum_{g_1 \in \mathcal{M}_j} 2^{\omega(g_1)} q^{\deg(g_1)/2+\deg((A,g_1))/2} \ll q^{j/2+\epsilon j} \sum_{R|A} \sum_{\substack{g_1 \in \mathcal{M}_j \\ (g_1,A)=R}} 1 \ll q^{3j/2+\epsilon j} |A|^{\epsilon} \ll q^{3j/2+\epsilon j+\epsilon d}.
    \end{align*}
    Trivially bounding all the other sums, we get that
    \begin{equation} \label{ellerror-2}
        E_{11} \ll q^{3d/4+\deg(\ell_1)/2+\epsilon d}.
        \end{equation}
    Now we focus on bounding $E_{21}$, given in \eqref{e21} (in the case $\ell_1 \neq \ell_2)$.) We rewrite it as 
    \begin{align}
E_{21} &= \frac{q^n  \chi_1(\ell_2) G(\chi_1) G(\chi_2)}{|\ell_1 \ell_2^2|} \sum_{g_1 \in \mathcal{M}_{\leqs [(d-1)/2]}} \frac{\chi_1(g_1)}{|g_1|^2} \sum_{g_2 \in \mathcal{M}_{\leqs [(d-1)/2]-\deg(g_1)}} \frac{\ov{\chi_2}(g_2)}{|g_2|} \nonumber \\
& \times \copsum_{b_1 \pmod{g_1\ell_2}} \ov{\chi_2}(-b_1) e\Big(\frac{b_1h}{g_1\ell_2}\Big) \sum_{\lambda \in \Fq^\times} \sum_{\eta_1,\eta_2 \in \{0,1\}} \sum_{0\leqs k \leqs \mu} b_{\mu-k}(\ov{\chi_1}) \sum_{f \in \mathcal{M}_k} r_{\ov \chi_1}(f) e \Big(  \frac{\lambda \ov{b_1} f}{g_1 \ell_2}\Big), \nonumber
    \end{align} 
    where $\mu= 2 \deg(g_1 \ell_2)+\deg(\ell_1)-n-2$.
    Now we rewrite
    $$ G(\chi_2) \ov{\chi_2}(-b_1) = \sum_{a \pmod{\ell_2}} \chi_2(a) e \Big( - \frac{ab_1}{\ell_2}\Big),$$ so we have that
    \begin{align*}
        E_{21} &= \frac{q^n \chi_1(\ell_2)G(\chi_1) }{|\ell_1 \ell_2^2|} 
        \sum_{j=0}^{[(d-1)/2]} q^{-2j}\sum_{g_2 \in \mathcal{M}_{\leqs [(d-1)/2]-j}} \frac{\ov{\chi_2}(g_2)}{|g_2|} \sum_{\lambda \in \Fq^\times} \sum_{\eta_1,\eta_2 \in \{0,1\}}\\
& \times  \sum_{0\leqs k \leqs \mu} b_{\mu-k}(\ov{\chi_1}) \sum_{f \in \mathcal{M}_k} r_{\ov \chi_1}(f) \sum_{a \pmod{\ell_2}}\chi_2(a)\sum_{g_1 \in \mathcal{M}_j} \chi_1(g_1)  S(h-ag_1,\lambda f; g_1 \ell_2). \nonumber
    \end{align*}
Now we use the Weil bound \eqref{weil} on the Kloosterman sum and we have that 
\begin{align*}
|S(h-ag_1,\lambda f;g_1 \ell_2)|&\leqs q^{\deg(g_1 \ell_2)/2+\deg((h-ag_1,f,g_1\ell_2))/2} 2^{\omega(g_1 \ell_2)} \\ &\ll q^{\deg(g_1 \ell_2)/2+\deg((f,g_1\ell_2))/2} |g_1 \ell_2|^{\epsilon}. \end{align*}
It follows that introducing the sum over $f$ and trivially bounding everything else,
\begin{multline*}
  \sum_{f \in \mathcal{M}_k}  r_{\ov \chi_1}(f)\sum_{g_1 \in \mathcal{M}_j} \chi_1(g_1)  S(h-ag_1,\lambda f; g_1 \ell_2) \\  \ll |\ell_2|^{\epsilon} q^{\deg(\ell_2)/2+j/2+\epsilon j} \sum_{\substack{f \in \mathcal{M}_k \\ \ell_2|f}} \sqrt{|\ell_2|} \sum_{g_1 \in \mathcal{M}_j} q^{\deg((f/\ell_2,g_1))/2} \\
  +|\ell_2|^{\epsilon} q^{\deg(\ell_2)/2+j/2+\epsilon j} \sum_{\substack{f \in \mathcal{M}_k \\ \ell_2 \nmid f}} \sum_{g_1 \in \mathcal{M}_j} q^{\deg((f,g_1))/2} \ll q^{3j/2+\epsilon j+\deg(\ell_2)/2} |\ell_2 |^{\epsilon} q^{k(1+\epsilon)} .
\end{multline*}

We trivially bound all the remaining sums, and we get that
\begin{equation}
      \label{error}
      E_{21} \ll q^{3d/4+\epsilon d+\deg(\ell_1)/2+3\deg(\ell_2)/2}.
      \end{equation}

\section{Proof of the main theorems}
Finally, we are ready to prove Theorems \ref{th1}, \ref{dd}, \ref{th3}, \ref{th4}, \ref{th5}.

Using Remark \ref{remarkell} and equations \eqref{m11_expression}, \eqref{m2l}, \eqref{ellerror-2} finishes the proof of Theorem \ref{th1}. 

Now using equations \eqref{maindd}, \eqref{e11error} finishes the proof of Theorem \ref{dd}.

Using equations \eqref{m11_expression}, \eqref{m12_expression}, \eqref{m21_expression}, \eqref{m22_expression}, and \eqref{error} finishes the proof of Theorem \ref{th3}.

Using \eqref{m1_1chi},\eqref{m2_1chi}, and \eqref{error} finishes the proof of Theorem \ref{th4}. 

Finally, combining \eqref{m11_chi1}, \eqref{m12_chi1}, \eqref{m21_chi1}, \eqref{m22_chi1}, \eqref{error} finishes the proof of Theorem \ref{th5}.

To deduce Corollary~\ref{thm:sumsof2squares}, we simply set $\chi_1 = \chi_2 = \chi$ in Theorem~\ref{th1} and note that the quadratic character modulo $T$ is an odd character. In that case, we have $\chi(-1)G(\chi)^2G(\chi^2) = -q$. 
\small
\bibliography{divcorr}

\begin{thebibliography}{10}

\bibitem{ABSR}
J.~C. Andrade, L.~Bary-Soroker, and Z.~Rudnick.
\newblock Shifted convolution and the {T}itchmarsh divisor problem over
  {$\mathbb{F}_q[t]$}.
\newblock {\em Philos. Trans. Roy. Soc. A}, 373(2040):20140308, 18, 2015.

\bibitem{artin}
Emil Artin.
\newblock {\em The collected papers of {E}mil {A}rtin}.
\newblock Addison-Wesley Publishing Co., Inc., Reading, Mass.-London, 1965.
\newblock Edited by Serge Lang and John T. Tate.

\bibitem{atkinson}
F.~V. Atkinson.
\newblock The mean value of the zeta-function on the critical line.
\newblock {\em Proc. London Math. Soc. (2)}, 47:174--200, 1941.

\bibitem{bagshaw}
Christian Bagshaw.
\newblock Bilinear forms with {K}loosterman and {G}auss sums in function
  fields.
\newblock {\em Finite Fields Appl.}, 94:Paper No. 102356, 42, 2024.

\bibitem{BaBSF}
Efrat Bank, Lior Bary-Soroker, and Arno Fehm.
\newblock Sums of two squares in short intervals in polynomial rings over
  finite fields.
\newblock {\em Amer. J. Math.}, 140(4):1113--1131, 2018.

\bibitem{Bary-Soroker-Fehm}
Lior Bary-Soroker and Arno Fehm.
\newblock Correlations of sums of two squares and other arithmetic functions in
  function fields.
\newblock {\em Int. Math. Res. Not. IMRN}, (14):4469--4515, 2019.

\bibitem{B-SGKS}
Lior Bary-Soroker, Ofir Gorodetsky, Taelin Karidi, and Will Sawin.
\newblock Chebotarev density theorem in short intervals for extensions of
  {$\mathbb F_q(T)$}.
\newblock {\em Trans. Amer. Math. Soc.}, 373(1):597--628, 2020.

\bibitem{FFanalogLandau}
Lior Bary-Soroker, Yotam Smilansky, and Adva Wolf.
\newblock On the function field analogue of {L}andau's theorem on sums of
  squares.
\newblock {\em Finite Fields Appl.}, 39:195--215, 2016.

\bibitem{cps}
James~W. Cogdell and Ilya Piatetski-Shapiro.
\newblock {\em The arithmetic and spectral analysis of {P}oincar\'e{} series},
  volume~13 of {\em Perspectives in Mathematics}.
\newblock Academic Press, Inc., Boston, MA, 1990.

\bibitem{conreylevinson}
J.~B. Conrey.
\newblock More than two fifths of the zeros of the {R}iemann zeta function are
  on the critical line.
\newblock {\em J. Reine Angew. Math.}, 399:1--26, 1989.

\bibitem{davenport}
Harold Davenport.
\newblock {\em Multiplicative number theory}, volume~74 of {\em Graduate Texts
  in Mathematics}.
\newblock Springer-Verlag, New York-Berlin, second edition, 1980.
\newblock Revised by Hugh L. Montgomery.

\bibitem{DFL}
Chantal David, Alexandra Florea, and Matilde Lal\'in.
\newblock The mean values of cubic {$L$}-functions over function fields.
\newblock {\em Algebra Number Theory}, 16(5):1259--1326, 2022.

\bibitem{deshouillersiwaniec-div}
J.-M. Deshouillers and H.~Iwaniec.
\newblock An additive divisor problem.
\newblock {\em J. London Math. Soc. (2)}, 26(1):1--14, 1982.

\bibitem{deshouillersiwaniec-main}
J.-M. Deshouillers and H.~Iwaniec.
\newblock Kloosterman sums and {F}ourier coefficients of cusp forms.
\newblock {\em Invent. Math.}, 70(2):219--288, 1982/83.

\bibitem{OGestermann}
T.~Estermann.
\newblock On the {R}epresentations of a {N}umber as the {S}um of {T}wo
  {P}roducts.
\newblock {\em Proc. London Math. Soc. (2)}, 31(2):123--133, 1930.

\bibitem{estermann}
Theodor Estermann.
\newblock \"{U}ber die {D}arstellungen einer {Z}ahl als {D}ifferenz von zwei
  {P}rodukten.
\newblock {\em J. Reine Angew. Math.}, 164:173--182, 1931.

\bibitem{fazzari}
Alessandro Fazzari.
\newblock A {V}oronoi summation formula for the shifted triple divisor
  function.
\newblock {\em Ramanujan J.}, 63(4):1179--1198, 2024.

\bibitem{gorothesis}
Ofir Gorodetsky.
\newblock {\em The variance of sums of arithmetic functions}.
\newblock PhD thesis, School of Mathematical Sciences, Tel Aviv University,
  2021.

\bibitem{hayes}
D.~R. Hayes.
\newblock The expression of a polynomial as a sum of three irreducibles.
\newblock {\em Acta Arith.}, 11:461--488, 1966.

\bibitem{heathbrownfourth}
D.~R. Heath-Brown.
\newblock The fourth power moment of the {R}iemann zeta function.
\newblock {\em Proc. London Math. Soc. (3)}, 38(3):385--422, 1979.

\bibitem{ingham1927div}
A.~E. Ingham.
\newblock Some {A}symptotic {F}ormulae in the {T}heory of {N}umbers.
\newblock {\em J. London Math. Soc.}, 2(3):202--208, 1927.

\bibitem{iwanieckowalski}
Henryk Iwaniec and Emmanuel Kowalski.
\newblock {\em Analytic number theory}, volume~53 of {\em American Mathematical
  Society Colloquium Publications}.
\newblock American Mathematical Society, Providence, RI, 2004.

\bibitem{meurman}
Tom Meurman.
\newblock On the binary additive divisor problem.
\newblock In {\em Number theory ({T}urku, 1999)}, pages 223--246. de Gruyter,
  Berlin, 2001.

\bibitem{motohashidiv}
Yoichi Motohashi.
\newblock The binary additive divisor problem.
\newblock {\em Ann. Sci. \'Ecole Norm. Sup. (4)}, 27(5):529--572, 1994.

\bibitem{muller}
Wolfgang M\"uller.
\newblock On the asymptotic behaviour of the ideal counting function in
  quadratic number fields.
\newblock {\em Monatsh. Math.}, 108(4):301--323, 1989.

\bibitem{roquette}
Peter Roquette.
\newblock {\em The {R}iemann hypothesis in characteristic {$p$} in historical
  perspective}, volume 2222 of {\em Lecture Notes in Mathematics}.
\newblock Springer, Cham, 2018.
\newblock History of Mathematics Subseries.

\bibitem{Sardari-Zargar}
Naser~T. Sardari and Masoud Zargar.
\newblock Ramanujan graphs and exponential sums over function fields.
\newblock {\em J. Number Theory}, 217:44--77, 2020.

\bibitem{serre}
Jean-Pierre Serre.
\newblock {\em Local fields}, volume~67 of {\em Graduate Texts in Mathematics}.
\newblock Springer-Verlag, New York-Berlin, 1979.
\newblock Translated from the French by Marvin Jay Greenberg.

\bibitem{topad3}
Berke Topacogullari.
\newblock The shifted convolution of divisor functions.
\newblock {\em Q. J. Math.}, 67(2):331--363, 2016.

\bibitem{topadk}
Berke Topacogullari.
\newblock The shifted convolution of generalized divisor functions.
\newblock {\em Int. Math. Res. Not. IMRN}, (24):7681--7724, 2018.

\bibitem{topaindiv}
Berke Topacogullari.
\newblock The fourth moment of individual {D}irichlet {$L$}-functions on the
  critical line.
\newblock {\em Math. Z.}, 298(1-2):577--624, 2021.

\bibitem{woothesis}
Katharine Woo.
\newblock An exponential sum involving the divisor function for
  {$\mathbb{F}_q[T]$}.
\newblock Undergraduate thesis, Stanford University, 2023.

\bibitem{yiasemides}
Michael Yiasemides.
\newblock The variance and correlations of the divisor function in {$\mathbb
  {F}_q [T]$}, and {H}ankel matrices.
\newblock {\em Res. Math. Sci.}, 11(1):Paper No. 11, 70, 2024.

\end{thebibliography}
\bibliographystyle{plain}

\end{document}